\documentclass[11pt]{amsart}
\usepackage[bitstream-charter]{mathdesign}
\usepackage{geometry}\geometry{margin=1.15 in} 

\usepackage{amsmath}
\usepackage{amsthm, amssymb, graphicx,bm}
\usepackage[mathcal]{eucal} 

\usepackage[pdftex,bookmarks=true]{hyperref}
\usepackage{color}
\usepackage{comment}
\usepackage{tikz}

%
\newtheorem{theorem}{Theorem}
\newtheorem*{theorem*}{Theorem}

\newtheorem{corollary}{Corollary}
\newtheorem*{corollary*}{Corollary}
\newtheorem{definition}{Definition}
\newtheorem*{definition*}{Definition}
\newtheorem{lemma}{Lemma}
\newtheorem*{lemma*}{Lemma}

\newtheorem{proposition}{Proposition}
\newtheorem*{proposition*}{Proposition}

\numberwithin{equation}{section}
\newtheorem{remark}{Remark}

\newtheorem*{condition*}{Condition}
\newtheorem{claim}{Claim}

\def\supp{{\text{supp }}}
\def\C{{\mathbb C}}
\def\R{{\mathbb R}}

\def\E{{\mathbb E}}
\def\Nat{{\mathbb N}}

\def\<{{\langle}}
\def\>{{\rangle}}

\def\s{{\mathcal{S}}} 
\def\A{{\mathbb A}}
\def\Z{{\mathbb Z}}
\def\L{{\mathcal L}}
 
\title[Embeddings from weighted $L^p$ into outer measure spaces]{Generalized Carleson Embeddings  into Weighted Outer Measure Spaces}

\author{Yen Do}
\author{Mark Lewers}
\address{Department of Mathematics, The University of Virginia, Charlottesville, VA 22904-4137}
\email{yendo@virginia.edu}
\email{mel3jp@virginia.edu}

\subjclass[2010]{42B20}
\thanks{Y.D. partially supported by NSF grant DMS-1800855.}
\date{\today}

\setcounter{tocdepth}{1}
\setlength{\parindent}{1.5em}

\begin{document}

\begin{abstract} 
We prove generalized Carleson embeddings for the continuous wave packet transform from $L^p(\R,w)$ into an outer $L^p$ space over $\R \times \R \times (0,\infty)$ for $2< p < \infty$ and weight $w \in \A_{p/2}$. This work is  a weighted extension of the corresponding Lebesgue result in \cite{dt2015} and generalizes a similar result in \cite{dl2012sm}. The proof in this article relies on $L^2$ restriction estimates for the wave packet transform  which are geometric and may be of  independent interest. 
\end{abstract}

\maketitle

\section{Introduction}

Lennart Carleson's influential paper \cite{carleson1966} in 1966 resolved Lusin's Conjecture by proving the Fourier series of a function $f \in L^2[0,1]$ converges almost everywhere; see also the work of Hunt \cite{hunt1968}. The techniques used in Carleson's proof, now referred to as \textit{time-frequency analysis}, have since played an important role in analysis and serve as a tool in proving $L^p$ estimates on modulation invariant integral operators. We highlight for example Fefferman's proof of Lusin's Conjecture in \cite{fefferman1973} along with  Lacey and Thiele's use of time-frequency analysis in their work on the bilinear Hilbert transform \cite{lt1997,lt1999} and the Carleson operator \cite{lt2000}.

The methods of time-frequency analysis usually pass the analysis on a multilinear form $\Lambda$ to a model sum
    \begin{equation}\label{e.modelsumP}
    \Lambda(f_1,...,f_n) \sim \sum_{s} c_s(\Lambda) \cdot a_{s,1}(f_1) \cdots a_{s,n}(f_n) 
    \end{equation}
indexed over a discrete collection of rectangles $s$ in the phase plane. The terms within each summand, localized to $s$, are either dependent on the form $\Lambda$ or one of the input functions $f_j$. From there, the rectangles are grouped into specified collections on which the desired estimates are obtainable. As shown by the first author and Thiele in \cite{dt2015}, this procedure follows an  outer measure framework which  focuses on two main steps in proving $L^p$ estimates for multilinear forms. The first step is to estimate the multilinear form $\Lambda$ by applying H\"older's inequality in the context of outer measures,
    \begin{equation}\label{e.outerHolder_abstract}
     |\Lambda(f_1,...,f_n)| \le C\prod_{j=1}^n \big\|F_j(f_j)\big\|_{\L^{p_j}(X,\sigma,\s_j)}.  
    \end{equation}
Here, $\L^{p_j}(X,\sigma,\s_j)$ is an \textit{outer $L^{p_j}$ space} constructed over a suitable outer measure space $(X,\sigma,\s_j)$.  The formulation of outer $L^p$ spaces will be discussed in  Section~\ref{s.OuterLp}. The operators $F_j$ are akin to the $a_{s,j}$ in \eqref{e.modelsumP} and represent a suitable projection of $\Lambda$ over  $X$.  The final steps are then to establish outer measure $L^p$ embeddings on each operator $F_j$ in the form
    \begin{equation}\label{e.CarlesonEmbedding_abstract}
         \big\| F_j(f_j) \big\|_{\L^{p_j}(X,\sigma,\s_j)} \le C(F_j,p_j) \| f_j \|_{L^{p_j}(\R)} .
    \end{equation}
Estimates such as \eqref{e.CarlesonEmbedding_abstract} are referred to, see \cite{dt2015,dpo2018}, as \textit{generalized Carleson embeddings}.

What the outer measure framework reveals is that the crux in establishing inequalities on modulation invariant operators is passed to proving Carleson embeddings of form \eqref{e.CarlesonEmbedding_abstract}. This is seen in several recent articles which focus on solving certain Carleson embeddings in order to obtain $L^p$ estimates for specific operators. In the original article \cite{dt2015} which introduced the outer measure framework, the key result in reproving  $L^p$ estimates on the bilinear Hilbert transform with the same restricted range as \cite{lt1997} is a Carleson embedding of the wave packet transform, see \eqref{e.3DPoisson}.  Di Plinio and Ou in \cite{dpo2018} later recovered  $L^p$ estimates for the bilinear Hilbert transform in the full range of \cite{lt1999} by proving a localized  Carleson embedding for the same wave packet transform; here localized is in the sense of \eqref{e.PoissonLocal}. We also mention the work by Uraltsev \cite{uraltsev2016} in reproving $L^p$ estimates for the variational Carleson operator, first obtained in \cite{osttw2012}, which relies on Carleson embeddings for a modified wave packet transform in addition to the wave packet transform \eqref{e.3DPoisson}.  Note the Carleson embedding results stated in this paragraph are  done with respect to functions $f$ in Lebesgue $L^p$ and embeddings into a "non-weighted" outer measure space.

The purpose of this paper is to explore the outer measure framework of time-frequency analysis in the context of weighted inequalities. Specifically, we seek to understand generalized Carleson embeddings 
    $$ \big\| F(f) \big\|_{\L^{p}(X,\sigma^w,\s^w)} \le C(p,w) \|f\|_{L^{p}(\R,w)} $$
where $w$ is a weight on $\R$ and $(X,\sigma^w,\s^w)$ is an outer measure space dependent on $w$.  The motivation is in part  due to the recent progress in identifying weighted $L^p$ estimates in harmonic analysis. Using a weighted time-frequency analysis based on the model sum \eqref{e.modelsumP}, the first author with Lacey \cite{dl2012sm,dl2012jfa} obtained novel weighted estimates for the variational Carleson operator and Walsh counterpart. Part of the analysis in the series focused on inequalities in traditional time-frequency analysis analog to a Carleson embedding \eqref{e.CarlesonEmbedding_abstract} for weighted $L^p$ functions. We are interested in understanding the embeddings in the outer measure framework. 

It is worth mentioning there are suitable alternatives for obtaining weighted $L^p$ estimates in time-frequency analysis which have been recently explored. The application of sparse domination techniques for instance has seen success with the highlight being the remarkable find by Culiuc, Di Plinio, and Ou \cite{cdpo2018} in determining weighted estimates for the bilinear Hilbert transform, the first of its kind\footnote{Xiaochun Li \cite{li} has some unpublished results about weighted estimates for the bilinear Hilbert transform.}. We point out more recent work with weighted estimates for the bilinear Hilbert transform and similar operators  by Cruz-Uribe and Martell \cite{cum2018}, and Benea and Muscalu \cite{bm2017,bm2018}.  Note that sparse domination has also been used to obtain  weighted norm inequalities for the variational Carleson operator  \cite{dpdu2018} which are an improvement of \cite{dl2012sm}. It is questionable however if sparse domination can be used to establish weighted norm estimates for operators with less symmetry  such as the truncated bilinear Hilbert transform  \cite{dop2013} or the  biest operator \cite{mtt2004ma1, mtt2004ma2} whose weighted results are unknown.

\subsection{Continuous Wave Packet Transform and Main Result}\label{subs.OuterLp}
The space we work over is upper 3-space $X =  \R \times \R \times \R_+$ whose coordinates are viewed as parameterizations of symmetries on the class of modulation invariant integral operators.  The primary  outer $L^p$ embedding map of interest in this work is the  \textit{wavelet projection operator} of a function $f: \R \to \C$ into upper 3-space
\begin{equation}\label{e.3DPoisson}
P (f)(y,\eta,t) := f * \phi_{\eta,t} (y) = \int_\R f(x) e^{i\eta (y-x)} \frac{1}{t} \phi\Big(\frac{y-x}{t}\Big) \, dx, \qquad (y,\eta,t) \in X 
\end{equation}
where  $\phi_{\eta,t}(y)$  is a modulated wave function of a Schwartz function $\phi$ on $\R$ with compact frequency support. We also refer to the formulation $P(f)(y,\eta,t) = \<f,\phi_{y,\eta,t}\>$ where
\begin{equation}\label{e.L1wavepacket}
\phi_{y,\eta,t}(x) = e^{-i\eta(y-x)} \frac 1 {t} \overline{\phi \Big(\frac{y-x}t \Big)}
\end{equation}
is the \textit{wave packet} of $\phi$ at $(y,\eta,t) \in X$, here as usual $\<f,g\>=\int f \overline{g}$. In this regard, $P(f)$ is also referred to as the \textit{continuous wave packet transform} of $f$. 

The wave packet transform \eqref{e.3DPoisson} serves as a projection of modulation invariant operators in upper 3-space. As shown in \cite{dt2015}, the wave packet representation of the bilinear Hilbert transform in $X$ is a linear combination of integrals whose integrand is a pointwise product of wave packet transforms. To recover $L^p$ estimates for the bilinear Hilbert transform, the key result in \cite[Theorem 5.1]{dt2015} is that the wave packet transform is a generalized Carleson embedding from $L^p(\R)$ to some outer $L^p$ space on $X$ for $2<p<\infty$, 
	\begin{equation}\label{e.PoissonLeb}
	\|P(f)\|_{\mathcal{L}^p(X,\sigma,\s)} \le C_{p,\phi} \, \|f\|_{L^p(\R)} \, .
	\end{equation}  
It was also stated in \cite{dt2015} and later shown in \cite{uraltsev2016} that $P(f)$ is one of two embedding maps arising from the  decomposition \eqref{e.outerHolder_abstract} in connection with the Carleson and variational Carleson operators. 

The main result of this paper  is an extension of \eqref{e.PoissonLeb} to  $\A_p$ weighted spaces. Given $1<p<\infty$, recall a weight $w: \R \to [0,\infty]$ belongs to the class of  $\A_p$ weights if
	\begin{equation}\label{e:ApWeight}
	[w]_{\A_p} := \sup_{I \subset \R}  \left(\frac{1}{|I|} \int_I w(x) \, dx \right)\left(\frac{1}{|I|} \int_I w(x)^{-\frac{1}{p-1}} \, dx\right)^{p-1} < \infty.
	\end{equation}
	
\begin{theorem} \label{t.wembed}
Fix a  Schwartz function $\phi$ on $\R$ whose Fourier transform is supported in a small neighborhood $(-\delta,\delta)$. Let $2<q<\infty$ and $w \in \A_{q/2}$. Then 
\begin{equation}\label{e.wembed}
\| P(f)\|_{\mathcal L^{q}(X,\sigma^w,\s^w)} \lesssim \|f\|_{L^{q}(\R,w)}
\end{equation}
for all $f \in L^q(\R,w)$ where the implicit constant depends on  $\phi$, $q$, and $[w]_{\A_{q/2}}$.  
\end{theorem}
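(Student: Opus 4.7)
The plan is to follow the outer-measure approach of \cite[Theorem 5.1]{dt2015} and to exploit the $\A_{q/2}$ hypothesis at the level of the $L^2$ component of the size. The proof has three main pieces: an $L^\infty$ endpoint that is insensitive to the weight, a weighted $L^2$ tree restriction estimate that replaces Plancherel, and an outer $L^p$ interpolation that uses $\A_{q/2}$ precisely to convert an $L^{q/2}$-type bound on a maximal average of $|f|^2$ into the desired $L^q(\R,w)$ bound on $f$.

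The $L^\infty$ endpoint $\|P(f)\|_{\L^\infty(X,\sigma^w,\s^w)}\lesssim \|f\|_{L^\infty}$ follows from the pointwise bound $|P(f)(y,\eta,t)|\le\|\phi\|_{L^1}\|f\|_{L^\infty}$ together with the definition of the $L^\infty$ component of $\s^w$, which I would take to be normalized independently of $w$. The central new ingredient is a weighted $L^2$ tree restriction estimate: for each tree $T$ with spatial base interval $I_T$, a bound of the shape
\begin{equation*}
\Big(\int_T |P(f)(y,\eta,t)|^2\, d\mu_T\Big)^{1/2} \lesssim \Big(\inf_{x\in I_T} M(|f|^2)(x)\Big)^{1/2}\cdot \sigma^w(T)^{1/2},
\end{equation*}
with implicit constant depending only on $[w]_{\A_{q/2}}$ and $\phi$. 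This estimate uses the compact frequency support of $\hat\phi$ in $(-\delta,\delta)$ to confine the wave packets $\phi_{y,\eta,t}$ for $(y,\eta,t)\in T$ to a narrow frequency band, and a $TT^*$ computation then compares the $L^2$ norm on $T$ to a weighted average of $|f|^2$ over a dilate of $I_T$.

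With these two tree-level estimates in hand, outer-measure interpolation reduces the embedding to an inequality of the shape $\|P(f)\|_{\L^q(X,\sigma^w,\s^w)}\lesssim \|M(|f|^2)\|_{L^{q/2}(\R,w)}^{1/2}$. The hypothesis $w\in\A_{q/2}$ is exactly the statement that $M$ is bounded on $L^{q/2}(\R,w)$, so $\|M(|f|^2)\|_{L^{q/2}(\R,w)}\lesssim \|f\|_{L^q(\R,w)}^2$, concluding the argument.

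The main obstacle I anticipate is the weighted $L^2$ tree restriction estimate itself. In the unweighted case Plancherel applies cleanly because the frequency localization of the wave packets produces almost-orthogonality at a single scale; in the weighted setting Plancherel is unavailable, and one must carry out the almost-orthogonality argument geometrically while keeping the constants uniform in $[w]_{\A_{q/2}}$ and in the scale of $T$. A secondary challenge is choosing $\s^w$ and $\sigma^w$ so that the outer-measure interpolation step lines up cleanly with the $\A_{q/2}$ boundedness of $M$ on $L^{q/2}(\R,w)$ and yields the full range $2<q<\infty$ with uniform dependence on $[w]_{\A_{q/2}}$.
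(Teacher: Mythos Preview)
Your plan has a genuine gap at the interpolation step, and a smaller but real gap at the $L^\infty$ endpoint.

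\textbf{The $L^\infty$ endpoint is not free.} The size $\s^w$ has two pieces: the supremum over the core $T^b$ \emph{and} the weighted $L^2$ average of $P(f)$ over the lacunary region $T^\ell$. Your pointwise bound $|P(f)(y,\eta,t)|\le \|\phi\|_{L^1}\|f\|_{L^\infty}$ handles the first piece but not the second: plugging a constant bound into $\int_{T^\ell} |P(f)|^2\, w(y-t,y+t)\, dy\, d\eta\, dt/t$ diverges because the $\eta$-integration contributes $\sim t^{-1}$ and the resulting $dt/t^2$ is not integrable near $0$. The paper handles this piece by a near/far decomposition of $f$ together with a genuine weighted Littlewood--Paley square function bound $\|S_T(P(h))\|_{L^q(w)}\lesssim \|h\|_{L^q(w)}$, which already uses $w\in\A_q$.

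\textbf{The single-tree estimate does not give a second endpoint.} Your ``weighted $L^2$ tree restriction estimate'' $\s^w(P(f))(T)\lesssim \big(\inf_{I_T} M(|f|^2)\big)^{1/2}$ is a bound on the \emph{size}, hence purely an $\L^\infty$ statement; it is equivalent in content to the $L^\infty$ endpoint and cannot be interpolated against itself. To run Marcinkiewicz you need a weak-type bound $\mu^w\big(\s^w(P(f))>\lambda\big)\lesssim \lambda^{-q}\|f\|_{L^q(w)}^q$ at some finite $q$, i.e.\ you must \emph{construct} an exceptional set $E$ with controlled outer measure such that the size of $P(f)1_{X\setminus E}$ is $\le \lambda$ on every tent. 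Your estimate suggests taking $E$ to be the region sitting over $G=\{M(|f|^2)>c\lambda^2\}$, but that region is unbounded in the frequency variable $\eta$ and so has \emph{infinite} outer measure in $(X,\sigma^w)$: a triangular strip $\{(y,\eta,t):(y-t,y+t)\subset I\}$ cannot be covered by finitely many tents. This is exactly why the paper cannot proceed as you outline: the exceptional set must be a union of genuine tents $T(x_k,\xi_k,s_k)$, each with a \emph{specific} frequency $\xi_k$, and one must control how many such tents are needed. The paper does this via a two-stage selection algorithm (one for the $\sup$ part of the size, one for the $L^2(w)$ part), shows that the selected tents are well-separated in a precise geometric sense, and then uses the $L^2$ restriction estimates for well-separated collections (Section~\ref{s.wellsep_lebesgue}) inside a sharp maximal function and good-$\lambda$ argument to bound $\sum_k w(I_k)$. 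None of this machinery is present in your outline, and without it the weak-type bound at finite $q$ is simply missing.
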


The details concerning the outer $L^p$ space in \eqref{e.wembed} are postponed to Section~\ref{s.OuterLp}. In the scenario $w=1$ is associated with Lebesgue measure,  the theorem immediately implies the strong embedding result \eqref{e.PoissonLeb} from \cite[Theorem 5.1]{dt2015} as $w=1$ is an $\A_p$ weight for all $p > 1$.

We envision \eqref{e.wembed} can be used akin to the Lebesgue version \eqref{e.PoissonLeb} to establish weighted estimates in time-frequency analysis. The weighted results previously mentioned  which use the outer measure framework are based on embeddings which send Lebesgue $L^p$  functions $f$ into non-weighted outer measure spaces. Developing  a weighted outer measure framework in time-frequency analysis\footnote{We point out work by Thiele, Treil, and Volberg \cite{ttv2015} which uses weighted outer measure spaces in the context of martingale multipliers.} could lead to natural self-contained proofs and potentially be a tool to examine the open problems previously mentioned. We stress that using the weighted outer measure framework in this work to obtain weighted $L^p$ estimates for modulation invariant operators is beyond the scope of the paper.
 
Another question not being addressed in this work is the prospect of a  localized version of \eqref{e.wembed}. While the Carleson embedding of $P(f)$  \eqref{e.PoissonLeb} is not bounded for $1 \le p \le 2$, Di Plinio and Ou \cite[Theorem 1]{dpo2018} showed there is a localized extension for $1<p<2$ in the sense
    \begin{equation}\label{e.PoissonLocal}
    \| P(f) 1_{X \backslash E_f} \|_{\mathcal{L}^q(X,\sigma,\s)} \le C_{p,q} \| f\|_{L^p(\R)} \, , \, \hspace{3em} p' < q \le \infty
    \end{equation}
where $E_f$ is an exceptional set dependent on large $L^p$ averages of $f$. Localized embeddings of form \eqref{e.PoissonLocal} are key ingredients in recent papers concerning $L^p$ estimates for modulation invariant operators, cf.\ \cite{uraltsev2016,dpo2018,cdpo2018,dpdu2018}  but it is open whether a localized version of \eqref{e.wembed} holds; this question is left for further study.

\subsection{Structure of Paper} In Section~\ref{s.OuterLp}, we setup the outer measure space over upper 3-space $X$ and the corresponding outer $L^p$ space which is the setting for Theorem~\ref{t.wembed}. The section concludes with relevant properties for general outer $L^p$ spaces which are needed in the paper.  Section~\ref{s.wellsep_lebesgue} discusses $L^2$ restriction estimates for the wave packet transform in upper 3-space. These estimates are  key to the proof of Theorem~\ref{t.wembed} which is pushed to Section~\ref{s.MainThmproof}.

\subsection{Notation}   
Given a finite interval  $I$ with center $c_I$, denote $aI$ as the interval with center $c_I$ and length $|aI| = a|I|$. For a fixed finite interval $I$, let
\begin{equation}\label{e.localization}
\chi_I(x) = \Big[1+\Big(\frac{|x-c_I|}{|I|}\Big)^{2}\Big]^{-1}.
\end{equation}
Let $\mathscr{S}(\R)$ denote the space of Schwartz functions on $\R$. We write the Fourier transform  of $f \in \mathscr{S}(\R)$ as
	$$ \widehat{f}(\xi) = \int_\R e^{-i \xi x} f(x) \, dx.$$ 
Given a weight $w: \R \to [0,\infty]$, let $w(E) = \int_E w(x) \, dx$  for all Lebesgue measurable sets $E$ on $\R$. When $E=(a,b)$ is an interval on $\R$, we write $w(a,b) = w((a,b))$ for convenience. We denote weighted $L^p$ spaces on $\R$ as $L^p(w)=L^p(\R,w)$ and the norm as $\|f \|_{L^p(w)} = \|f\|_{L^p(\R,w)}$ with similar convention for weak $L^p$ spaces. Finally, for a dyadic grid $\mathcal{D}$, we write the  dyadic (weighted) $L^p$ maximal functions over $\R$ as 
	\[ M_{p,w}(f)(x) = \sup_{\text{dyadic }Q \ni x} \left( \frac{1}{|Q|} \int_Q |f(x)|^p  \, w(x) d  x \right)^{1/p} \]
where $M = M_{1,1}$ is the standard dyadic  maximal function and $M_p = M_{p,1}$ is the dyadic  $L^p$ maximal function.

\section{Outer $L^p$ Spaces}\label{s.OuterLp}

This section sets up the outer $L^p$ space over upper 3-space in Theorem~\ref{t.wembed}. This setup is built upon the outer $L^p$  definitions and concepts formulated in  \cite{dt2015}. For convenience, we record useful properties of outer $L^p$ spaces in Section~\ref{subs.propertiesouterLp}.

\subsection{Outer $L^p$ Spaces over $X$} \label{subs.outerlp}
We work with the outer $L^p$ space associated with outer measure space $(X,\sigma,\s)$ where $X= \R \times \R \times \R_+$ is upper 3 space, $\sigma$ is a pre-measure on $X$ with respect to a distinguished collection of Borel sets $\E$, and $\s$ is a \textit{size}, i.e., a quasi sub-additive averaging map over each collection $E \in \E$. In keeping with the language developed in time-frequency analysis, the first coordinate of $X$ represents \textit{time}, the second coordinate represents \textit{frequency}, and the third coordinate represents \textit{scale}.

\subsubsection{Outer Measure Spaces and 3D Tents}
The distinguished collection of Borel sets in $X$ which our outer measure space is built over is the collection of \textit{3D tents} (or tents for short) in upper 3-space. Fix a triplet $\Theta = (C_1,C_2,b)$ such that $\min(C_1,C_2)> b>0$  where $b$ is a sufficiently small parameter to be used later. For each $(x, \xi, s)\in X$, define the 3D tent 
$$T_{\Theta}(x,\xi,s) := \left\{ (y,\eta,t) \in X \,\, : \,\, t < s , \, |y-x| < s-t , \, -\frac {C_1}t < \eta - \xi < \frac{C_2}t \right\} . $$
A tent $T_\Theta(x,\xi,s)$ is asymmetric in frequency unless $C_1=C_2$. An image of the center component in a generic 3D tent $T_\Theta(x,\xi,s)$ with $C_1\neq C_2$ is shown in Figure~\ref{f.tent1}. 

\begin{figure}
 \begin{center}
 \includegraphics[scale=0.27]{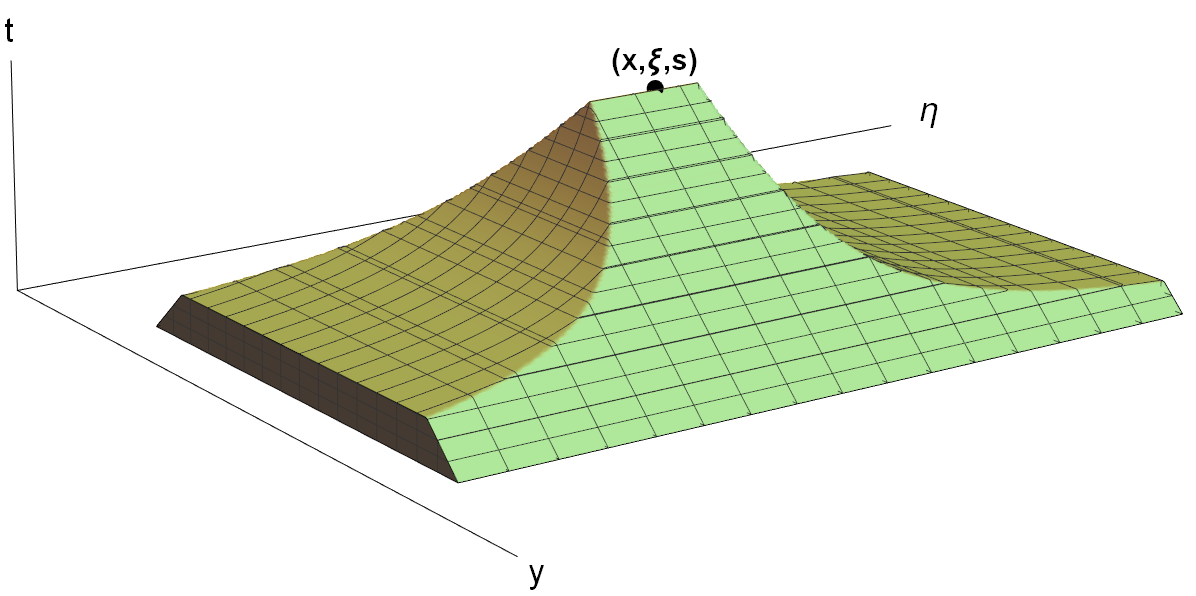}
 \caption{The center component of a generic 3D tent $T_\Theta(x,\xi,s)$ with $C_1 \neq C_2$. Note that the image is only a portion of the whole tent $T_\Theta(x,\xi,s)$ as tents have unbounded support in frequency. }
  \label{f.tent1}
 \end{center}
 \end{figure}
 
We further subdivide a 3D tent $T_\Theta(x,\xi,s)$ into \textit{core} and \textit{lacunary} components. The overlapping or core of the tent $T_\Theta(x,\xi,s)$,  denoted by $T^b_\Theta (x,\xi,s)$,  is defined as
$$T^b_\Theta (x, \xi, s):= \left\{(y,\eta,t) \in T_\Theta (x,\xi,s) \,\, : \,\,  |\eta-\xi|\le bt^{-1} \right\}.$$
The lacunary part of the tent $T_\Theta(x,\xi,s)$, denoted by $T^\ell_\Theta(x,\xi,s)$ is the asymmetric shell which is disjoint from the core,
$$T^\ell_\Theta (x,\xi,s):=T_\Theta (x,\xi,s)\setminus T^b_\Theta (x,\xi,s).$$ 
Figure~\ref{f.tent2} shows two-dimensional projections of a tent which helps distinguish the separation between the core and lacunary parts. The choice in $b< \min(C_1,C_2)$ is to ensure the shells of $T_\Theta$ are nontrivial. In the context of Theorem~\ref{t.wembed}, we set $\delta = 2^{-8} b$ for the frequency support of the kernel $\phi$ in the wave packet transform.

We now consider a pre-measure over the distinguished collection of 3D tents.  Fixing  $\Theta$, let $\E$ be the collection of all tents $T_\Theta$ in $X$. For a fixed weight function $w:\R\to [0,\infty]$, define the pre-measure $\sigma^w: \E \to [0,\infty)$ where 
$$ \sigma^w\big( T_\Theta(x,\xi,s)\big) := w(x-s,x+s) = \int_{\R} 1_{|u-x|<s} w(u) \, du . $$
To extend $\sigma^w$ to an outer measure $\mu^w$ on $X$, define for an arbitrary set $E \subset X$,
$$ \mu^w(E) := \inf \Big\{ \sum_{T_\Theta \in \E'} \sigma^w(T_\Theta) \,\, : \,\, E \subset \bigcup_{T_\Theta \in \E'} T_\Theta \Big\}$$
where the infimum is over all countable sub-collections $\E'$ of $\E$ which covers $E$. It is straightforward to check that $\mu^w(T_\Theta) = \sigma^w(T_\Theta)$ for all tents $T_\Theta \in \E$.

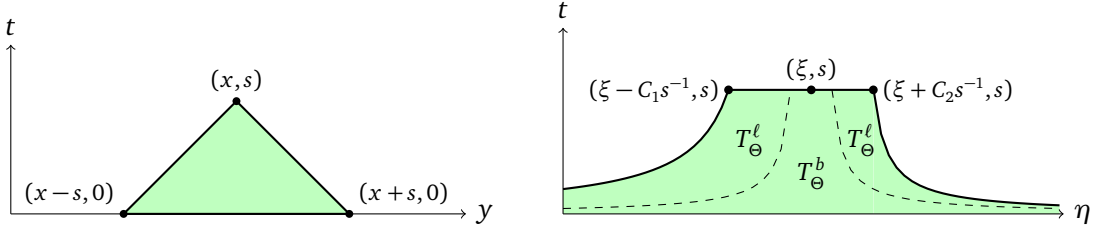
\begin{figure}
\begin{tikzpicture}[scale=0.5]
    \draw[->] (0,0) -- (12.1,0) node[right]{$y$};
    \draw[->] (0,0) -- (0,4.5) node[above]{$t$};
    \filldraw[thick,fill=green!25] (3,0.0) -- (6,3) -- (9,0.0) -- (3,0.0);
    \fill (6,3)  circle[radius=3pt] node[above]{\footnotesize{$(x,s)$}};
    \fill (9,0)  circle[radius=3pt] node[above right]{\footnotesize{$(x+s,0)$}};
    \fill (3,0)  circle[radius=3pt] node[above left ]{\footnotesize{$(x-s,0)$}};
\end{tikzpicture}
\hspace{1em}
\begin{tikzpicture}[scale=0.55]
    \fill[green!25] (4,3) -- (7.5,3)  -- (7.5,0) -- (4,0) -- (4,3);
    \fill [green!25, domain=7.5:12, variable=\x]
      (12, 0.207)
      -- (12, 0)
      -- (7.5, 0)
      -- (7.5, 3)
      -- plot (\x, {1/(\x-7.17)});
    \fill [green!25, domain=0:4, variable=\x]
      (0, 0.6)
      -- plot (\x, {3/(5-\x)})
      -- (4, 3)
      -- (4, 0)
      -- (0, 0);
    \draw [thick] (4,3) -- (7.5,3) ;
    \draw [thick, domain=7.5:12] plot (\x, {1/(\x-7.17)});
    \draw [thick, domain=0:4] plot (\x, {3/(5-\x)});
    \draw [dashed, domain=6.5:12] plot (\x, {0.75/(\x-6.25)}); 
    \draw [dashed, domain=0:5.5] plot (\x, {0.75/(5.75-\x)}); 
    \draw[->] (0,0) -- (12.1,0) node[right]{$\eta$};
    \draw[->] (0,0) -- (0,4.5) node[above]{$t$};
    \fill (6,3)  circle[radius=3pt] node[above]{\footnotesize{$(\xi,s)$}};
    \fill (7.5,3)  circle[radius=3pt] node[right]{\footnotesize{$(\xi + C_2s^{-1},s)$}};
    \fill (4,3)  circle[radius=3pt] node[left ]{\footnotesize{$(\xi - C_1s^{-1},s)$}};
    \draw (6,1)  node{\small{$T^b_\Theta$}};
    \draw (7.2,1.8)  node{\small{$T^\ell_\Theta$}};
    \draw (4.5,1.8)  node{\small{$T^\ell_\Theta$}};
\end{tikzpicture}
\caption{Projections of a tent $T_\Theta(x,\xi,s)$  with $C_1 \neq C_2$. The left image is the time-scale projection and the right image is the frequency-scale projection with the partition between the core $T^b_\Theta$ and lacunary $T^\ell_\Theta$ regions. }
\label{f.tent2}
\end{figure}

It remains to define a non-negative averaging operator called a \textit{size} on the space $\mathcal{B}(X)$ of  Borel-measurable functions over $X$. A size is a map $\s: \mathcal{B}(X) \to [0,\infty]^\E$ such that the following properties hold for all $F,G \in \mathcal{B}(X)$ and all $T_\Theta \in \E$.
    \begin{enumerate}
	\item \, [Monotone] If $|F| \le |G|$, then $\s(F)(T_\Theta) \le \s(G)(T_\Theta)$ .  
	\item \, [Scaling] If $\lambda \in \C$, then $\s (\lambda F)(T_\Theta) = |\lambda| \, \s(F)(T_\Theta).$ 
	\item \, [Quasi Triangle] There exists constant $C=C(\s) \ge 1$ such that
		\begin{equation}\label{e.size_quasitriangle}
		    \s(F + G)(T_\Theta) \le C \Big[ \s(F)(T_\Theta) + \s(G)(T_\Theta)\Big]  \,.
		\end{equation}
			The infimum of all such $C$ is the \textit{quasi-triangle constant of size $\s$}.
    \end{enumerate}
To construct the size in Theorem~\ref{t.wembed}, we first denote $S_{T_\Theta}$ as the continuous square function operator of a Borel function $F \in \mathcal{B}(X)$ restricted to the lacunary part of a fixed tent $T_\Theta$,
$$S_{T_\Theta}(F)(u)=\left(\int_{T^\ell_\Theta(x,\xi,s)} |F(y,\eta,t)|^2 1_{|y-u|<t} dyd\eta \frac{dt}{t} \right)^{1/2}. $$ 
The size $\s^w$ in Theorem~\ref{t.wembed} is then a superposition of an $L^\infty$ norm over the core of a tent and an $L^2(w)$ average norm for the square function $S_{T_\Theta}$. Formally,
$$\s^w(F)(T_\Theta(x,\xi,s)) :=\frac 1{\sqrt{w (x-s,x+s)}} \left\|S_{T_\Theta}(F) \right\|_{L^2(w)} +  \sup_{(y,\eta,t)\in T^b_\Theta(x,\xi,s)} |F(y,\eta,t)| $$
$$=\left(\frac 1{w(x-s,x+s)} \int_{T^\ell_\Theta(x,\xi,s)} |F(y,\eta,t)|^2  w(y-t,y+t)dyd\eta \frac{dt}{t}\right)^{1/2} + \sup_{(y,\eta,t)\in T^b_\Theta(x,\xi,s)} |F(y,\eta,t)|.$$

It is straightforward to check $\s^w$ is a size on $X$ with quasi-triangle constant $1$. The  triplet $(X,\sigma^w,\s^w)$  as defined above is therefore the outer measure space for this paper. Rather than denoting the space with $\mu^w$, we use the pre-measure $\sigma^w$ for it is implicitly in terms of the collection of tents $\E$. As we are working with a fixed $\Theta = (C_1,C_2,b)$,  we drop the $\Theta$ notation out of convenience and write a 3D tent as $T(x,\xi,s) = T_\Theta(x,\xi,s)$.  Be aware that the implicit constant  in Theorem~\ref{t.wembed} is also in terms of $\Theta$.

\subsubsection{Outer $L^p$ Spaces}
We  formulate the outer integrable spaces with respect to the outer measure space $(X,\sigma^w,\s^w)$. Given $\lambda>0$ and $F \in \mathcal{B}(X)$, define the \textit{super level measure} associated with $\mu^w$ by
$$\mu^w\big( \s^w(F) > \lambda\big) := \inf\Big\{ \mu^w(E) \, : \, E \subset X \text{ Borel \, \, s.t. \,  } \sup_{T \in \E} \s^w( F 1_{X \backslash E})(T) \le \lambda \Big\}. $$
For each  $0<p<\infty$ and $F \in \mathcal{B}(X)$, consider the \textit{outer $L^p$ maps}
\begin{align*}
    &\| F \|_{\mathcal{L}^{p}(X,\sigma^w,\s^w)} := \left( \int_0^\infty p \lambda^{p-1} \mu^w\big( \s^w(F) > \lambda\big) \, d\lambda \right)^{1/p}  \\[0.75em]
    &\|F\|_{\mathcal{L}^{p,\infty}(X,\sigma^w,\s^w)} := \sup_{\lambda > 0} \Big(\lambda^p \mu^w\big( \s^w(F)>\lambda \big)  \Big)^{1/p} \\[1em]
    & \|F\|_{\mathcal{L}^{\infty,\infty}(X,\sigma^w,\s^w)} = \|F\|_{\mathcal{L}^\infty(X,\sigma^w,\s^w)} := \sup_{T \in \E} \s^w(F)(T)
\end{align*} 
and set $\L^p(X,\sigma^w,\s^w)$, $\L^{p,\infty}(X,\sigma^w,\s^w)$ as the set of Borel functions whose corresponding outer $L^p$ map is finite. As in classical $L^p$ theory, $\L^{p}(X,\sigma^w,\s^w)$ is contained in $\L^{p,\infty}(X,\sigma^w,\s^w)$.

\subsection{Properties of outer $L^p$ spaces}\label{subs.propertiesouterLp}

We record  useful properties concerning outer $L^p$ spaces and their weak versions. The properties hold for general outer measure spaces so we use an abstract outer measure space $(X, \sigma ,\s)$ (where $X$ is a metric space) and outer $L^p$ space $\mathcal{L}^p(X,\sigma,\s)$.

The following proposition from \cite[Proposition 3.1]{dt2015} shows $\|\cdot\|_{\L^p(X,\sigma,\s)}$ is a quasi seminorm.
\begin{proposition} 
Let $(X,\sigma,\s)$ be an outer measure space. Consider  $F,G \in \mathcal{B}(X)$ and $0 < p \le \infty$. 
\begin{enumerate}
    \item If $|F| \le |G|$ then $\| F \|_{\mathcal{L}^p(X,\sigma,\s)} \le \| G \|_{\mathcal{L}^p(X,\sigma,\s)} $ 
    \item If $\lambda \in \C$, $\| \lambda F\|_{\mathcal{L}^p(X,\sigma,\s)} = \lambda \, \| F \|_{\mathcal{L}^p(X,\sigma,\s)}$ 
    \item Let $C$ be the quasi-triangle inequality of size $\s$. Then
    	\begin{equation}\label{e.quasitri}
    		 \| F + G \|_{\mathcal{L}^p(X,\sigma,\s)} \le  C_p  \Big( \, \| F \|_{\mathcal{L}^p(X,\sigma,\s)} + \| G \|_{\mathcal{L}^p(X,\sigma,\s)} \Big) 
	\end{equation}
    where $C_{p} = \begin{cases} 2^{1/p} C & 0 < p <  1 \\ 2 C & 1 \le  p < \infty \\ C & p = \infty  \end{cases}$ . 
\end{enumerate}
The above properties also hold for the weak space $\mathcal{L}^{p,\infty}(X,\sigma,\s)$. 
\end{proposition}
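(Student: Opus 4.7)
The plan is to reduce everything to properties of the super-level measure $\mu(\s(F) > \lambda)$, since both the $\L^p$ and $\L^{p,\infty}$ norms are built from it via the layer-cake formula and a supremum respectively. Properties (1) and (2) follow immediately: if $|F| \le |G|$ then $|F 1_{X\setminus E}| \le |G 1_{X\setminus E}|$ for any Borel set $E$, so by the monotonicity of $\s$ any competitor in the infimum defining $\mu(\s(G) > \lambda)$ also serves for $F$, giving $\mu(\s(F) > \lambda) \le \mu(\s(G) > \lambda)$ pointwise in $\lambda$. For scaling, the identity $\s(\lambda F) = |\lambda|\s(F)$ yields $\mu(\s(\lambda F) > t) = \mu(\s(F) > t/|\lambda|)$, and a change of variables in $t$ gives the homogeneity (with $|\lambda|$ in place of the $\lambda$ appearing in the statement).

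The heart of the proof lies in (3). The central step is the super-level bound
\begin{equation*}
\mu\big(\s(F+G) > C(\lambda_1 + \lambda_2)\big) \le \mu\big(\s(F) > \lambda_1\big) + \mu\big(\s(G) > \lambda_2\big),
\end{equation*}
valid for all $\lambda_1, \lambda_2 > 0$. To prove this, take near-optimal competitor sets $E_1, E_2$ for the right-hand side. The union $E = E_1 \cup E_2$ is a competitor for the left-hand side: using the quasi-triangle property of $\s$ together with monotonicity and $1_{X\setminus E} \le 1_{X\setminus E_j}$,
\begin{equation*}
\s\big((F+G)1_{X\setminus E}\big)(T) \le C\big[\s(F 1_{X\setminus E_1})(T) + \s(G 1_{X\setminus E_2})(T)\big] \le C(\lambda_1 + \lambda_2),
\end{equation*}
and the subadditivity of the outer measure $\mu$ delivers the bound.

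Setting $\lambda_1 = \lambda_2 = \lambda/(2C)$ and integrating against $p\lambda^{p-1}\,d\lambda$ yields $\|F+G\|_{\L^p}^p \le (2C)^p(\|F\|_{\L^p}^p + \|G\|_{\L^p}^p)$; replacing the integration by $\sup_\lambda \lambda^p$ gives the analogue for $\L^{p,\infty}$. The stated constants then come from elementary Minkowski-type inequalities. For $p \ge 1$, the inequality $(a^p + b^p)^{1/p} \le a + b$ converts the above into the constant $2C$. For $0 < p < 1$, convexity of $s \mapsto s^{1/p}$ (with $1/p > 1$) gives $(a^p + b^p)^{1/p} \le 2^{1/p - 1}(a+b)$, which upgrades $\|F+G\| \le 2C(\|F\|^p + \|G\|^p)^{1/p}$ into the stated $C_p = 2^{1/p}C$. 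The case $p = \infty$ is immediate from the quasi-triangle property applied tent-by-tent.

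The main obstacle is purely book-keeping: recovering exactly the constant $2^{1/p}C$ in the regime $0 < p < 1$ requires the sharper convexity bound for $(a^p + b^p)^{1/p}$, rather than the cruder estimate $(a^p + b^p)^{1/p} \le 2^{1/p}\max(a,b)$ which would produce the inferior constant $2^{1+1/p}C$. Beyond that, the argument is a direct unpacking of the definitions, with the super-level bound above serving as the single engine driving both the strong and weak versions.
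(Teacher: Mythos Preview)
The paper does not supply its own proof of this proposition; it is quoted verbatim from \cite[Proposition~3.1]{dt2015} and stated without argument. Your proof is correct and follows exactly the standard route one finds in that reference: reduce to the super-level inequality $\mu(\s(F+G) > C(\lambda_1+\lambda_2)) \le \mu(\s(F)>\lambda_1) + \mu(\s(G)>\lambda_2)$ via the union of near-optimal competitor sets, then integrate or take suprema and finish with the elementary $(a^p+b^p)^{1/p}$ bounds. There is nothing to compare, and nothing to add.
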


Recall size $\s^w$ has quasi-triangle constant of $1$. As such, both  $\mathcal{L}^{p}(X,\sigma^w,\s^w)$ and $\mathcal{L}^{p,\infty}(X,\sigma^w,\s^w)$  for $p>1$ have a quasi-triangle  constant of $2$.

Note the quasi-triangle inequality \eqref{e.quasitri}  can be generalized to a summation of $n$ functions $F_j$ by
    $$ \big\| \sum_{j=1}^n F_j \big\|_{\mathcal{L}^p(X,\sigma,\s)} \le \sum_{j =1}^n C_p^{j} \| F_j \|_{\mathcal{L}^p(X,\sigma,\s)}$$
with a similar inequality if the summation is with respect to  $\mathcal{L}^{p,\infty}(X,\sigma,\s)$. Assuming the sequence $\|F_j\|_{\L^p(X,\sigma,\s)}$ has sufficient decay when $j \to \infty$, this can be extended to an infinite series. One application is the following  domination property  presented by Uraltsev \cite[Corollary 2.1]{uraltsev2016}.

\begin{proposition}[Dominated Convergence]\label{p.outerDom}
Fix an outer measure space $(X,\sigma,\s)$ and $0 < p \le \infty$. Consider   Borel functions $F$, $F_j \in \mathcal{B}(X)$   satisfying the following properties.
    \begin{enumerate}
        \item $|F| \le \limsup_{j \to \infty} |F_j|$ pointwise on $X$.
        \item There exists $C_{p}' > C_p \ge 1$ where $C_p$ is a quasi-triangle constant for $\mathcal{L}^p(X,\sigma,\s)$  such that
            \begin{equation}\label{eq:DominatingConv_Cond2}
            \sup_{j\ge 1} (C_{p}')^{j} \,  \| F_{j+1}-F_j\|_{\mathcal{L}^p(X,\sigma,\s)} \lesssim \|F_1\|_{\mathcal{L}^p(X,\sigma,\s)}    .
            \end{equation}
    \end{enumerate}
Then $\|F\|_{\mathcal{L}^p(X,\sigma,\s)} \lesssim_{C_p,C_p'} \|F_1\|_{\mathcal{L}^p(X,\sigma,\s)}.$ Moreover, if $\|F_1 \|_{\mathcal{L}^p(X,\sigma,\s)} \lesssim C$ and the upper estimate in \eqref{eq:DominatingConv_Cond2} is replaced by $C$, then $\|F\|_{\L^p(X,\sigma,\s)} \lesssim C$.  A similar result holds in the context of weak outer $L^p$ spaces.
\end{proposition}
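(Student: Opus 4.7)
The plan is to exploit the telescoping identity combined with the generalized (infinite) quasi-triangle inequality noted just before the statement. Expanding $F_j = F_1 + \sum_{k=1}^{j-1}(F_{k+1} - F_k)$ and applying the pointwise triangle inequality yields
\begin{equation*}
|F_j| \le |F_1| + \sum_{k=1}^{\infty} |F_{k+1} - F_k|
\end{equation*}
for every $j \ge 1$. Since the right-hand side is $j$-independent, taking the $\limsup$ on the left and invoking hypothesis (1) gives the pointwise domination $|F| \le |F_1| + \sum_{k=1}^\infty |F_{k+1}-F_k|$ on $X$.

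Next I would apply monotonicity of $\|\cdot\|_{\L^p(X,\sigma,\s)}$ followed by the infinite version of the quasi-triangle inequality from the preceding discussion, arriving at
\begin{equation*}
\|F\|_{\L^p(X,\sigma,\s)} \le C_p \|F_1\|_{\L^p(X,\sigma,\s)} + \sum_{k=1}^\infty C_p^{k+1}\|F_{k+1}-F_k\|_{\L^p(X,\sigma,\s)}.
\end{equation*}
Hypothesis (2) bounds the $k$-th summand by a constant multiple of $C_p^{k+1}(C_p')^{-k}\|F_1\|_{\L^p(X,\sigma,\s)}$, producing a geometric series in the ratio $C_p/C_p' < 1$. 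Summing gives $\|F\|_{\L^p(X,\sigma,\s)} \lesssim_{C_p,C_p'}\|F_1\|_{\L^p(X,\sigma,\s)}$, which is the desired estimate. The variant in which $\|F_1\|_{\L^p}$ is replaced by an arbitrary constant $C$ is handled by running the same argument verbatim, and the weak $\L^{p,\infty}$ version requires only the analogous weak quasi-triangle inequality mentioned after the preceding proposition.

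The main obstacle I anticipate is not conceptual but technical: rigorously justifying the extension of the quasi-triangle inequality from finite to infinite series, which requires passing a limit through the super-level measure $\mu(\s(\cdot) > \lambda)$. Since the partial sums $S_N = |F_1| + \sum_{k=1}^{N} |F_{k+1}-F_k|$ form an increasing sequence of nonnegative Borel functions, and the geometric decay in (2) guarantees absolute summability $\sum_{k \ge 1} C_p^k \|F_{k+1}-F_k\|_{\L^p} < \infty$, one may pass to the limit by a routine monotone convergence argument at the level of the super-level measure, or equivalently by truncating at level $N$, applying the finite inequality, and sending $N \to \infty$.
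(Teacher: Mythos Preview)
Your proposal is correct and follows exactly the approach sketched in the paragraph immediately preceding the proposition (telescope, bound $|F|$ by the infinite sum $|F_1|+\sum_k|F_{k+1}-F_k|$, apply the iterated quasi-triangle inequality, and sum the resulting geometric series); the paper does not give a formal proof of this proposition, attributing it instead to Uraltsev. Your caution about extending the quasi-triangle inequality to infinite sums is well placed and is precisely the point the paper glosses over with the phrase ``this can be extended to an infinite series.''
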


We finish by recording an outer $L^p$ version of classical Marcinkiewicz interpolation as shown in \cite[Proposition 3.5]{dt2015}.

\begin{proposition}[Outer Marcinkiewicz interpolation]\label{p.outerInterp} Let $(X,\sigma,\s)$ be an outer measure space and $(Y,\nu)$ be a measure space. Fix $1 \le p_1< p_2 \le \infty$. Let $F: L^{p_j}(Y,\nu) \to \mathcal{B}(X)$ be an operator for $j=1,2$ such that for any $f,g \in L^{p_1}(Y,\nu)+L^{p_2}(Y,\nu)$ and $\lambda\ge 0$ we have
	\begin{enumerate}
	\item $|F(\lambda f)| = |\lambda F(f)|$.
	\item $|F(f+g)| \le C (|F(f)|+|F(g)|)$ for some constant $C>0$.
	\item $\|F(f) \|_{\mathcal{L}^{p_j,\infty}(X,\sigma,\s)} \le A_j \| f \|_{L^{p_j}(Y,\nu)}$ for $j=1,2$.
	\end{enumerate}
Suppose $p \in (p_1,p_2)$. Then there exists a constant $C=C(p_1,p_2,p)>0$ such that 
	$$\| F(f) \|_{\mathcal{L}^p(X,\sigma,\s)} \le C A_1^{\theta_1} A_2^{\theta_2}  \| f \|_{L^p(Y,\nu)}$$ 
	where $0<\theta_1,\theta_2<1$ satisfy $\frac{1}{p} = \frac{\theta_1}{p_1}+\frac{\theta_2}{p_2}$. 
\end{proposition}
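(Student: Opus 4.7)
The plan is to adapt the classical Marcinkiewicz interpolation argument to the outer-$L^p$ framework. Starting from the layer-cake representation
\[
\|F(f)\|_{\L^p(X,\sigma,\s)}^p = \int_0^\infty p\lambda^{p-1}\, \sigma\!\left(\s(F(f))>\lambda\right) d\lambda,
\]
I would, for each $\lambda>0$ and a parameter $c>0$ to be optimized later, split the input as $f = f^\lambda + f_\lambda$ with $f^\lambda := f\cdot 1_{\{|f|>c\lambda\}}$ and $f_\lambda := f - f^\lambda$. Hypotheses $(1)$ and $(2)$ then give the pointwise quasi-subadditivity $|F(f)| \le C\,(|F(f^\lambda)|+|F(f_\lambda)|)$ on $X$.

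Next I would transfer this pointwise bound to a sub-additivity for the super-level measure. Let $C_\s$ denote the quasi-triangle constant of $\s$, and set $\lambda' := \lambda/(2CC_\s)$. If $E_1, E_2 \subset X$ are Borel sets which nearly attain the infima defining $\sigma(\s(F(f^\lambda))>\lambda')$ and $\sigma(\s(F(f_\lambda))>\lambda')$ respectively, then monotonicity of $\s$ together with hypothesis $(2)$ and the quasi-triangle inequality of $\s$ imply that $E_1 \cup E_2$ is admissible for $\sigma(\s(F(f))>\lambda)$. Subadditivity of $\sigma$ therefore yields
\[
\sigma(\s(F(f))>\lambda) \le \sigma\!\left(\s(F(f^\lambda))>\lambda'\right) + \sigma\!\left(\s(F(f_\lambda))>\lambda'\right),
\]
and the weak-type estimates in hypothesis $(3)$ bound the two terms on the right by constant multiples of $A_1^{p_1}\lambda^{-p_1}\|f^\lambda\|_{L^{p_1}}^{p_1}$ and $A_2^{p_2}\lambda^{-p_2}\|f_\lambda\|_{L^{p_2}}^{p_2}$ respectively.

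For $p_2<\infty$, substituting these bounds into the layer-cake integral and applying Fubini reduces the estimate to the inner integrals $\int_0^{|f(y)|/c}\lambda^{p-1-p_1}\,d\lambda$ (finite because $p>p_1$) and $\int_{|f(y)|/c}^\infty \lambda^{p-1-p_2}\,d\lambda$ (finite because $p<p_2$). Evaluating them and re-integrating in $y$ produces
\[
\|F(f)\|_{\L^p(X,\sigma,\s)}^p \,\lesssim\, \left(A_1^{p_1}\,c^{p_1-p} \;+\; A_2^{p_2}\,c^{p_2-p}\right)\|f\|_{L^p(Y,\nu)}^p.
\]
Choosing $c = (A_1^{p_1}/A_2^{p_2})^{1/(p_2-p_1)}$ balances the two terms, and a short bookkeeping calculation shows that the resulting constant equals a multiple of $A_1^{p\theta_1}A_2^{p\theta_2}$ with $\theta_1, \theta_2$ satisfying $\theta_1+\theta_2=1$ and $\theta_1/p_1+\theta_2/p_2 = 1/p$ as in the statement.

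The endpoint case $p_2=\infty$ is handled more cleanly: hypothesis $(3)$ gives $\s(F(f_\lambda))(T) \le \|F(f_\lambda)\|_{\L^\infty(X,\sigma,\s)}\le A_2\|f_\lambda\|_{L^\infty(Y,\nu)}\le A_2 c\lambda$ for every tent $T$, so choosing $c := 1/(4CC_\s A_2)$ forces the $f_\lambda$-contribution to the super-level measure to vanish identically, and only the $f^\lambda$-term with the $p_1$ weak bound remains. The principal technical step throughout is the second one: pointwise sublinearity of $F$ does not automatically transfer to sub-additivity of the super-level measure over two different inputs, and it is the quasi-triangle property of $\s$ together with the explicit choice of near-optimal covering sets $E_1, E_2$ that makes the transfer go through. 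Once that step is secured, the remainder is the same optimization used in the classical Marcinkiewicz theorem.
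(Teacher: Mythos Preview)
The paper does not prove this proposition; it merely records it with a citation to \cite[Proposition 3.5]{dt2015}. Your argument is correct and is precisely the standard adaptation of the classical Marcinkiewicz proof to the outer-$L^p$ setting that appears in that reference: the only genuinely new ingredient over the classical case is your second step, where pointwise quasi-sublinearity of $F$ is upgraded to sub-additivity of the super-level measure via the quasi-triangle property of $\s$ and the union $E_1\cup E_2$ of near-optimal covering sets, and you have handled that step correctly.
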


\section{$L^2$ restriction estimates for the wavelet projection operator}
\label{s.wellsep_lebesgue}

In this section we collect and prove some local $L^2$ estimates for the wavelet projection operator
$$P (f)(y,\eta,t)=\<f,\phi_{y,\eta,t}\>, \qquad (y,\eta,t)\in X:=\R\times \R \times \R_+,$$
and recall that $\phi_{y,\eta,t}$ is the $L^1$ normalized wave packet of $\phi$ at $(y,\eta,t)\in X$, defined in \eqref{e.L1wavepacket}. For the convenience of the reader, we recall this definition below,
$$\phi_{y,\eta,t}(x) = e^{-i\eta(y-x)} \frac 1 {t} \overline{\phi \Big(\frac{y-x}t \Big)},$$
here $\phi$ is often referred to as the mother wavelet in the literature.
Our methods actually work for more general setups, where the mother wavelet $\phi$ may depend on $(y,\eta,t)$, as long as it satisfies uniform decay estimates of Schwartz type and uniform frequency support conditions. For the simplicity of the presentation, the proof is only presented in our simpler setup where the wave packets all share the same mother wavelet $\phi$.

We look for geometric conditions on $Y\subset X$ such that there is a nontrivial improvement of the basic estimate
$$\Bigg( \int_Y |\<f,\phi_{y,\eta,t}\>|^2 \,  dy d\eta dt \Bigg)^{1/2} \lesssim \sup_{(y,\eta,t)\in Y} |\<f, \phi_{y,\eta,t}\>| \sqrt{|Y|}.$$
Here $|Y|$ is the 3D Lebesgue measure of $Y$. 
Note that if $Y$ is the lacunary region of a tent then   
$$\Bigg( \int_Y |\<f,\phi_{y,\eta,t}\>|^2 \,  dy d\eta dt \Bigg)^{1/2}  \lesssim \|f\|_2$$
thanks to Calder\'on Zymgund theory. Thus, we expect that the tent structure and $\|f\|_2$ will play an important role in the estimate and in the assumed geometric structure of $Y$. In fact, if we normalize $\|f\|_2=1$ then a first step is to obtain some geometric condition on $Y$ such that there is an improvement of the following nature
$$\Bigg( \int_Y |\<f,\phi_{y,\eta,t}\>|^2 dy d\eta dt \Bigg)^{1/2} \lesssim_\epsilon \Big(1+\sup_{(y,\eta,t)\in Y} |\<f, \phi_{y,\eta,t}\>| \sqrt{|Y|}\Big)^{1-\epsilon},$$
for some $\epsilon \in (0,1)$. This will be the main theme of this section.

\subsection{Discrete restriction estimates} 
We first consider a discretized variant of the above local $L^2$ norm, namely geometric conditions on a set of points $E\subset X$ such that there is some $\epsilon\in (0,1)$ such that
\begin{equation}\label{e.wellsep_discrete_target}
\Big(\sum_{(y,\eta,t)\in E} t|\<f,\phi_{y,\eta,t}\>|^2 \Big)^{1/2} \lesssim \|f\|_2^{\epsilon}\left(\|f\|_2+\sup_{(y,\eta,t)\in E} |\<f, \phi_{y,\eta,t}\>| \Big(\sum_{(y,\eta,t)\in E} t\Big)^{1/2}\right)^{1-\epsilon}.
\end{equation}

A preliminary result we need is the following standard estimate regarding inner products of wave packet; see \cite{thiele2006} for proof in phase plane analog.

\begin{lemma}\label{l.wavepacket1}
Let $\phi \in \mathscr{S}(\R)$. Consider points $(y,\eta,t)$, $(y',\eta',t') \in X$ with $t' \ge t$. Then for any integer $N\ge 1$, 
    $$ \left| \<\phi_{y,\eta,t},\phi_{y',\eta',t'} \> \right| \lesssim_{\phi,N}  (t')^{-1} \,  \Big[ 1+\Big(\frac{|y-y'|}{t'}\Big)^2 \Big]^{-N} . $$
\end{lemma}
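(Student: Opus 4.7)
The plan is to prove the bound by pure size estimates on the two wave packets, discarding all oscillatory structure. The right-hand side of the claimed inequality has no dependence on the frequencies $\eta, \eta'$, so there is no need to integrate by parts or to invoke the compact frequency support of $\phi$; the Schwartz decay of the mother wavelet alone will suffice.

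First I would use that $\phi \in \mathscr{S}(\R)$ gives, for every $N$, the pointwise decay $|\phi(x)| \lesssim_{N,\phi} (1+x^2)^{-N}$, which after rescaling yields
$$|\phi_{y,\eta,t}(x)| \lesssim_{N,\phi} t^{-1} \Big[1+\Big(\frac{x-y}{t}\Big)^2\Big]^{-N}$$
and analogously for $(y',\eta',t')$. Passing to absolute values inside the inner product then reduces the lemma to the product-of-bumps estimate
$$\int_\R \frac{1}{tt'} \Big[1+\Big(\frac{x-y}{t}\Big)^2\Big]^{-N} \Big[1+\Big(\frac{x-y'}{t'}\Big)^2\Big]^{-N}\, dx \;\lesssim\; (t')^{-1} \Big[1+\Big(\frac{y-y'}{t'}\Big)^2\Big]^{-N}.$$

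To prove this integral bound, I would split $\R$ into $R_1 = \{x : |x-y| \le |y-y'|/2\}$ and $R_2 = \R \setminus R_1$. On $R_1$ the triangle inequality yields $|x-y'| \ge |y-y'|/2$, so the second factor is dominated by $[1+(|y-y'|/(2t'))^2]^{-N}$ and can be pulled out; the leftover factor $t^{-1}[1+((x-y)/t)^2]^{-N}$ integrates to an $O(1)$ constant after the substitution $u=(x-y)/t$. On $R_2$ the hypothesis $t \le t'$ enters decisively: one has $|x-y|/t \ge |y-y'|/(2t) \ge |y-y'|/(2t')$, so the first factor is dominated by $[1+(|y-y'|/(2t'))^2]^{-N}$, and the leftover $(t')^{-1}[1+((x-y')/t')^2]^{-N}$ integrates to $O(1)$ by the analogous rescaling. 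Combining the two pieces yields the claimed inequality up to a factor depending only on $N$ and $\phi$.

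I do not expect a genuine obstacle here; the lemma is a routine Schwartz-tail calculation. The single step requiring any care is the conversion of $t$-scale decay in $x-y$ on $R_2$ into a $t'$-scale estimate, which is precisely the point where the assumption $t \le t'$ is used. The cost of not exploiting any oscillation or frequency support of $\phi$ is exactly the absence of a frequency-gap factor in the final bound, which matches how the lemma is stated.
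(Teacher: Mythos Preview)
The paper does not actually prove this lemma; it only cites Thiele's lecture notes for the phase-plane analog. Your overall strategy---take absolute values, reduce to a convolution of two rescaled bumps, and split according to whether $x$ is close to $y$ or not---is the standard one, and the region $R_1$ is handled correctly.

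There is, however, a genuine gap in your treatment of $R_2$. After bounding $\big[1+((x-y)/t)^2\big]^{-N} \le \big[1+(|y-y'|/(2t'))^2\big]^{-N}$ you claim the leftover integrand is $(t')^{-1}\big[1+((x-y')/t')^2\big]^{-N}$, but you have silently dropped the factor $t^{-1}$ coming from the $L^1$ normalization of $\phi_{y,\eta,t}$. The true leftover integrates to a bound of order $t^{-1}\big[1+(|y-y'|/(2t'))^2\big]^{-N}$, which carries the wrong power of scale when $t\ll t'$; indeed already in the degenerate case $y=y'$ (where $R_2$ is essentially all of $\R$) your bound reads $O(t^{-1})$ rather than the required $O((t')^{-1})$. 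The fix is immediate: since $\phi$ is Schwartz you may take one extra power of decay on the smaller-scale bump, using exponent $N+1$ for $\phi_{y,\eta,t}$. On $R_2$ you then extract $N$ powers to produce $\big[1+(|y-y'|/(2t'))^2\big]^{-N}$, and the remaining factor $t^{-1}\big[1+((x-y)/t)^2\big]^{-1}$ integrates (in $x$) to an absolute constant, absorbing the stray $t^{-1}$. With this adjustment your argument goes through.
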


We now define a notion of well-separation for a discrete collection of points in $X$ which is an extension of the analog of well-separation in phase plane analysis. 

\begin{definition}
A collection of points $E \subset X$ is {\bf well-separated} if there exists constants $\alpha,\beta > 0$ such that for all $(y,\eta,t),(y',\eta',t') \in E$, either 
    \begin{equation}\label{e.wellsep_pts}
        |y - y'| > \alpha \max(t,t') \qquad \text{ or } \qquad |\eta - \eta'| > \beta \max\big(t^{-1},(t')^{-1}\big) .
    \end{equation}
\end{definition}

Over a discrete collection of well-separated points in $X$, it is possible to obtain a version of estimate~\ref{e.wellsep_discrete_target} in the following form. 

\begin{lemma}\label{l.wellsep1} 
Fix $\phi \in \mathscr{S}(\R)$ such that $\supp \widehat{\phi} \subset (-\delta,\delta)$. Consider a countable collection  of well separated  points  $E = \{(y_k,\eta_k,t_k) \} \subset X$   with separation constants $\alpha>0$ and $\beta \ge 4 \delta$ as in \eqref{e.wellsep_pts}. Suppose $s \in (0,1)$. Then for any $f \in L^2(\R)$,
\begin{equation}\label{e.wellsep1}
    \Bigg(\sum_{k}  t_k |\<f,\phi_{y_k,\eta_k,t_k}\>|^2 \Bigg)^{1/2} \lesssim_{\phi,\alpha,s} \|f\|_2  + \Bigg[\sup_{k} \left|\<f,\phi_{y_k,\eta_k,t_k}\>\right| \Big(\sum_{k} t_k\Big)^{1/2} \Bigg]^{\, s} \|f\|_2^{1-s} .
\end{equation}     
\end{lemma}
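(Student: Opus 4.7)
The plan is to prove this by a Cauchy--Schwarz/duality argument combined with almost-orthogonality of well-separated wave packets, reducing the general-$s$ estimate to a single anchor instance and using interpolation with the trivial bound for the remaining range. Set $c_k = \<f,\phi_{y_k,\eta_k,t_k}\>$, $S = \sum_k t_k |c_k|^2$, $M = \sup_k |c_k|$, and $N = \sum_k t_k$; the target becomes $S^{1/2} \lesssim \|f\|_2 + M^s N^{s/2}\|f\|_2^{1-s}$. First I would prove an anchor estimate of the form $S^{1/2} \lesssim \|f\|_2 + M^{s_0} N^{s_0/2}\|f\|_2^{1-s_0}$ for some specific $s_0 \in (0,1)$, and then interpolate (via the elementary inequality $\min(A,B) \le A^{\lambda}B^{1-\lambda}$) with the trivial bound $S^{1/2} \le MN^{1/2}$ (the formal $s=1$ endpoint) to recover the full range $s \in (0,1)$; the constant is allowed to blow up as $s \to 0^+$ or $s \to 1^-$, matching the stated $s$-dependence.

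For the anchor, apply Cauchy--Schwarz:
\begin{equation*}
    S = \Big\<f,\, \sum_k t_k c_k \phi_{y_k,\eta_k,t_k}\Big\> \le \|f\|_2 \Big\|\sum_k t_k c_k \phi_{y_k,\eta_k,t_k}\Big\|_2,
\end{equation*}
so the crux is to bound $\|g\|_2^2$ for $g := \sum_k t_k c_k \phi_{y_k,\eta_k,t_k}$. Expanding, the diagonal contribution is $\|\phi\|_2^2\cdot S$ (using $\|\phi_{y,\eta,t}\|_2^2 = t^{-1}\|\phi\|_2^2$). Since $\supp\widehat\phi \subset (-\delta,\delta)$, the wave packet $\phi_{y,\eta,t}$ has Fourier support in $(\eta-\delta/t,\eta+\delta/t)$; the assumption $\beta \ge 4\delta$ then ensures that off-diagonal pairs $(k,k')$ well-separated in frequency have disjoint Fourier supports and contribute zero. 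The remaining off-diagonal pairs must enjoy spatial well-separation $|y_k-y_{k'}| > \alpha\max(t_k,t_{k'})$, where Lemma~\ref{l.wavepacket1} supplies rapid spatial decay.

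To bound the off-diagonal, use $|c_k\overline{c_{k'}}| \le M(|c_k|+|c_{k'}|)/2$ and symmetry of the sum to arrive at $|S_{\text{off}}| \lesssim M \sum_k t_k|c_k| R_k$ with $R_k := \sum_{k'\ne k} t_{k'}|\langle \phi_{y_k,\eta_k,t_k},\phi_{y_{k'},\eta_{k'},t_{k'}}\rangle|$; a Cauchy--Schwarz in $k$ then gives $|S_{\text{off}}| \lesssim M S^{1/2}(\sum_k t_k R_k^2)^{1/2}$. The quantity $\sum_k t_k R_k^2$ is estimated by a double dyadic decomposition in the scale ratio $t_{k'}/t_k$ and in the spatial distance $|y_{k'}-y_k|$, combining Lemma~\ref{l.wavepacket1}, the spatial well-separation, and the packing principle that a well-separated collection has at most one point per phase-space cell of size $\alpha t_{k'} \times \beta/t_{k'}$. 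Integrating against the total weight $\sum_{k'} t_{k'} = N$ rather than against counting measure yields $\sum_k t_k R_k^2 \lesssim N$, hence $|S_{\text{off}}| \lesssim M S^{1/2} N^{1/2}$. Substituting back and solving the resulting inequality produces the anchor, and the interpolation described above finishes the proof.

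The main technical obstacle is the off-diagonal estimate. The decay in Lemma~\ref{l.wavepacket1} is purely spatial, while the number of well-separated points at scale $t_{k'}\sim 2^j t_k$ inside the freq-active window of $\phi_{y_k,\eta_k,t_k}$ grows like $2^j$, so naive pointwise Schur row-sum bounds on $R_k$ diverge across scales. This divergence is structural: the full Bessel bound $S \lesssim \|f\|_2^2$ cannot hold for arbitrary multi-scale well-separated collections. The rescue is to pair each weight $t_{k'}$ with the $\max(t_k,t_{k'})^{-1}$ factor from Lemma~\ref{l.wavepacket1} and extract $N^{1/2}$ through an additional Cauchy--Schwarz, rather than attempting a pointwise bound on $R_k$. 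This mechanism produces the weaker-than-Bessel form of the conclusion, with the extra term $(\sup_k|c_k|)^s(\sum_k t_k)^{s/2}\|f\|_2^{1-s}$ quantifying the necessary loss.
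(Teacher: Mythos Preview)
Your off-diagonal step has a genuine gap: the claim $\sum_k t_k R_k^2 \lesssim N$ is false in general. Consider a family of $\sim 2^{M}$ small-scale points $(2\alpha j,0,1)$ together with $\sim 2^M$ large-scale points $(y_0,\,2\beta\ell\,2^{-M},\,2^M)$ at a common location $y_0$ placed just beyond distance $\alpha 2^M$ from every small point. Roughly $2^{M}$ of the large points lie in the frequency-active window of each small point $k$, and for each such $k'$ one has $t_{k'}|\langle\phi_k,\phi_{k'}\rangle|\sim 1$; thus $R_k\sim 2^M$ for every small $k$, giving $\sum_k t_k R_k^2\gtrsim 2^{3M}$ while $N\sim 2^{2M}$. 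The early symmetrization $|c_k\overline{c_{k'}}|\le M(|c_k|+|c_{k'}|)/2$ discards exactly the scale asymmetry that makes the sum converge. The paper instead splits the off-diagonal into $\{8t_j< t_k\}$ and $\{8t_k<t_j\}$ \emph{before} applying Cauchy--Schwarz, pairs the larger-scale index with $A^{1/2}$, bounds $|\langle\phi_j,f\rangle|\le D$ only afterward, and is then left with the row sum $\sum_{j:\,8t_j< t_k} t_j|\langle\phi_k,\phi_j\rangle|$, which is $O(1)$ precisely because it runs over strictly smaller scales: the intervals $[y_j-\tfrac{\alpha}{4}t_j,\,y_j+\tfrac{\alpha}{4}t_j]$ are pairwise disjoint and one is integrating a single bump at scale $t_k$.

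Your passage from the anchor $s_0$ to the full range $s\in(0,1)$ is also incomplete. Interpolating with the trivial $s=1$ bound only reaches $s\ge s_0$: when $MN^{1/2}>\|f\|_2$ the right-hand side $\|f\|_2+(MN^{1/2})^s\|f\|_2^{1-s}$ is increasing in $s$, so the $s_0$ estimate cannot imply the tighter $s<s_0$ one. The paper gets small $s$ through a level-set decomposition: applying the $s=1/3$ case to each dyadic layer $\{2^{-(j+1)}D<|\langle f,\phi_k\rangle|\le 2^{-j}D\}$ and to the tails $A_{\ge j}$ yields the logarithmic bound $S\lesssim 1+\max\big(0,\log_2[D\,N^{1/2}]\big)$ (after normalizing $\|f\|_2=1$), and then $\max(0,\log x)\lesssim_s x^{2s}$ converts this into the estimate for arbitrary $s\in(0,1)$.
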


The proof of Lemma~\ref{l.wellsep1} follows a format similar to corresponding proofs in the phase plane setting, cf.\ \cite{lt2000,osttw2012,dl2012sm}, and in the continuum setting \cite{dt2015}. We first prove the $s=1/3$ case by appealing to standard phase plane analysis on wave packets. The general $0<s<1$ case follows from a logarithmic argument.

\begin{proof}
Assume the sum of $t_k$ is finite else nothing to show. By scaling, assume without loss of generality that $\|f\|_{2}=1$. For each $k$, let  $\phi_k=\phi_{y_k,\eta_k,t_k}$ and denote $D = \sup_{k} |\< f, \phi_k\>|$. 

\textit{Case $s=1/3$}.   Let $A = \sum_k t_k |\<f,\phi_k\>|^2>0$. Apply H\"older's inequality to get
    	$$ A^2 \le \big\| \sum_k t_k \<f, \phi_k\> \, \phi_k \big\|_2^2 =    \sum_{k,j} t_k t_j  \, \<f,\phi_k\> \,\,  \<\phi_k,\phi_j\>  \,\, \<\phi_j,f\> \,. $$
Split the right-hand double summation into a  ``diagonal" term $\{8^{-1} t_k \le t_j \le 8 t_k\}$ and an ``off-diagonal" term $\{8 t_j < t_k\} \cup \{8 t_k < t_j\}$. By symmetry,  the summation is bounded by  
	\begin{align}
	\sum_{k,j \, : \, 8^{-1} t_k \le t_j \le 8 t_k}   t_k t_j  \, \<f,\phi_k\> \,\,  \<\phi_k,\phi_j\>  \,\, \<\phi_j,f\> \,  \label{e:Type1Diag}\\
		& \hspace{-3cm} + 2	\sum_{k,j  \, :8t_j < t_k}   t_k t_j  \, \<f,\phi_k\> \,\,  \<\phi_k,\phi_j\>  \,\, \<\phi_j,f\> \, .\label{e:Type1DiagOff}
	\end{align}
	
To estimate the diagonal summand \eqref{e:Type1Diag}, use symmetry to bound the smaller of the $f$-inner products with the large one to obtain
	\[ \sum_{k,j  \, : \, 8^{-1} t_k \le t_j \le 8 t_k}   t_k t_j | \, \<f,\phi_k\> \,\,  \<\phi_k,\phi_j\>  \,\, \<\phi_j,f\> \,|  \le 2 \sum_{k } t_k |\<f,\phi_k\>|^2 \Big(\sum_{j \, : \, 8^{-1} t_k \le t_j \le 8t_k} t_j|\< \phi_k , \phi_j \>|\Big).  \]
We now show the inner summation of wave packets is bounded uniformly over all $k,j$. For fixed $k$, assume without loss of generality that $\< \phi_k, \phi_j \> \neq 0$ for all $j$ terms. Application of Lemma~\ref{l.wavepacket1} and equivalence of scales $t_k \sim t_j$ yield the inner product estimate
    \[\sum_{j \, : \, 8^{-1} t_k \le t_j \le 8t_k} t_j|\< \phi_k , \phi_j \>| \lesssim_\phi  \sum_{j \, : \, 8^{-1} t_k \le t_j \le 8t_k} \Big[1+\Big|\frac{y_j-y_k}{t_k}\Big|^2\Big]^{-1}  \lesssim_\phi \sum_{m=0}^\infty (1+m^2)^{-1} \#E_k(m) \]
where
		\[ E_k(m) := \Big\{ (y,\eta,t) \in E  \, : \,  8^{-1} t_k \le t \le 8 t_k, \,\, \<\phi_k, \phi_{y,\eta,t}\> \neq 0,  \,\, \frac{m}{4} t_k \le |y_k-y| < \frac{m+1}{4} t_k \Big\}.\]
It remains to show  $\# E_k(m)$ is uniformly bounded over all $m$ and $k$.  The nonzero inner product condition between $\phi_k$ and any $\phi_{y,\eta,t}\in E_k(m)$ implies they have overlapping frequency support with  $|\eta_k - \eta| \le 9 \delta t_k^{-1}$. Thus, all $(y,\eta,t) \in E_k(m)$ must lie over a rectangular region with bounded area dependent on $\delta$. Moreover,  well separation of $E$ implies for any $(y,\eta,t)$, $(y',\eta',t') \in E_k(m)$ that either $|y - y'| >  2^{-3} \alpha t_k $ or $|\eta - \eta'| > 2^{-1} \delta {t_k}^{-1}$. Therefore, the order of $E_k(m)$ $\lesssim \alpha^{-1}$ is uniformly bounded for all $k,m$ and so the diagonal term \eqref{e:Type1Diag} has the estimate
	\begin{equation}\label{ineq:Type1DiagOff_1}
	\sum_{k,j  \, : \, 8^{-1} t_k \le t_j \le 8 t_k}   t_k t_j  \, \<f,\phi_k\> \,\,  \<\phi_k,\phi_j\>  \,\, \<\phi_j,f\> \, \lesssim_{\phi,\alpha} A.
	\end{equation}

Turing our attention to  the off-diagonal summand \eqref{e:Type1DiagOff},  application of the Cauchy Schwarz inequality yields the bound
	\[\sum_{k,j \,:\, 8 t_j < t_k}  t_k t_j  \, \<f,\phi_k\> \,\,  \<\phi_k,\phi_j\>  \,\, \<\phi_j,f\> \, \le  A^{1/2} H^{1/2} \]
where 
	\[ H = \sum_k \Big[ \sum_{j \, : \, 8 t_j < t_k} (t_k)^{1/2} t_j| \< \phi_k, \phi_j \> \,\, \<\phi_j,f\>|\Big]^2 \le D^2  \sum_k t_k \Big[ \sum_{j \, : \, 8 t_j < t_k} t_j| \< \phi_k, \phi_j \> |\Big]^2 . \]
Fix $k$ and without loss of generality assume $\< \phi_k , \phi_j \> \neq 0$ for all $j$ terms in inner summation above. By consideration of frequency support, $|\eta_k - \eta_j| < 2 \delta t_j^{-1} < \beta t_j^{-1}$ and so well separation dictates $|y_k - y_j| > \alpha t_k$. By  Lemma~\ref{l.wavepacket1},
$$ t_j \left| \<\phi_k,\phi_j \> \right| \lesssim_\phi  t_k^{-1} \,  \Big[ 1+\Big(\frac{y_k-y_j}{t_k}\Big)^2 \Big]^{-1} t_j \lesssim_\phi (t_k)^{-1} \int_{[y_j - \frac{\alpha}{4} t_j, y_j + \frac{\alpha}{4} t_j]}\Big[ 1+\Big(\frac{y_k-x}{t_k}\Big)^2 \Big]^{-1} \, dx.$$
We claim the intervals $[y_j - \frac{\alpha}{4} t_j, y_j + \frac{\alpha}{4} t_j]$ above  are all disjoint from each other. Consider another point $(y_\ell,\eta_\ell,k_\ell) \in E$ such that $8t_\ell < t_k$ and $ \<\phi_k , \phi_\ell\> \not=0$. As $\phi_k$ and $\phi_\ell$ have overlapping frequency support (just as $\phi_k$ and $\phi_j$ do),
    $$ |\eta_j - \eta_\ell| \le 2\delta (t_j^{-1} + t_\ell^{-1}) \le \beta \max(t_j^{-1},t_\ell^{-1}).$$
Well separation between $(y_j,\eta_j,t_j)$ and $(y_\ell,\eta_\ell,t_\ell)$ then requires  $|y_j - y_\ell| \ge \alpha \max(t_j,t_\ell)$ which prevents  the intervals from intersecting nontrivially. Hence all such intervals are disjoint and by summing over all such $j$,
$$\sum_k t_k \Big[ \sum_{j \, : \, 8 t_j < t_k} t_j| \< \phi_k, \phi_j \> |\Big]^2 \lesssim_\phi \sum_k t_k \left( \frac{1}{t_k} \int_\R \Big[ 1+\Big(\frac{y_k-x}{t_k}\Big)^2 \Big]^{-1} \, dx  \right)^2 \lesssim C_\phi \sum_k  t_k.$$

This gives us the  off-diagonal inequality
	\begin{equation}\label{ineq:Type1DiagOff_2}
	\sum_{k,j\, :8 t_j < t_k}   t_k t_j  \, \<f,\phi_k\> \,\,  \<\phi_k,\phi_j\>  \,\, \<\phi_j,f\> \,  \lesssim_{\phi} A^{1/2} D \, \big(\sum_{k} t_k \big)^{1/2}.
	\end{equation}
Applying the diagonal estimate \eqref{ineq:Type1DiagOff_1} and off diagonal estimate \eqref{ineq:Type1DiagOff_2} yields the bound
$$ A^2 \lesssim_\phi A +   D \, \big(\sum_{k} t_k \big)^{1/2} A^{1/2} $$
and the desired inequality for case $s=1/3$ follows by rearrangement.

\textit{Case $s \in (0,1)$.} Assume $D = \sup_{k} | \< f, \phi_k\>|$  is finite.   Subdivide the collection  $E = \bigcup_{j \ge 0} A_j$ where
	$$ A_j = \left\{k  \, : \, 2^{-(j+1)} D < |  \< f, \phi_k\>|  \le 2^{-j} D \right\} . $$
Let $A_{\ge j} = \bigcup_{i \ge j} A_i$ be the union of indices corresponding to the points in all levels below $A_j$. For fixed $j$, the sub-collection corresponding to $A_{\ge j}$ is still well separated. Apply the $s=1/3$ estimate to $A_{\ge j}$ to get
	$$ \big(\sum_{k \in A_{\ge j}} t_k |\<f,\phi_k\>|^2\big)^{1/2}  \lesssim_{\phi,\alpha} 1 +\Big[\big(  2^{-j} D\big) \big(\sum_{k  \in A_{\ge j}}   t_k \big)^{1/2}\Big]^{1/3} . $$
Note the right-hand side above decays to 1 for large $j$. By taking  $j \ge \max(0, \log_2[D (\sum_{k} t_k)^{1/2} ])$, 
	\begin{equation}\label{e.type1s_1}
	\sum_{k \in A_{\ge j}} t_k |\<f,\phi_k\>|^2 \lesssim_{\phi,\alpha} 1 .
	\end{equation}
 A similar bound exists over each individual level $A_j$. Indeed, the  points associated with each $A_j$ are themselves well separated so application of the $s=1/3$ case shows
	 $$ \big(\sum_{k \in A_j } t_k |\<f,\phi_k\>|^2\big)^{1/2} \lesssim_{\phi,\alpha} 1 +\Big[\big(  2^{-j} D\big) \big(\sum_{k  \in A_{j}}   t_k \big)^{1/2}\Big]^{1/3} \sim_{\phi,\alpha} 1 +\Big[ (\sum_{k \in A_j} t_k |\<f, \phi_k\>|^2)^{1/2}\Big]^{1/3}  $$
and by rearrangement,
	\begin{equation}\label{e.type1s_2}
	\sum_{k \in A_j} t_k |\<f,\phi_k\>|^2 \lesssim_{\phi,\alpha} 1 .
	\end{equation}
Fix the smallest positive integer $j \ge \max(0, \log_2[D (\sum_{k} t_k)^{1/2} ])$ and apply \eqref{e.type1s_1}-\eqref{e.type1s_2} to obtain the  logarithmic estimate 
	\begin{align}
	\begin{split}
	\sum_{k} t_k |\<f, \phi_k \>|^2  &=  \sum_{k \in A_{\ge j}}  t_k |\<f, \phi_k \>|^2   +  \sum_{i=0}^{j-1} \sum_{k \in A_i}  t_k |\<f, \phi_k \>|^2     \\
		&\lesssim_{\phi,\alpha} 1 +   \max\Big(0, \log_2[D (\sum_{k} t_k )^{1/2}]\Big). \label{e.wellsep1_log}    
	\end{split}
	\end{align}
The passage to arbitrary $s \in (0,1)$ uses the standard logarithmic properties  $s \log (x) = \log(x^s)$ and $\max(0,\log x) \le x$ for all $x > 0$, 	
    \begin{equation*}
     \big(\sum_k  t_k |\<f,\phi_k\>|^2\big)^{1/2} \lesssim_{\phi,\alpha,s} \left( 2s + \Big[D \big(\sum_k t_k \big)^{1/2}\Big]^{2s} \right)^{1/2} \lesssim_{\phi,s} 1 +  \Big[ D  \big(\sum_k t_k \big)^{1/2}\Big]^{\, s}. \qedhere
     \end{equation*}
\end{proof}

\subsection{Continuous restriction estimates}\label{subs.wellsep}

In this section, we describe a set of geometric conditions on a set $Y\subset X$ such that there is some nontrivial estimate for 
$$\int_Y |\<f,\phi_{y,\eta,t}\>|^2 \, dy d\eta dt.$$ 
As indicated at the beginning of this section, these conditions will involve some union of tents, or more precisely union of lacunary parts of tents. 

We now define a notion of well separation with regards to a collection of 3D  tents; assume  all tents  have the same parameterization $\Theta = (C_1,C_2,b)$. Below by \textit{partial tents} we mean subsets of tents.

\begin{definition}
Consider a collection of partial tents $E = \{ T^*_k\}$ where $ T^*_k \subset T_k = T(x_k,\xi_k,s_k)$ and the $T^*_k$ are pairwise disjoint from each other.  The collection $E$ is \textbf{well-separated} if there exists constants $\alpha \ge 1$, $\beta >0$, and $B > 1$ such that if $(y,\eta,t) \in T^*_k$ and $(y',\eta',t') \in \bigcup_j T^*_j$ satisfy $B t' < t$ then either 
    \begin{equation}\label{e.wellsep_partialtents}
        |\eta - \eta'| > \beta (t')^{-1} \qquad \text{ or } \qquad |x_k - y'| > \alpha (s_k - t') .
    \end{equation}
\end{definition}

The well separation   mandates points sufficiently small in scale with respect to a partial tent $T^*_k$ must either be sufficiently far in frequency from another point in $T^*_k$ else in another tent altogether.  
Note that when $T^*_k$ is a singleton set with coordinates comparable to the tent $T_k$ and $\alpha>1$, this reduces to well separation of points. It would be interesting to see an appropriate well separation definition for an arbitrary $Y \subset X$ but this is beyond the scope of the paper.

Similar to the well separation of points, the projection operator $P(f)$ over well separated partial tents is almost $L^2$.

\begin{lemma}\label{l.wellsep2}  Fix  $\phi \in \mathscr{S}(\R)$ such that $\supp \widehat{\phi} \subset (-\delta,\delta)$. Consider a collection of well separated partial tents $E = \{ T^*_k = T^*(x_k,\xi_k,s_k)\}$ with separation constants $\alpha \ge 1$, $\beta \ge 4\delta$, and $B>1$ as in \eqref{e.wellsep_partialtents}. Suppose $s \in (0,1)$. Then for any $f \in L^2(\R)$,
\begin{equation}
    \Bigg(\int_{ \bigcup_k T^*_k} |\<f, \phi_{y,\eta,t}\>|^2 dyd\eta dt \Bigg)^{1/2} \lesssim \|f\|_2  + \Bigg[\sup_{(y,\eta,t)\in  \bigcup_k T^*_k} \left|\<f, \phi_{y,\eta,t}\>\right| \Big(\sum_k s_k\Big)^{1/2}\Bigg]^{\, s} \|f\|_2^{1-s}
\end{equation}
where  the implicit constant depends on $\phi$, $\Theta$, $B$, $\beta$, and $s$. 
\end{lemma}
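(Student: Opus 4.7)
The plan is to reduce Lemma~\ref{l.wellsep2} to its discrete counterpart Lemma~\ref{l.wellsep1} via a dyadic Gabor-type discretization of the partial tents, combined with a residue-class decomposition in scale that recovers the pointwise well-separation hypothesis. The first step is to slab each partial tent by $T^{*,j}_k := T^*_k \cap \{(y, \eta, t) : 2^j \le t < 2^{j+1}\}$ for $j \in \Z$, and introduce the grid $\Gamma_j := \{(m\gamma 2^j, n\gamma 2^{-j}, 2^j) : m, n \in \Z\}$ at scale $2^j$, where $\gamma > 0$ is a small constant chosen in terms of $\phi, \alpha, \beta, \delta$. For each slab, let $E^j_k \subset \Gamma_j$ consist of grid points whose $(\gamma 2^j, \gamma 2^{-j})$-box in the $(y, \eta)$ plane meets the projection of $T^{*,j}_k$, and set $E := \bigsqcup_{j, k} E^j_k$.

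The next step uses the Schwartz decay of $\phi$ together with the compact frequency support $\supp \hat\phi \subset (-\delta, \delta)$ to obtain, via a standard frame-type estimate at each dyadic scale, the continuous-to-discrete upper bound
\begin{equation*}
\int_{T^{*,j}_k} |\<f, \phi_{y,\eta,t}\>|^2 \, dy \, d\eta \, dt \lesssim_\phi 2^j \sum_{(y^*, \eta^*, t^*) \in E^j_k} |\<f, \phi_{y^*, \eta^*, t^*}\>|^2,
\end{equation*}
provable by an explicit Gabor expansion at scale $2^j$ or by a Cauchy--Schwarz argument invoking Lemma~\ref{l.wavepacket1} to control the cross terms. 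Summing over $j$ and $k$ gives
\begin{equation*}
\int_{\bigcup_k T^*_k} |\<f, \phi_{y,\eta,t}\>|^2 \, dy \, d\eta \, dt \lesssim_\phi \sum_{(y^*, \eta^*, t^*) \in E} t^* \, |\<f, \phi_{y^*, \eta^*, t^*}\>|^2.
\end{equation*}

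Then partition $E = \bigsqcup_{r=0}^{R-1} E_r$ into $R := \lceil \log_2 B \rceil + 2$ sub-collections by the residue class of $j$ modulo $R$, so that any two distinct scales within the same $E_r$ differ by a factor of at least $B$. Each $E_r$ should then be well-separated in the sense of Lemma~\ref{l.wellsep1}, with separation constants proportional to $\gamma$ and $\beta$: at equal scales, separation is inherited from the grid $\Gamma_j$; at distinct scales within $E_r$, applying the partial tent hypothesis \eqref{e.wellsep_partialtents} to nearby representatives in the original tents yields either a frequency separation $\gtrsim \beta/\min(t^*, t'^*)$, or a spatial separation $|y^* - y'^*| \gtrsim \max(t^*, t'^*)$ arising from $\alpha > 1$ together with the constraint $s_k > t^*$ from the inclusion $T^*_k \subset T(x_k,\xi_k,s_k)$.

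Finally, apply Lemma~\ref{l.wellsep1} to each $E_r$ with the given $s \in (0,1)$. A direct count of grid points over each dyadic slab, exploiting the conical profile $|y - x_k| < s_k - t$ of each tent, telescopes to $\sum_{E_r} t^* \lesssim \sum_k s_k$; summing over the $R = O(\log_2 B)$ residue classes using the triangle inequality yields the desired estimate. The main obstacle is the verification of pointwise well-separation of the discretized collection $E_r$ across different scales, since the partial tent hypothesis provides it only when scale ratios exceed $B$; this is precisely what necessitates the residue-class decomposition, and the conversion of the alternative spatial condition $|x_k - y'| > \alpha(s_k - t')$ into a grid-level $y$-separation crucially relies on the assumption $\alpha > 1$, without which the geometric bookkeeping degenerates.
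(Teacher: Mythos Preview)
Your discretization strategy differs from the paper's proof, which runs a continuous $TT^*$ argument directly on $\bigcup_k T^*_k$: after Cauchy--Schwarz on $A^2$, the diagonal region $B^{-1}t\le t'\le Bt$ is controlled by Claim~\ref{cl.diagonal}, the off-diagonal region by Claim~\ref{cl.off-diagonal} (which uses the partial-tent separation hypothesis to show that for each fixed $y'$ only an $O_B(1)$ range of scales $t'$ can contribute), and the passage from $s=1/3$ to general $s\in(0,1)$ goes through level sets together with the auxiliary Lemma~\ref{l.simplesquare}. No reduction to the discrete Lemma~\ref{l.wellsep1} is attempted.

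The central gap in your argument is at the final step. Applying Lemma~\ref{l.wellsep1} to $E_r$ produces on the right-hand side the quantity $\sup_{(y^*,\eta^*,t^*)\in E_r}|\<f,\phi_{y^*,\eta^*,t^*}\>|$, a supremum over grid points that by construction lie only \emph{near} $\bigcup_k T^*_k$, not inside it. The conclusion of Lemma~\ref{l.wellsep2} requires instead $D=\sup_{\bigcup_k T^*_k}|\<f,\phi_{y,\eta,t}\>|$, and there is no pointwise inequality $|\<f,\phi_{y^*,\eta^*,t^*}\>|\lesssim D$: the transform may vanish on $T^*_k$ while being large at an adjacent grid point. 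One repair is to choose representatives inside each nonempty $T^{*,j}_k\cap(\text{box})$ (say a maximizer of $|\<f,\phi\>|$ there); the frame inequality then becomes the trivial $\int_{\text{box}}\le|\text{box}|\cdot\sup$, but equal-scale separation is lost and a further $O(1)$ coloring of boxes is needed. Two smaller points: the transfer of the frequency separation from tent points to grid points loses a factor (from $t'<2t'^*$) plus $O(\gamma)$, so when $\beta$ is near $4\delta$ the effective constant can drop below the threshold required by Lemma~\ref{l.wellsep1}; and the hypothesis is $\alpha\ge 1$, not $\alpha>1$ --- in fact $\alpha=1$ already gives $|y-y'|>(s_k-t')-(s_k-t)=t-t'>(1-B^{-1})t$, so the claimed degeneracy does not occur.
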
  

The proof uses the same structure as in Lemma~\ref{l.wellsep1} where one proves the $s=1/3$ case and then generalizes to $0<s<1$. In order to show the general $0<s<1$ inequality, we need  the following simpler estimate.

\begin{lemma} \label{l.simplesquare} 

Fix $\phi \in \mathscr{S}(\R)$ such that $\supp \widehat{\phi} \subset (-\delta,\delta)$. Consider a subset $Y \subset \bigcup_k T_k^*$ with nonzero (three-dimensional) Lebesgue measure where the collection of partial tents $T^*_k$ is well-separated as in Lemma~\ref{l.wellsep2}.   
Then 
$$\Bigg(\int_Y |\<f,\phi_{y,\eta,t}\>|^2 dy d\eta dt\Bigg)^{1/2} \lesssim \|f\|_2 + \Bigg[\sup_{(y,\eta,t)\in Y} \left|\<f, \phi_{y,\eta,t}\>\right|  \,  \sqrt{|Y|} \Bigg]^{1/3}\|f\|_2^{2/3}.$$

\end{lemma}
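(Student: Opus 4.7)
The plan is to discretize $Y$ by dyadic cells, pass to a countable collection of representative points in $Y$, and invoke (a straightforward weighted extension of) the $s=1/3$ case of Lemma~\ref{l.wellsep1}. Normalize $\|f\|_2=1$ by scaling and write
$$A := \int_Y |\<f,\phi_{y,\eta,t}\>|^2\,dy\,d\eta\,dt, \qquad D:= \sup_{(y,\eta,t)\in Y}|\<f,\phi_{y,\eta,t}\>|;$$
we may assume $|Y|<\infty$, otherwise the desired bound is vacuous. For each $j\in\Z$, partition the slab $\R\times\R\times[2^{j-1},2^j)$ into cells
$$Q_{m,n,j}:=[m a 2^j,(m{+}1) a 2^j)\times[n b 2^{-j},(n{+}1) b 2^{-j})\times[2^{j-1},2^j), \qquad m,n\in\Z,$$
for positive constants $a$ and $b$ to be chosen (the latter sufficiently large relative to $\delta$). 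Let $\mathcal{C}$ collect the cells with $|Q\cap Y|>0$ and, for each such $Q$, pick by pigeonhole a representative $\omega_Q\in Q\cap Y$ satisfying $|\<f,\phi_{\omega_Q}\>|^2\ge\tfrac{1}{2|Q\cap Y|}\int_{Q\cap Y}|\<f,\phi_\omega\>|^2\,d\omega$. Setting $w_Q:=|Q\cap Y|$ and letting $t_Q\in[2^{j(Q)-1},2^{j(Q)})$ denote the scale coordinate of $\omega_Q$, one has $w_Q\le|Q|\lesssim_{a,b} t_Q$, and summing the cellwise inequality $\int_{Q\cap Y}|\<f,\phi_\omega\>|^2\,d\omega\le 2w_Q|\<f,\phi_{\omega_Q}\>|^2$ yields
$$A\le 2\sum_{Q\in\mathcal{C}} w_Q\,|\<f,\phi_{\omega_Q}\>|^2, \qquad \sum_{Q\in\mathcal{C}}w_Q = |Y|.$$

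Next, partition the representative set $E:=\{\omega_Q:Q\in\mathcal{C}\}$ into $M=4C$ color classes via the triple $(m\bmod 2,\,n\bmod 2,\,j\bmod C)$, where $C:=\lceil\log_2 B\rceil+2$. Each class $E_i$ satisfies the well-separation hypothesis of Lemma~\ref{l.wellsep1}. For two distinct $\omega_Q,\omega_{Q'}\in E_i$ at the same scale $j$, parity agreement forces $|m-m'|\ge 2$ or $|n-n'|\ge 2$, producing $|y_Q-y_{Q'}|\gtrsim a\max(t_Q,t_{Q'})$ or $|\eta_Q-\eta_{Q'}|\gtrsim b\max(t_Q^{-1},t_{Q'}^{-1})$. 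For distinct scales $j>j'$ in $E_i$, $j-j'\ge C$ ensures $t_Q/t_{Q'}>B$, activating the partial-tent well-separation applied to $\omega_Q\in T_k^*$: either $|\eta_Q-\eta_{Q'}|>\beta/t_{Q'}$ directly, or $|x_k-y_{Q'}|>\alpha(s_k-t_{Q'})$, which combined with $|y_Q-x_k|<s_k-t_Q$ (and $\alpha\ge 1$) yields $|y_Q-y_{Q'}|>t_Q-t_{Q'}>(1-1/B)\max(t_Q,t_{Q'})$.

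Apply Lemma~\ref{l.wellsep1} with $s=1/3$ to each $E_i$, with weights $w_Q$ in place of $t_Q$ on both sides. The proof of that lemma extends verbatim to any weights $w_k$ satisfying $w_k\lesssim t_k$: its pivotal estimate $\sum_j t_j|\<\phi_k,\phi_j\>|\lesssim 1$ (underpinning both the diagonal and off-diagonal bounds) upgrades to $\sum_j w_j|\<\phi_k,\phi_j\>|\lesssim 1$, so the conclusion $\sum_k w_k|\<f,\phi_k\>|^2\lesssim 1+D^{2/3}(\sum_k w_k)^{1/3}$ is unchanged. Applied to $E_i$ this gives
$$\sum_{\omega_Q\in E_i} w_Q|\<f,\phi_{\omega_Q}\>|^2 \lesssim 1 + D^{2/3}|Y|^{1/3},$$
which, summed over the bounded number of color classes and combined with the cell estimate, yields $A\lesssim 1+D^{2/3}|Y|^{1/3}$, i.e.\ $\sqrt{A}\lesssim 1+(D\sqrt{|Y|})^{1/3}$. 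Undoing the normalization proves the lemma.

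The main obstacle is arranging each $E_i$ to be well-separated: the partial-tent hypothesis supplies a spatial separation only for pairs with $t>Bt'$, so one coloring of scales modulo $C$ is needed to push all cross-scale pairs into that regime, while same-scale pairs require an independent spatial-spectral coloring from the dyadic grid. A secondary but essential observation is that the discrete weights must be taken as $w_Q=|Q\cap Y|$ rather than $t_Q$, so that $\sum_Q w_Q=|Y|$ rather than a potentially much larger quantity; the weighted form of Lemma~\ref{l.wellsep1} accommodates this seamlessly.
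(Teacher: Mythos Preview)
Your argument is correct, but it proceeds quite differently from the paper's. The paper gives a direct continuous $TT^*$ argument on $Y$: after normalizing $\|f\|_2=1$ and setting $A=\int_Y|\<f,\phi_{y,\eta,t}\>|^2$, it bounds $A^2$ by the double integral $\int_{Y\times Y}\<f,\phi_\omega\>\<\phi_\omega,\phi_{\omega'}\>\<\phi_{\omega'},f\>$, splits into a diagonal piece $B^{-1}t'\le t\le Bt'$ and off-diagonal pieces, and controls each using two continuous Claims (uniform bounds on $\int|\<\phi_\omega,\phi_{\omega'}\>|d\omega'$ over the diagonal and off-diagonal regions). The well-separation of the partial tents enters only in the off-diagonal Claim, where it forces the spatial/frequency restrictions that make the integral $O(1)$.

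Your route instead discretizes $Y$ into dyadic time--frequency--scale cells, picks one representative per cell, colors the representatives into $O_B(1)$ classes, and reduces to a weighted version of the already-proved discrete Lemma~\ref{l.wellsep1}. The coloring argument is the main novelty: matching parity in $(m,n)$ handles same-scale pairs, while the $j\bmod C$ coloring forces every cross-scale pair into the regime $t>Bt'$ where the partial-tent separation hypothesis applies, yielding pointwise well-separation with constants roughly $\min(a,1-1/B)$ and $\min(b/2,\beta)$. Your observation that Lemma~\ref{l.wellsep1} extends to weights $w_k\lesssim t_k$ (because the decisive estimate $\sum_j t_j|\<\phi_k,\phi_j\>|\lesssim 1$ only involves $t_j$) is correct and is what lets you recover $\sum w_Q=|Y|$ on the right-hand side rather than the potentially much larger $\sum t_Q$.

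Both approaches are of comparable length. The paper's has the advantage that its two Claims are reused verbatim in the $s=1/3$ case of Lemma~\ref{l.wellsep2}, so the continuous $TT^*$ machinery is set up once and used twice. Your approach has the virtue of cleanly reducing the continuous statement to the discrete one, making the logical dependence transparent; the cost is the coloring bookkeeping and the need to note the weighted extension of Lemma~\ref{l.wellsep1}.
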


\begin{proof}[Proof of Lemma~\ref{l.simplesquare}] By scaling invariant we may assume that $\|f\|_2=1$. Let $A=\int_Y  |\<f,\phi_{y,\eta,t}\>|^2 dy d\eta dt$. By Cauchy Schwarz,
$$A^2 \lesssim \Big\| \int_Y \<f,\phi_{y,\eta,t}\>  \, \phi_{y,\eta,t}dyd\eta dt \Big\|_2^2$$
$$=\int_{Y \times Y}\ \< f,\phi_{y,\eta,t} \>  \,\, \<\phi_{y,\eta,t},\phi_{y',\eta',t'}\>  \,\, \<\phi_{y',\eta',t'},f\> \, dy \,d\eta \, dt \, dy' \, d\eta' \, dt'.$$
Split the integral into three parts: the diagonal $ B^{-1} t'\le t\le Bt'$, the upper  half $t> Bt'$, and the lower half $t'>Bt$. We may estimate  the diagonal part by
$$\le 2  \int_Y |\<f,\phi_{y,\eta,t}\>|^2 \left[\int_{(y',\eta')\in \R^2, \, B^{-1}t'\le t\le Bt'} |\<\phi_{y,\eta,t},\phi_{y',\eta',t'}\>| dy'd\eta' dt' \right] dyd\eta dt .$$

\begin{claim}\label{cl.diagonal} We have the following uniform estimate for all $(y,\eta,t) \in Y$. 
$$ \int_{ B^{-1}t \le t' \le Bt} \int_{\R^2} |\<\phi_{y,\eta,t} ,  \phi_{y',\eta',t'}\>| \, dy' d\eta' \, dt' \lesssim 1$$
\end{claim} 
Using Claim~\ref{cl.diagonal}, it is clear that the diagonal part is $O(A)$. To see Claim~\ref{cl.diagonal} is true, we first note that by taking supremum of $t'$ over $B^{-1}t \le t' \le Bt$ and integrating $dt'$ over that region, we obtain 
 $$\int_{ B^{-1}t \le t' \le Bt} \int_{\R^2} |\<\phi_{y,\eta,t} ,  \phi_{y',\eta',t'}\>| \, dy' d\eta' \, dt'  \lesssim_{B} \sup_{ B^{-1}t \le t' \le Bt} \int_{\R^2} t \, |\< \phi_{y,\eta,t} , \phi_{y',\eta',t'} \>| \, dy' d\eta'  $$
To integrate frequency $\eta'$, recall that nonzero inner poduct of wave packets $\phi_{y,\eta,t}$ and $\phi_{y',\eta',t'}$  implies they have overlapping support in frequency. Restricing to such wave packets, it follows that  $\eta'$ has to be contained in an interval of length comparable to  $t^{-1}$. Thus,  the last display is bounded from above by
	$$ \lesssim_{B,\beta}   \sup_{ B^{-1}t \le t' \le Bt} \,  \sup_{\eta' \in \R} \Big(   \int_{\R}   |\<\phi_{y,\eta,t}, \phi_{y',\eta',t'}\>| \, dy' \Big) $$
	$$ \lesssim_{\phi,B,\beta}   \sup_{ B^{-1}t \le t' \le Bt} \,  \sup_{\eta' \in \R} \Big( t^{-1}  \int_{\R} \Big[1+\big|\frac{y-y'}{t}\big|^2\Big]^{-1} \, dy' \Big) \,\, \lesssim_{\phi,B,\beta} 1 $$
where Lemma~\ref{l.wavepacket1} was used in the passage to the second line. This establishes Claim~\ref{cl.diagonal} and the estimate on the diagonal terms.

For the off-diagonal terms, we show the details for the upper half term where $t>Bt'$ noting the lower half term may be treated similarly. Denote $D=\sup_{(y,\eta,t)\in Y}|\<f,\phi_{y,\eta,t}\>|$ for notation convenience.  By Cauchy-Schwarz,
$$\int_{Y} \<f,\phi_{y,\eta,t}\> \int_{(y',\eta',t')\in Y: B t'<t}\<\phi_{y,\eta,t},\phi_{y',\eta',t'}\> \,\, \<\phi_{y',\eta',t'},f\> \, dy'd\eta' dt' dyd\eta dt   $$
$$\le A^{1/2} \, \left(\int_Y \Big(\int_{(y',\eta',t')\in Y: B t'<t}  \left|\<\phi_{y,\eta,t},\phi_{y',\eta',t'}\> \,\, \<\phi_{y',\eta',t'},f\>\right| dy'd\eta' dt' \Big)^2 dyd\eta dt \right)^{1/2}$$
$$\le A^{1/2} D \left( \int_Y \Big(\int_{(y',\eta',t')\in Y: B t'<t} |\<\phi_{y,\eta,t},\phi_{y',\eta',t'}\>| dy'd\eta' dt' \Big)^2 dyd\eta dt \right)^{1/2}.$$
It remains to show 
$$ \sup_{(y,\eta,t) \in Y} \int_{(y',\eta',t')\in Y: B t'<t} |\<\phi_{y,\eta,t},\phi_{y',\eta',t'}\>| dy'd\eta' dt' \lesssim_\phi 1 .$$
Fix $(y,\eta,t) \in Y$. As $Y\subset \bigcup_j T^*_j$, it suffices to show the following claim.
\begin{claim} \label{cl.off-diagonal} We have the following uniform estimate for all $k$ and $(y,\eta,t) \in T^*_k$,
$$\sum_{j} \int_{T^*_j \,:\, B t' \le t} |\< \phi_{y,\eta,t} , \phi_{y',\eta',t'}\>| \, dy' d\eta' dt'  \lesssim_{\phi}   \Big[ 1+\Big(\frac{s_k-|x_k-y|}{t}\Big) \Big]^{-1}.$$
\end{claim}

Fix $(y,\eta,t) \in T^*_k$ and consider $(y',\eta',t') \in T^*_j$ such that $B t' \le t$. We may assume without loss of generality that $\< \phi_{y,\eta,t} , \phi_{y',\eta',t'} \> \neq 0$ which means their Fourier transforms have overlapping support. As $B t' < t$, 
    $$ |\eta - \eta'| \le 2 \delta (t')^{-1} \le \beta (t')^{-1}$$
so we are integrating $d\eta'$ with respect to an interval of length $2\beta (t')^{-1}$. 
By the well-separation criteria, we  can also restrict  the integral $d y'$ to the region $|y' - x_k| > s_k - t$.  

It remains to consider the integral with respect to $d t'$. For fixed position $y'$, we claim that any points of the form $(y',\eta'',t'') \in \bigcup_j T^*_j$ with $Bt''<t$ and $\<\phi_{y,\eta,t},\phi_{y',\eta'',t''}\>\neq 0$ must be within a factor of $B$ of each other in scale. Indeed, recall $(y',\eta',t') \in T^*_j$ and consider $(y'',\eta'',t'') \in T^*_m$ where $\< \phi_{y,\eta,t} , \phi_{y'',\eta'',t''} \> \neq 0$  and $B t'' \le t$. If we were to assume  $B t'' \le t'$ then it follows that 
    $$|\eta' - \eta''| \le  4 \delta (t'')^{-1} \le \beta (t')^{-1}$$
and so well-separation dictates 
	$$|y'' - x_j| > s_j - t'' > s_j - t' \ge |y' - x_j|.$$
This shows $y'' \neq y'$ so  setting $y'' = y'$ implies that $t'' \sim_B t'$.

Given $y' \in \R$, let $E(y')$ be the collection of scales $t''$ for which there exists $\eta''$ and $T^*_j$ such that $(y',\eta'',t'') \in T^*_j$ with $B t'' < t$ and $\< \phi_{y,\eta,t}, \phi_{y',\eta'',t''} \> \neq 0$. For each $y' \in \R$ with $E(y')\neq \emptyset$, there exists an interval $I(y') = [\alpha(y'), B \alpha(y')]$ which contains $E(y')$. 
Here, one can check
	    $$\alpha(y) := B^{-1} \sup_{t'' \in E(y')} t'' < \infty.$$
	
Applying  Lemma~\ref{l.wavepacket1} the well separation observations above with yields
	$$  \sum_{j} \int_{T^*_j \,:\, B t' \le t} |\< \phi_{y,\eta,t} , \phi_{y',\eta',t'}\>| \, dy' d\eta' dt'  $$ \\[-2.6em]
	\setlength{\jot}{12pt}
	\begin{align*}
	    &\lesssim \int_{|y' - x_k| > s_k -t} \int_{I(y')}  \int_{[\eta - \beta/t',\eta+\beta/t']}  |\< \phi_{y,\eta,t} , \phi_{y',\eta',t'} \>| \,  d\eta' dt' dy' \\
	    &\lesssim_B    \int_{|y' - x_k| > s_k -t} \Bigg(\sup_{t' \in I(y')}\int_{[\eta - \beta/t',\eta+\beta/t']}  t' |\< \phi_{y,\eta,t} , \phi_{y',\eta',t'} \>| \,  d\eta' \Bigg)  dy' \\
	    &\lesssim_{\phi,B,\beta} \int_{|y' - x_k| > s_k -t} t^{-1} \Big[ 1+\Big(\frac{|y-y'|}{t}\Big)^2 \Big]^{-2} dy' \lesssim  \Big[ 1+\Big(\frac{s_k-|x_k-y|}{t}\Big) \Big]^{-1}
	\end{align*}
as desired, verifying Claim~\ref{cl.off-diagonal}. Collecting the diagonal and off-diagonal estimates, we have
$$A^2 \lesssim A + A^{1/2} D |Y|^{1/2}$$
and from here the desired result follows. 
\end{proof}

\begin{proof}[Proof of Lemma~\ref{l.wellsep2} ]
Assume  $D = \sup_{(y,\eta,t)\in \cup_k T^*_k} |\< f , \phi_{y,\eta,t} \>|$ and the sum of $s_k$ are finite. By scaling, we may assume without loss of generality that $\|f\|_{2}=1$.

\textit{Case $s=1/3$}.   Let $A :=  \sum_{k} \int_{T^*_k} |\<f , \phi_{y,\eta,t} \>|^2 \, dy d\eta dt$. By Cauchy-Schwarz,
	$$A^2  \lesssim \sum_{k,j} \int_{T^*_k \times T^*_j}\<f , \phi_{y,\eta,t} \> \,\, \<\phi_{y,\eta,t}, \phi_{y',\eta',t'} \> \,\, \<\phi_{y',\eta',t'}, f \> \, dy d\eta dt \, dy' d\eta' dt'.$$
Split the integral  the diagonal part $ B^{-1} t'\le t\le Bt'$, the upper  half $t> Bt'$, and the lower half $t'>Bt$. It remains to show the diagonal part  is bounded above by $O(A)$ and the off diagonal parts are bounded above by 
$$O\left(\Big[\sup_{(y,\eta,t)\in \cup_k T^*_k} |\< f , \phi_{y,\eta,t} \>| \Big] \, \Big( \sum_k s_k\Big)^{1/2}  A^{1/2}\right).$$
This would imply
    $$ A^2 \lesssim_{\phi} A + \Big[\sup_{(y,\eta,t)\in \cup_k T^*_k} |\< f , \phi_{y,\eta,t} \>| \Big] \, \Big( \sum_k s_k\Big)^{1/2}  A^{1/2}$$
and the $s=1/3$ case follows by rearrangement. 

To estimate the diagonal part, we have by symmetry
	\begin{align*}
	&  2  \sum_{k,j=1} \int_{T^*_k \times T^*_j \, : \, B^{-1} t \le t' \le Bt} |\<f , \phi_{y,\eta,t} \>|^2 \, |\<\phi_{y,\eta,t} , \phi_{y',\eta',t'} \>| \, dy d\eta dt\, dy' d\eta' dt' \\
	&\le 2A \sup_{k, (y,\eta,t) \in T^*_k} \left( \sum_{j} \int_{T^*_j \, : \, B^{-1}t \le t' \le Bt} |\<\phi_{y,\eta,t} , \phi_{y',\eta',t'} \>| \, dy' d\eta' \, dt'  \right) \\
	&\le 2A \sup_{k, (y,\eta,t) \in T^*_k} \left(   \int_{ B^{-1}t \le t' \le Bt} \int_{\R^2} |\<\phi_{y,\eta,t} ,  \phi_{y',\eta',t'}\>| \, dy' d\eta' \, dt' \right) 
	\end{align*}
where last part is due the disjointness of $T^*_j$. Using Claim~\ref{cl.diagonal}, the last display is clearly $O(A)$.  

We now focus on the off-diagonal terms. We will prove the desired estimate for the upper half and the lower half will follow by similar construction. Taking note that the partial tents $T^*_k$ are pairwise disjoint, apply Cauchy-Schwarz to obtain estimates
$$ \sum_{k,j} \int_{T^*_k \times T^*_j} \< f , \phi_{y,\eta,t} \>  \,\, \< \phi_{y,\eta,t} , \phi_{y',\eta',t'} \> \,\,  \< \phi_{y',\eta',t'} , f \> \, dy d\eta dt \, dy' d\eta' dt' \le A^{1/2}  \Big(\sum_k H_k \Big)^{1/2} $$
where
	\begin{align*}
	H_k &= \int_{T^*_k} \Bigg( \sum_{j} \int_{T^*_j \,:\, B t' \le t} |\< \phi_{y,\eta,t} , \phi_{y',\eta',t'}\> \,\, \<\phi_{y',\eta',t'} , f\>| \, dy' d\eta' dt' \Bigg)^2 \, dy d\eta dt \\
		&\le \bigg[\sup_{(y,\eta,t)\in \cup_k T^*_k} |\< f , \phi_{y,\eta,t} \>| \bigg]^2 \int_{T^*_k} \Bigg( \sum_{j} \int_{T^*_j \,:\, B t' \le t} |\< \phi_{y,\eta,t} , \phi_{y',\eta',t'}\>| \, dy' d\eta' dt' \Bigg)^2 \, dy d\eta dt
	\end{align*}
For simplicity in notation, let $D$ be the supremum above.  Using Claim~\ref{cl.off-diagonal}  above, we have 
	$$ H_k   \lesssim_{\phi,B,\beta}  D^2 \int_{T^*_k} \Big[ 1+\Big(\frac{s_k-|x_k-y|}{t}\Big) \Big]^{-2}  \, dy d\eta dt  $$
	$$ \lesssim_{\phi,B,\beta} D^2 \int_0^{s_k} \int_{x_k-s_k}^{x_k+s_k} \int_{\xi - C_1 t^{-1}}^{\xi+C_2 t^{-1}}\Big[ 1+\Big(\frac{s_k-|x_k-y|}{t}\Big) \Big]^{-2}  \, d\eta dy dt   \lesssim_{\phi,\Theta} D^2 s_k $$ 
Summing over all $k$ gives 
    $$\Big(\sum_k H_k\Big)^{1/2} \lesssim_{\phi} \Big[\sup_{(y,\eta,t)\in \cup_k T^*_k} |\< f , \phi_{y,\eta,t} \>| \Big] \, \Big( \sum_k s_k\Big)^{1/2} $$
and the desired off-diagonal estimate now follows. 

\textit{Case $s \in (0,1)$}.  Generalization to $s \in (0,1)$ is essentially the same argument as in Lemma~\ref{l.wellsep1} with appropriate  modifications. Recall $D = \sup_{(y,\eta,t)\in \cup_k T^*_k} |\< f , \phi_{y,\eta,t} \>| < \infty$. Subdivide the collection $E$ of partial tents into 
	$$ A_j = \Big\{ (y,\eta,t)\in \bigcup_{k} T^*_k \, : \, 2^{-(j+1)} D < |\< f, \phi_{y,\eta,t}\>|  \le 2^{-j} D \Big\} . $$
Denote $A_{\ge j} = \bigcup_{i \ge j} A_i$, and for convenience let   $A_{\ge j,k}=T^*_k\cap A_{\ge j}$. For fixed $j$, the sub-collection $\{A_{\ge j,k}\}$ of partial tents is  well-separated so by applying the $s=1/3$ estimate,
	$$ \Bigg(\sum_k \int_{A_{\ge j,k}} |\<f,\phi_{y,\eta,t}\>|^2 \, dy d\eta dt\Bigg)^{1/2} \lesssim_{\phi,B,\beta} 1 + \Bigg[\Big(  2^{-j} D\Big) \Big(\sum_{k}   s_k  \Big)^{1/2}\Bigg]^{1/3} . $$
Note the upper estimate above decays such that for  $j \ge \max(0, \log_2[D (\sum_{k} s_k)^{1/2} ])$,
	\begin{equation}\label{e.type2s_1}
	 \int_{A_{\ge j}} |\<f,\phi_{y,\eta,t}\>|^2 \, dyd\eta dt \lesssim 1 .
	 \end{equation}

To estimate each  individual level $A_j$, use Lemma~\ref{l.simplesquare} to obtain
	 $$2^{-(j+1)}D |A_j|^{1/2}\le \Big(\int_{A_j} |\<f,\phi_{y,\eta,t}\>|^2 \, dy d\eta dt\Big)^{1/2}  \lesssim_{\phi,B,\beta} 1 +\left[\Big(  2^{-j} D\Big) |A_j|^{1/2}\right]^{1/3} . $$
From this estimate, it follows immediately that $2^{-j}D|A_j|^{1/2} \lesssim 1$, and consequently,
	\begin{equation}\label{e.type2s_2}
	\Big(\int_{A_j} |\<f,\phi_{y,\eta,t}\>|^2 \, dy d\eta dt\Big)^{1/2}  \lesssim 1.
	\end{equation}

Fix the smallest integer $j\ge \max(0, \log_2[D (\sum_{k} s_k)^{1/2} ])$. By  \eqref{e.type2s_1} and \eqref{e.type2s_2}, we obtain the following logarithmic estimate spanning all of $ \bigcup_k T_k^* =  \bigcup A_j$.
 	\begin{align*}
 	 \sum_k \int_{T_k^*} |\<f,\phi_{y,\eta,t}\>|^2 \, dy d\eta dt &= \int_{A_{\ge j}} |\<f,\phi_{y,\eta,t}\>|^2 \, dy d\eta dt + \sum_{k=1}^{j-1} \int_{A_k} |\<f,\phi_{y,\eta,t}\>|^2 \, dy d\eta dt \\
 	 	&\lesssim_{\phi,B,\beta} 1 +   \max\Big(0, \log_2[D (\sum_{k} s_k )^{1/2}]\Big).
 	 \end{align*}
The desired result for $s \in (0,1)$ is then obtained by 
using the same logarithmic argument as in the conclusion of Lemma~\ref{l.wellsep1}.
\end{proof}

\section{Proof of Theorem~\ref{t.wembed}}\label{s.MainThmproof}

We start with reductions which reduces the proof of Theorem~\ref{t.wembed} and make some useful observations. The outline of the proof will be then stated in Section~\ref{subs.proofidea} followed by the specifics of the proofs in Sections~\ref{subs.weakLinfty}-\ref{subs.weakLq}.

\subsection{Preliminary reductions and observations }\label{subs.reduct}

\subsubsection{Reduction - Discrete Parameter Tents}

Following \cite{dt2015}, we pass our tent paramterizations to a discrete subset of $X$ and prove Theorem~\ref{t.wembed} in the discrete parameter setting. The  chosen discrete subset is chosen such that tents in $\E$ are \textit{centrally contained} within the discrete parameter tents. Note that a point $(x',\xi',s') \in T(x,\xi,s)$ is centrally contained if  the following inequalities hold:
\begin{equation}\label{e.centralcontain}
	2^{-3}s \le s' \le 2^{-2}s \, , \qquad  |x'-x|\le 2^{-4}s \, , \qquad  |\xi'-\xi|\le 2^{-8} bs^{-1}.
\end{equation}

Consider the subset $X_{\Delta}$  of points $(x,\xi,s)\in X$ such that there exists integers $k,m,n$ satisfying 
$$x=2^{k-4}n, \ \ \xi=2^{-k-8}bm, \ \ s=2^k.$$
Let $\E_{\Delta}$ be the collection of all tents $T(x,\xi,s)$ with $(x,\xi,s)\in X_{\Delta}$.
We recall the following relation \cite[Lemma 5.2]{dt2015} regarding tents in $\E$ and $\E_\Delta$.
\begin{lemma}\label{l.centralcontain} 
For any $(x',\xi',s')\in X$ there exists $(x,\xi_-, s)$, $(x,\xi_+,s) \in X_{\Delta}$, such that $(x',\xi',s')$ is centrally contained in the tents $T(x,\xi_-s)$, $T(x,\xi_+,s)$ as specified in \eqref{e.centralcontain} and satisfy 
$$T(x',\xi',s') \subset T(x,\xi_-,s) \cup T(x,\xi_+,s),$$
$$T(x',\xi',s') \cap T^b(x,\xi_-,s)\cap T^b(x,\xi_+,s) \subset T^b(x', \xi',s').$$
\end{lemma}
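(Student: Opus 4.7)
The strategy is to construct the three discrete parameters $s$, $x$, and $\xi_\pm$ greedily from $(x',\xi',s')$ and then verify each of the three conclusions in turn. First, let $k$ be the smallest integer with $2^k \ge 4s'$ and set $s := 2^k$; this forces $4s' \le s \le 8s'$, which is exactly the range $2^{-3}s \le s' \le 2^{-2}s$. Next, choose $n \in \Z$ so that $x := 2^{k-4} n$ is the closest $2^{k-4}\Z$-grid point to $x'$, yielding $|x-x'| \le 2^{k-5} \le 2^{-4} s$. Finally, take $\xi_-$ and $\xi_+$ to be the grid points of $2^{-k-8} b \Z$ immediately below and above $\xi'$ respectively (coinciding if $\xi'$ itself lies on the grid); both then satisfy $|\xi_\pm - \xi'| \le 2^{-k-8}b = 2^{-8}bs^{-1}$, and crucially $\xi_- \le \xi' \le \xi_+$. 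The three bounds just listed are exactly the central-containment relations \eqref{e.centralcontain} for both $(x,\xi_\pm,s)$.

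Next I would verify the tent inclusion $T(x',\xi',s') \subset T(x,\xi_-,s) \cup T(x,\xi_+,s)$. Take $(y,\eta,t) \in T(x',\xi',s')$. The time–space conditions hold for both candidate tents: $t < s' \le s$, and
\[
|y-x| \le |y-x'| + |x'-x| < (s'-t) + 2^{-4}s \le s-t,
\]
using $s' \le s/4$. For the frequency coordinate, the interval $(\xi'-C_1/t,\xi'+C_2/t)$ must be covered by $(\xi_- - C_1/t,\xi_- + C_2/t) \cup (\xi_+ - C_1/t,\xi_+ + C_2/t)$. The two outer endpoints are handled automatically by $\xi_- \le \xi' \le \xi_+$, and the only nontrivial requirement is that the two intervals overlap, i.e.\ $\xi_+ - \xi_- \le (C_1+C_2)/t$. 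Since $\xi_+ - \xi_- \le 2^{-8}bs^{-1}$ while $t < s' \le s/4$ forces $(C_1+C_2)/t > 4(C_1+C_2)/s > 4b/s$, this is comfortable.

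The step I expect to be the most delicate is the core inclusion, because a bare triangle inequality just barely fails: from $|\eta - \xi_\pm| \le b/t$ and $|\xi_\pm - \xi'| \le 2^{-8}b/s$ one would only conclude $|\eta-\xi'| \le (1+2^{-8})b/t$, which is slightly too big. The trick is to exploit the sandwich $\xi_- \le \xi' \le \xi_+$ rather than estimating each discrepancy separately. Indeed, if $(y,\eta,t) \in T(x',\xi',s') \cap T^b(x,\xi_-,s) \cap T^b(x,\xi_+,s)$, then $\eta \le \xi_- + b/t$ gives
\[
\eta - \xi' \le (\xi_- - \xi') + b/t \le b/t,
\]
and $\eta \ge \xi_+ - b/t$ gives $\eta - \xi' \ge (\xi_+ - \xi') - b/t \ge -b/t$. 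Hence $|\eta-\xi'| \le b/t$, placing $(y,\eta,t)$ in $T^b(x',\xi',s')$ and completing all three assertions.
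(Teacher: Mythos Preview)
Your proof is correct. The construction of $s$, $x$, and $\xi_\pm$ is exactly what is needed, and your verification of each of the three conclusions is complete. In particular, the observation that the sandwich $\xi_- \le \xi' \le \xi_+$ is what makes the core inclusion go through (avoiding the loss of $2^{-8}b/s$ that a bare triangle inequality would incur) is the right idea.

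As for comparison: the paper does not prove this lemma at all. It is stated as a recalled fact from \cite[Lemma~5.2]{dt2015}, so there is no argument in the present paper to compare against. Your write-up is essentially the argument one finds in \cite{dt2015}; the only cosmetic difference is that there one sometimes allows $\xi_\pm$ to be either of the two nearest grid points (with $\xi_- \le \xi_+$), but your choice of the floor and ceiling grid points works just as well and makes the sandwich inequality transparent.
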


Let $\s^w_\Delta$, $\sigma^w_\Delta$, and  $\mu^w_\Delta$ be  $S^w$, $\sigma^w$, and $\mu^w$ respectively restricted to the generating sub-collection $\E_\Delta$.  Given $T \in \E$, there exist $T^+,T^- \in \E_\Delta$ satisfying $T \subset T^+ \cup T^-$ and $$\sigma^w(T) \le \sigma^w_{\Delta}(T^+)+\sigma^w_{\Delta}(T^-) \le C \sigma^w(T)$$
where right-most inequality uses the doubling property of $w$. This implies the outer measures $\mu^w$ and $\mu^w_{\Delta}$ are equivalent. Furthermore, given $F \in \mathcal{B}(X)$ and tent $T \in \E$, application of $T^+,T^- \in \E_\Delta$ as above shows $$ \s^w(F)(T) \le C \Big( \s^w_\Delta(F)(T^+) + \s^w_\Delta(F)(T^-)\Big)$$
where $C$ is dependent on the doubling constant of $w$.

We therefore have an equivalence of the outer $L^p$ spaces and weak outer $L^p$ spaces $$\mathcal{L}^p(X,\sigma^w,\s^w) \sim \mathcal{L}^p(X,\sigma^w_\Delta,\s^w_\Delta), \qquad \mathcal{L}^{p,\infty}(X,\sigma^w,\s^w) \sim \mathcal{L}^{p,\infty}(X,\sigma^w_\Delta,\s^w_\Delta)$$ for $1 \le p \le \infty$. Thus, to prove Theorem~\ref{t.wembed}, it suffices to establish the following theorem with respect to the discrete parameter setting.  

\begin{theorem}\label{t.wembed-d}
Let $\phi \in \mathscr{S}(\R)$ such that $\supp \widehat{\phi} \subset (-2^{-8}b,2^{-8}b)$. Given a locally integrable function $f$ on $\R$, let $P(f)$ be the wave packet transform \eqref{e.3DPoisson}.  Suppose $2<q<\infty$ and $w \in \A_{q/2}$. Then
$$\|P(f)\|_{\mathcal L^{q}(X, \sigma^w_\Delta , \s^w_{\Delta})} \lesssim_{\Theta,\phi,q,[w]_{\A_{q/2}}} \|f\|_{L^{q}(w)}.$$
\end{theorem}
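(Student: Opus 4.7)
The plan is to apply the outer Marcinkiewicz interpolation theorem (Proposition~\ref{p.outerInterp}) between the endpoints $p = q_0$ and $p = \infty$. By the openness of the Muckenhoupt class, the hypothesis $w \in \A_{q/2}$ implies $w \in \A_{q_0/2}$ for some $q_0 \in (2,q)$; it therefore suffices to prove (a) the strong bound $\s^w(P(f))(T) \lesssim \|f\|_\infty$ uniformly in $T$, which yields $\|P(f)\|_{\mathcal{L}^\infty} \lesssim \|f\|_{L^\infty}$, and (b) the weak-type bound $\|P(f)\|_{\mathcal{L}^{q_0,\infty}(X,\sigma_\Delta^w,\s_\Delta^w)} \lesssim \|f\|_{L^{q_0}(w)}$.

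For endpoint (a), the supremum contribution to $\s^w$ over the core $T^b$ is immediately bounded by $\|\phi\|_1 \|f\|_\infty$ from the $L^1$-normalization of the wave packet. For the weighted square-function contribution over the lacunary shell $T^\ell$, I would exploit the frequency localization $\supp \widehat{\phi} \subset (-2^{-8}b, 2^{-8}b)$, which shifts the wave packets parameterized by $(y,\eta,t) \in T^\ell$ into frequency bands disjoint from those indexed by the core, and combine this with Schwartz decay of $\phi$ and the elementary Fubini bound $\int_{|y-x|<s-t} w(y-t,y+t)\, dy \lesssim t \cdot w(x-s,x+s)$ that follows from the doubling property of $w$.

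For endpoint (b), fix $\lambda > 0$ and $f$ with $\|f\|_{L^{q_0}(w)} = 1$. The plan is to construct a Borel exceptional set $E = E_{\text{core}} \cup E_{\text{lac}} \subset X$ with $\mu^w(E) \lesssim \lambda^{-q_0}$ such that $\s^w(P(f)\, 1_{X \setminus E})(T) \lesssim \lambda$ on every tent. The set $E_{\text{core}}$ handles the $L^\infty$ contribution: Schwartz decay of $\phi$ gives the pointwise bound $|P(f)(y,\eta,t)| \lesssim M_2 f(y)$, and since $w \in \A_{q_0/2}$ the operator $M_2$ is bounded on $L^{q_0}(w)$; taking $E_{\text{core}}$ to be a suitable union of tents sitting above $\{M_2 f > c\lambda\}$ gives $\mu^w(E_{\text{core}}) \lesssim \lambda^{-q_0}$. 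The set $E_{\text{lac}}$ is produced by a greedy stopping-time selection of maximal tents $\{T_k\}$ whose lacunary size, restricted to the complement of previously selected regions, exceeds $\lambda$. Maximality of the selection, combined with the frequency support of $\phi$, forces the associated family of partial tents $\{T_k^*\}$ to be well-separated in the sense of Section~\ref{subs.wellsep}, so Lemma~\ref{l.wellsep2} becomes applicable.

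The main technical step, and the principal obstacle, is then to convert the lacunary-size lower bound
\[
\lambda^2 \sum_k w(x_k-s_k,\, x_k+s_k) \;\lesssim\; \sum_k \int_{T_k^*} |P(f)(y,\eta,t)|^2 \, w(y-t,y+t)\, dy\, d\eta\, \frac{dt}{t}
\]
into an upper bound of the order $\|f\|_{L^{q_0}(w)}^{q_0} \lambda^{2-q_0}$, which would yield $\mu^w(E_{\text{lac}}) \lesssim \lambda^{-q_0}$. Lemma~\ref{l.wellsep2} is stated for the unweighted $L^2(\R)$ norm on the side of $f$, whereas the weighted size $\s^w$ carries the non-translation-invariant measure $w(y-t,y+t)\, dy\, d\eta\, dt/t$ on $X$. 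Bridging this gap will require combining the unweighted restriction estimate with the reverse Hölder and $A_\infty$ self-improvement properties of $w \in \A_{q_0/2}$, and using the pointwise bound $|P(f)| \lesssim \lambda$ valid off $E_{\text{core}}$ to trade two powers of $|P(f)|$ for a factor of $\lambda^{2-q_0}\,|P(f)|^{q_0}$ before summing. This weighted restriction-plus-interpolation passage is the technical heart of the remainder of the proof.
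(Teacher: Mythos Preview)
Your overall architecture---openness of $\A_{q/2}$ to drop to some $q_0<q$, then Marcinkiewicz interpolation between an $L^\infty$ endpoint and a weak $L^{q_0}$ endpoint, with the weak endpoint proved by excising an exceptional set $E$ and controlling its outer measure---matches the paper. The $L^\infty$ endpoint also goes through essentially as you describe. But the weak endpoint has two genuine gaps.

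\textbf{The core exceptional set.} Your $E_{\mathrm{core}}$ is a union of tents sitting above $\{M_2 f>c\lambda\}\subset\R$. The difficulty is that the region $\{(y,\eta,t):M_2f(y)>c\lambda\}$ is unbounded in the frequency variable $\eta$, and a single tent $T(x,\xi,s)$ only covers frequencies in a window of width $\sim t^{-1}$ at scale $t$. Covering an entire vertical strip over an interval $I_j$ therefore requires infinitely many tents with the same spatial base and varying $\xi$, so $\mu^w(E_{\mathrm{core}})=\infty$. The paper avoids this by first reducing to $\widehat f$ compactly supported and then running a selection algorithm directly on the points $(y_k,\eta_k,t_k)$ where $|P(f)|>\lambda$: each such point is covered by one tent, and the total $\sum_k w(I_k)$ is controlled not by a maximal function bound but by a sharp-maximal / good-$\lambda$ argument (Lemmas~\ref{l.L1byLinfty}--\ref{l.goodlambda}) that exploits the well-separation of the selected points through Lemma~\ref{l.wellsep1}.

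\textbf{The lacunary exceptional set.} Your proposed bridge---use $|P(f)|\le\lambda$ off $E_{\mathrm{core}}$ to replace $|P(f)|^2$ by $\lambda^{2-q_0}|P(f)|^{q_0}$---goes the wrong way: for $q_0>2$ and $|P(f)|\le\lambda$ one has $|P(f)|^{q_0}\le \lambda^{q_0-2}|P(f)|^2$, i.e.\ $|P(f)|^2\ge \lambda^{2-q_0}|P(f)|^{q_0}$, so this does not give an upper bound. The paper's route is entirely different. It introduces a square function $S$ built from the selected partial tents $T_k^*$, passes from $\|S\|_{L^2(w)}$ to $\|S\|_{L^q(w)}$ by H\"older (using that $\mathrm{supp}\,S\subset\bigcup J_k$), and then to $\|S^{\#}\|_{L^q(w)}$ via the Fefferman--Stein sharp maximal inequality. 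The unweighted restriction estimate of Lemma~\ref{l.wellsep2} is used locally, on each dyadic interval $J$, to get the pointwise bound
\[
S^{\#}(x)\ \lesssim\ M_2 f(x)\ +\ \lambda^s\,[M(N)(x)]^{s/2}\,[M_2 f(x)]^{1-s},
\]
where $N=\sum_k 1_{J_k}$ is the counting function. Taking $L^q(w)$ norms and using $w\in\A_{q/2}$ produces a bootstrap inequality in $\|N\|_{L^1(w)}$ that closes for $s$ small; a subsequent good-$\lambda$ argument (Lemma~\ref{l.goodlambda}) removes the residual dependence on $\|N\|_\infty$. This sharp-maximal mechanism is the missing idea in your proposal, and without it there is no way to pass from the unweighted $L^2$ restriction input to the weighted conclusion.
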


\subsubsection{Useful Observations}

\begin{remark}[Tent Containment]\label{r.obs_enlarge}
The proof of Theorem~\ref{t.wembed-d} requires us to pass from a collection of selected tents to another while maintaining well separation in the style of Section~\ref{s.wellsep_lebesgue}. To that end, we use the following geometric observation regarding tents. 

Given $x' \in \R$ and $s'>0$, denote the triangular strip
$$E_{x',s'} = \{ (y,\eta,t) \in X \, : \, 0<t<s', |y-x'|<s'-t\} $$
Consider a tent $T(x,\xi,s) \in \E$ such that $T(x,\xi,s) \cap E_{x',s'}$ is nonempty. Then the intersection itself is a 3D tent $T(x'',\xi,s'') \in \E$ where $(x''-s'',x''+s'') = (x-s,x+s) \cap (x'-s',x'+s')$.  
Figure~\ref{f.tentcontain} illustrates this intersection. As the frequency parameters play no role in this observation, the containment extends to the core and lacunary partial tents:
$$ T^b(x,\xi,s) \cap E_{x',s'} = T^b(x'',\xi,s'') \qquad \text{ and } \qquad T^\ell(x,\xi,s) \cap E_{x',s'} = T^\ell(x'',\xi,s'') .$$
 
\end{remark}

\begin{figure}
 \includegraphics[scale=0.35]{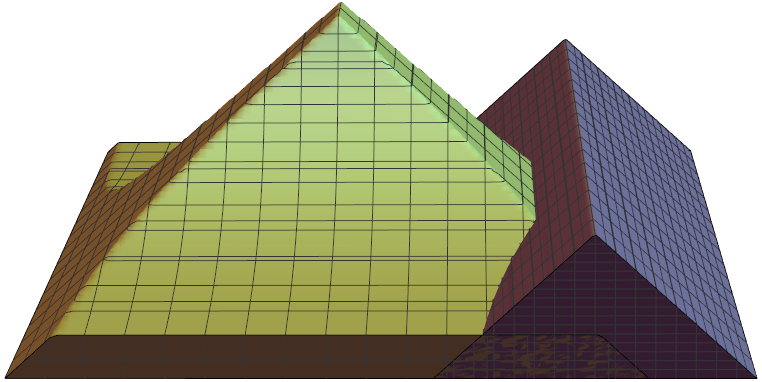}
 \hspace{1em}
 \includegraphics[scale=0.35]{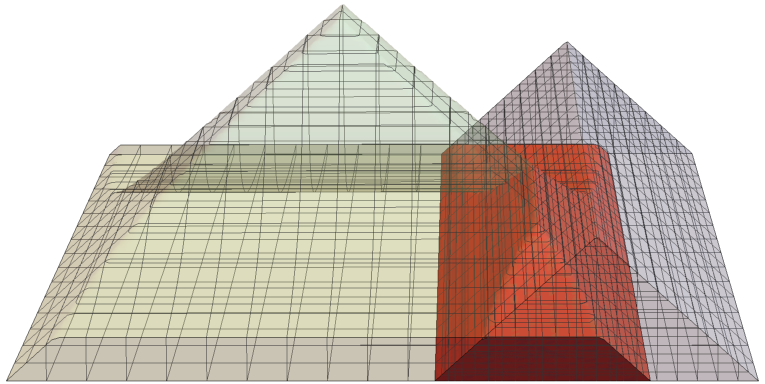}
\caption{The intersection of a 3D tent (in green) and a triangular strip (in blue) is another 3D tent (in red).}
\label{f.tentcontain}
\end{figure}

\begin{remark}[$\A_p$ weights are not $L^1$ integrable]\label{r.ApNotL1}
We briefly point out the standard fact that $\A_p$ weights are not integrable on $\R^n$.  Otherwise, reverse H\"older's inequality implies for all cubes $Q$
    $$ \int_Q \big[ w(x) \big]^r \, dx \lesssim_r  |Q|^{1-r} $$
where $r>1$ is dependent on $w$. Application of monotone convergence theorem shows $w=0$ a.e.\ which isn't an $\A_p$ weight. We state this remark as we haven't found another reference mentioning it.
\end{remark}

\begin{remark}[Shifted Dyadic Intervals]\label{r.3grids}
While the discrete parameter tents are not necessarily over dyadic intervals, we will implement another pass to work with dyadic grids on $\R$. The dyadic intervals we are interested are
    $$ \mathcal{D}_k = \left\{ \Big[2^{-j}(m+ (-1)^jk/3) \, , \, 2^{-j}(m+1+(-1)^jk/3)   \Big) \, : \, j,m \in \Z \right\} \qquad k \in \{0,1,2\} $$
where $\mathcal{D}_0$ is the standard dyadic grid, $\mathcal{D}_{1}$ is the 1/3 shifted grid, and  $\mathcal{D}_{2}$ is the 2/3 shifted grid.   We recall the three grids lemma (see \cite{christ1988}) where a finite interval $I \subset \R$ is comparable to a dyadic interval $J \in  \mathcal{D}_j$ for some $j \in \{0,1,2\}$ such that $I \subset J$ and $3 |I| \le |J| \le 6 |I|$. 
\end{remark}


\subsection{Structure of Proof}\label{subs.proofidea}
Fix $2<q<\infty$ and weight $w \in \A_{q/2}$. To prove Theorem~\ref{t.wembed-d}, it suffices to establish  the following weak outer $L^p$ estimates.
    \begin{align}
    &\| P (f)\|_{\mathcal{L}^{\infty}(X,\sigma^w_\Delta,\s^w_\Delta)} \lesssim_{\phi,q,[w]_{\A_{q/2}}} \| f\|_{L^\infty(w)}  \label{e.weakwembed-infty}  \\
    &\| P (f)\|_{\mathcal{L}^{q,\infty}(X,\sigma^w_\Delta,\s^w_\Delta)} \lesssim_{\phi,q,[w]_{\A_{q/2}}} \| f\|_{L^q(w)}  \label{e.weakwembed-q}      
    \end{align} 
If true, reverse H\"older's inequality  says \eqref{e.weakwembed-q} holds with $q$ replaced by  some $q-\epsilon$ and then invoke outer Marcinkiewicz interpolation (see Proposition~\ref{p.outerInterp}) to pass to the strong estimate at $q$ itself.  Moreover, it suffices to show \eqref{e.weakwembed-q}  holds for a Schwartz function $f$ on $\R$.  Indeed, given $f \in L^q(w)$ pick a sequence of Schwartz functions $f_k$ converging to $f$ in $L^q(w)$ norm satisfying
    \begin{itemize} 
        \item $\|f_1\|_{L^q(w)} \le C \|f\|_{L^q(w)}$ and 
        \item $\|f_{k+1}-f_{k}\|_{L^q(w)} \le C 2^{-10 k} \|f\|_{L^q(w)}$.
    \end{itemize} 
It is clear $P(f) = \lim_k P (f_k)$ pointwise and if we assume \eqref{e.weakwembed-q} holds for Schwartz functions,
    \[\|P(f_{k+1}) - P(f_k)\|_{\mathcal{L}^{q,\infty}(X,\sigma^w_\Delta,\s^w_\Delta)}  \lesssim \|f_{k+1}-f_k\|_{L^q(w)} \lesssim C 2^{-10 k} \|f\|_{L^q(w)}.\] 
The reduction follows by application of Proposition~\ref{p.outerDom} to the sequence $P(f_k)$. 

We prove the outer $L^\infty$ estimate \eqref{e.weakwembed-infty} in Section~\ref{subs.weakLinfty}  by appealing to Littlewood-Paley square function estimates to control each tent.  Section~\ref{subs.weakLq} handles the more complicated weak outer $L^{q}$ estimate \eqref{e.weakwembed-q}  by good-lambda type arguments restricted to tents with large size.  We remain in the discrete parameter setting for the rest of the paper, unless otherwise stated. As such, we drop the $\Delta$ notation and denote $\E = \E_\Delta$, $\s = \s^w_{\Delta}$, and $\mu = \mu^w_\Delta$.

\subsection{The outer $L^\infty$ embedding \eqref{e.weakwembed-infty}}\label{subs.weakLinfty}

Consider $f \in L^\infty(w)$. The goal is to show  
$$\s\big(P(f)\big)(T(x,\xi,s))  \le C\|f\|_{L^\infty(w)}$$
with implicit constant $C$ independent of  $(x,\xi,s)\in X_{\Delta}$ and $f$. As $\s$ is the sum of an $L^\infty$ size and an $L^2(w)$ size, we prove this inequality for each size separately. The estimate for the $L^\infty$ portion is clear from the definition of the wave packet transform. $$   \sup_{(y,\eta,t) \in T^b(x,\xi,s)} | \< f , \phi_{y,\eta,t}\>| \le \|f\|_{\infty} \|\phi\|_1 \le C \|f\|_{L^{\infty}(w)}. $$
    
It remains to control the $L^2(w)$ portion of size $\s$, 
    \begin{equation}\label{e.weakLinfty_s2}
    \int_{T^\ell(x,\xi,s)} |P(f)(y,\eta,t)|^{2} w(y-t,y+t)dy d\eta \frac{dt}{t} \le C w(x-s,x+s) \|f\|_\infty^{2} .
    \end{equation}
 Fix $(x,\xi,s) \in X_\Delta$ and consider the decomposition $f=f_1+f_2$ where $f_1 := f 1_{(x-2s,x+2s)}$.  For each $(y,\eta,t)$ such that  $y\in (x-s,x+s)$ and $t<s$, 
$$|P(f_2)(y,\eta,t)| \le \int_{[-s,s]^c} \big|f_2(y-z)  \frac 1 t \phi\Big(\frac z t\Big)\big| dz \le C \frac{t}{s}   \|f\|_\infty.$$
As $(y,\eta,t) \in T^\ell(x,\xi,s)$, we have the containment $(y-t,y+t) \subset (x-s,x+s)$. Therefore
$$ \left\|S_{T}\big(P(f_2)\big)\right\|_{L^2(w)}^2 = \int_{T^\ell(x,\xi,s)} |P(f_2)(y,\eta,t)|^{2} w(y-t, y+t)dy d\eta \frac{dt}{t} $$
$$\le C\int_0^s \int_{x-s}^{x+s} \int_{\xi - C_1 t^{-1}}^{\xi+C_2 t^{-1}} \Big(\frac{t}{s}   \|f\|_\infty \Big)^2  w(y-t, y+t) \, d\eta dy  \frac{dt}{t} \le C w(x-s,x+s) \|f\|_\infty^{2} $$
which establishes the desired estimate on $f_2$. 

For the $f_1$ piece, it suffices to show the square function estimate
\begin{equation}\label{e.squareTL2}
\left\|S_{T}\big(P (h) \big) \right\|_{L^q(w)}\le C_{\phi,q,[w]_{\A_{q/2}}}  \|h \|_{L^q(w)},
\end{equation}
for any $h \in L^q(w)$. As $S_{T}(P (f_1))$ is compactly supported, we can pass to \eqref{e.squareTL2} by applying H\"older's inequality to the left side of \eqref{e.weakLinfty_s2}. From there, appeal to the compact support of $f_1$ and doubling property of $w$ to control $\|f_1\|_{L^q(w)}$ by the desired result. 
By symmetry, we only need to establish \eqref{e.squareTL2} where $T^\ell$ is replaced by $T^\ell \cap \{\eta>\xi\}$. Consider a change in variables  $\eta=\xi+ \gamma /t$, and an absolute constant $C' > \max(C_1,C_2)$. It remains to show
$$\Big\| \Big(\int_0^\infty \int_{\R} \int_{b}^{C'} \big|P(h) (y,\xi+ \frac{\gamma}t, t) \big|^2   1_{|y-u|<t} d\gamma dy \frac{dt}{t^2} \Big)^{1/2} \Big\|_{L_u^q(w)} \le C \,  \|h\|_{L^q(w)}.$$ 
Let $g(u)=h(u) e^{-i \xi u}$ and  $M_\gamma \phi(z) = e^{i\gamma z}\phi(z)$. Since $b\le \gamma \le C'$ and $\widehat \phi$ is supported in $(-2^{-8}b,2^{-8}b)$, the frequency support of $M_\gamma \phi$ is  bounded away from $0$ and $\infty$ and  $M_\gamma \phi$ satisfies the usual decay estimates (where the implicit constant can be chosen uniformly over $b\le \gamma\le C'$). Observe that
$$ P (h)\big(y,\xi+ \frac{\gamma}t, t\big) = e^{i\xi y}  \Big(g* (M_\gamma \phi)_t(y)\Big)$$
where $(M_\gamma \phi)_t(z)= t^{-1} M_{\gamma}\phi(z/t)$.
Uniformly over $b \le \gamma \le C'$, we have 
	$$ \Big\| \Big(\int_0^\infty \int_{\R} \big|P (h)(y,\xi+\frac{\gamma}{t},t)\big|^{2}   1_{|y-u|<t}dy  \frac{dt}{t^2}\Big)^{1/2} \Big\|_{L_u^q(w)} = $$
	$$ = \Big\| \Big(\int_0^\infty \int_{\R} \big|g * (M_\gamma \phi)_t(y) \big|^{2}  1_{|y-u|<t} dy\frac{dt}{t^2}  \Big)^{1/2} \Big\|_{L_u^q(w)} \le C \|g\|_{L^q(w)}  = C\|h\|_{L^q(w)} $$
where the second-to-line inequality is a consequence of $L^q(w)$ boundedness for continuous square function estimates with $w \in \A_{q} \supset \A_{q/2}$; see \cite{lerner2011,lerner2014} for details. 
This establishes \eqref{e.squareTL2} and concludes the proof of the outer $L^\infty$ embedding \eqref{e.weakwembed-infty}.

\subsection{The weak outer $L^{q}$ embedding \eqref{e.weakwembed-q}}\label{subs.weakLq}

As mentioned, we may assume that $f$ is a Schwartz function on $\R$. Given $\lambda>0$ we need to find a countable collection of points $Q\subset X_\Delta$ such that
$$\sum_{(x,\xi,s)\in Q} w(x-s,x+s) \lesssim \lambda^{-q}\|f\|_{L^q(w)}^q,$$
and for every $T\in \E_\Delta$ we have, with $E:=\bigcup_{(x,\xi,s)\in Q} T(x,\xi,s)$,
\begin{equation}\label{e.weakLqsize}
\s\big( P(f)1_{X\setminus E}\big)(T)\le \lambda.    
\end{equation}   

We first reduce to the case when $\widehat f$ is compactly supported.  Indeed, we may select frequencies $\xi_0=0<\xi_1<\xi_1<\dots$  such that with $\widehat f_k=\widehat f1_{\xi_{k-1}\le |\xi|\le \xi_k}$, $k\ge 1$, satisfy
$$\|f_k\|_{L^q(w)} \le C 2^{-10k} \|f\|_{L^q(w)}.$$
Using the special case to each $f_k$ with $\lambda_k=2^{-k}\lambda$ we obtain collections $Q_k$, and clearly
$$ \sum_{k\ge 1} \sum_{(x,\xi,s)\in Q_k} w(x-s,x+s) \le C \lambda^{-q}   \sum_k  2^{qk}  \|f_k\|_{L^q(w)}^q \le C\lambda^{-q}\|f\|_{L^q(w)}^q. $$
On the other hand using subadditivity of the size, we may estimate
$$\s\big( P(f)1_{X\setminus E}\big)(T)\le \sum_{k\ge 1} \s\big(P(f_k)1_{X\setminus E}\big)(T) \le  \sum_{k\ge 1} 2^{-k}\lambda \le \lambda.$$
Thus from now on we may assume that $f$ is Schwartz such that $\widehat f$ is compactly supported.

\subsubsection{Treatment for the $L^\infty$ part of the size}\label{ssub:Linfsize}
 
We first isolate tents
$T \in \E_\Delta$ which contain all $(y,\eta,t) \in X$ such that $|P(f)(y,\eta,t)| > \lambda$. Note that by Cauchy-Schwarz,
\begin{equation}\label{e.Poisson_ptBd}
    |P(f)(y,\eta,t)| = \Big|\int_\R f(y-z) e^{i\eta z}\frac 1 t \phi\big(\frac  z t\big) dz \Big| \le  t^{-1/2} \|f\|_2 \|\phi\|_2  = O(t^{-1/2}),
\end{equation}
so if $|P(f)(y,\eta,t)|>\lambda$ then there is an a priori bound $t<O(\lambda^{-2})$. We remark this estimate is not essential; the upper bound is only needed to ensure that the $L^\infty$ selection algorithm described below will terminate and it will  never be used quantitatively. 

{\bf $\bm{L^\infty}$ Selection Algorithm}. Suppose there is some $(y,\eta,t) \in X$ such that $|P(f)(y,\eta,t)| > \lambda$.
By Lemma~\ref{l.centralcontain}, we can associate with $(y,\eta,t)$ a point $(x,\xi,s) \in X_\Delta$ such that $(y,\eta,t)$ is centrally contained in $T(x,\xi,s)$. In particular $s$ has to be bounded a priori by \eqref{e.Poisson_ptBd} and is always a power of $2$. We may therefore choose  $(y_1,\eta_1,t_1) \in X$ and $(x_1,\xi_1,s_1) \in X_\Delta$ so that  
 \begin{itemize}\itemsep0.15em
 \item $|P(f)(y_1,\eta_1,t_1)|>\lambda$, 
 \item $(y_1,\eta_1,t_1) \in T(x_1,\xi_1,s_1)$ centrally, and
 \item  $s_1$ is maximal.
 \end{itemize}
We iterate this process. Assume that we have already selected  $(y_k,\eta_k,t_k) \in X$ and $(x_k,\xi_k,s_k) \in X_\Delta$ for $1 \le k < n$. Suppose there is a point $(y,\eta,t) \in X$ outside the union of selected tents $T(x_k,\xi_k,s_k)$ satisfying $|P(f)(y,\eta,t)|>\lambda$. We now choose $(y_n, \eta_n, t_n) \in X$ and $(x_n,\xi_n,s_n) \in X_\Delta$ such that
\begin{itemize} \itemsep0.15em
    \item $(y_n,\eta_n,t_n)  \not\in \bigcup_{k=1}^{n-1} T(x_k,\xi_k,s_k)$,
    \item $|P(f) (y_n,\eta_n,t_n)|>\lambda$, 
    \item $(y_n,\eta_n,t_n) \in T(x_n,\xi_n,s_n)$ centrally, and 
    \item $s_n$ is maximal. 
\end{itemize}

Figure~\ref{f.type1sep} gives a visual representation of the selection.  For each $k \ge 1$, denote $T_k = T(x_k,\xi_k,s_k)$ and $I_k=(x_k-s_k, x_k+ s_k)$ for notation conveinence.
 
 \begin{figure}
 \begin{center}
 \includegraphics[scale=0.4]{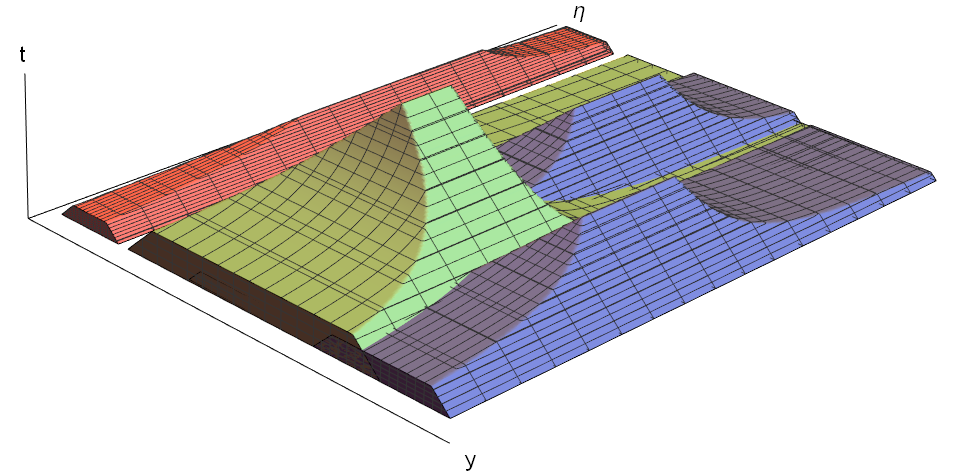}
 \caption{Example of tents from the $L^\infty$ selection algorithm. Here, the green tent (with largest scale) is selected first, followed by the two blue tents and finally the red tent (with smallest scale).}
 \label{f.type1sep}
 \end{center}
 \end{figure}
 
 Our goal is to show that
\begin{equation}\label{e.Linfty}
\sum_{k=1}^n w(I_k) \le C\lambda^{-q} \|f\|_{L^q(w)}
\end{equation}
for all $n$. Assuming \eqref{e.Linfty} holds, we justify the algorithm successfully terminates. If the algorithm finishes after $n$ steps, then all points $(y,\eta,t)$ outside the union of tents $T_k$ for $1 \le k \le n$ satisfy $|P(f)(y,\eta,t)|\le \lambda$. Suppose the algorithm doesn't terminate after a finite number of steps. We  observe the sequence of selected  heights $s_k$ must decay  to $0$ in this scenario. This is due to the selection process being independent of the chosen weight $w$ meaning \eqref{e.Linfty} would hold for the Lebesgue case $w=1$. 
Therefore, if we consider some  $(y,\eta,t)$ outside the union $\bigcup_{k \ge 1} T_k$  then $t > s_j$ for some $j$. As $T_j$ is the tallest tent at the $j$-step with respect to containing a centralized point whose wavelet projection is greater than $\lambda$, we conclude $|P(f)(y,\eta,t)|\le \lambda$.  Thus, up to the proof of \eqref{e.Linfty}, all points $(y,\eta,t)$ outside the union of the selected tents in the algorithm must satisfy $|P(f)(y,\eta,t)|\le \lambda$. 

It remains to prove weighted estimate \eqref{e.Linfty}.  We first make the following observation regarding the well separation of the selected points $(y_k,\eta_k,t_k)$. 
 \begin{claim}\label{cl.SAlg_Linfty_WellSep}
 The collection of points  $\{(y_k,\eta_k,t_k)\}_{k \ge 1}$ is well separated in the sense of \eqref{e.wellsep_pts} with separation constants $\alpha = 2^{-2}$ and $\beta = 2^{-6}b$; here, $b$ is the parameter in $\Theta = (C_1,C_2,b)$.
 \end{claim}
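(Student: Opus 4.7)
The plan is to verify the well-separation inequality \eqref{e.wellsep_pts} directly for any pair $k < j$, by exploiting the constraint $(y_j,\eta_j,t_j) \notin T_k$ built into the algorithm, together with the central-containment bounds \eqref{e.centralcontain}. The key preliminary observation is a monotonicity of scales: $s_k \ge s_j$ whenever $k < j$. Indeed, at step $k$ the point $(y_j,\eta_j,t_j)$ (to be selected later) already satisfies $|P(f)(y_j,\eta_j,t_j)|>\lambda$ and lies outside $\bigcup_{i<k}T_i\subseteq \bigcup_{i<j}T_i$, so it is a valid candidate at step $k$; since its centrally-containing discrete-parameter tents are the same at both steps, the maximality of $s_k$ forces $s_k\ge s_j$. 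Combined with the central-containment bounds $2^{-3}s_i\le t_i\le 2^{-2}s_i$ this yields $\max(t_j,t_k)\le 2^{-2}s_k$ and $\min(t_j,t_k)\ge 2^{-3}s_j$; in particular $t_j<s_k$.

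Because $t_j<s_k$, the failure of $(y_j,\eta_j,t_j)\in T_k$ must come either from the time slab, giving $|y_j-x_k|\ge s_k-t_j$, or from the frequency slab, giving $|\eta_j-\xi_k|\ge \min(C_1,C_2)/t_j>b/t_j$ (using $b<\min(C_1,C_2)$). In the first case, combine $|y_j-x_k|\ge s_k-t_j\ge s_k-2^{-2}s_k$ with $|y_k-x_k|\le 2^{-4}s_k$ from central containment to obtain
\[
|y_j-y_k|\ge (1-2^{-2}-2^{-4})s_k=\tfrac{11}{16}s_k,
\]
which comfortably exceeds $2^{-2}\max(t_j,t_k)\le 2^{-4}s_k$, verifying the spatial half of \eqref{e.wellsep_pts} with $\alpha=2^{-2}$.

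In the frequency case, use the central-containment bound $|\eta_k-\xi_k|\le 2^{-8}bs_k^{-1}\le 2^{-8}bs_j^{-1}$ together with $t_j\le 2^{-2}s_j$ to estimate
\[
|\eta_j-\eta_k|\ge \frac{b}{t_j}-\frac{2^{-8}b}{s_k}\ge \frac{4b}{s_j}-\frac{2^{-8}b}{s_j}\ge \frac{3b}{s_j}.
\]
This dominates $\beta\max(t_j^{-1},t_k^{-1})\le 2^{-6}b\cdot 2^{3}/s_j=2^{-3}b/s_j$ with $\beta=2^{-6}b$, establishing the frequency half of \eqref{e.wellsep_pts}.

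The only genuine subtlety is book-keeping the relative sizes of $s_j$ versus $s_k$ and of $t_j$ versus $t_k$ when translating $\max$ and $\min$; the monotonicity step above resolves this once and for all, after which the argument reduces to the two straightforward triangle-inequality chases above. No deeper ingredient (such as any property of the wave packets or the weight) is needed: the claim is purely a geometric consequence of the selection rule and of the discrete-tent parameterization with $\delta=2^{-8}b$ fixed in the setup.
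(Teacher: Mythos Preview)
Your proof is correct and follows essentially the same approach as the paper's. The paper argues by contradiction---assuming both separation conditions fail and showing the later point must lie in the earlier tent---while you argue directly via the contrapositive, splitting on which tent-membership condition fails; the ingredients (scale monotonicity $s_k\ge s_j$, the central-containment bounds \eqref{e.centralcontain}, and the triangle inequality) are identical, and you have the added virtue of spelling out the monotonicity step that the paper uses without explicit justification.
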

 
To verify the claim, consider $(y_j,\eta_j,t_j)$, $(y_k,\eta_k,t_k)$ such that $j < k$. By the selection algorithm,  $(y_j,\eta_j,t_j) \in T_j$ was selected prior to $(y_k,\eta_k,t_k) \in T_k$.  Suppose to the contrary that 
    $$|\eta_j - \eta_k| \le 2^{-6} b \max(t_j^{-1},t_k^{-1}) \quad \text{ and } \quad |y_j - y_k| \le 2^{-2} \max(t_j,t_k).$$
Central containment and $s_j \ge s_k$ implies $|y_j - y_k| \le 2^{-4} s_j$ and $|\eta_j - \eta_k| \le 2^{-3}b s_k^{-1}$. Thus,
	$$|y_k- x_j| \le |y_k - y_j| + |y_j - x_j| \le 2^{-3} s_j \le  s_j - t_k$$ 
and
	$$ \eta_k - \xi_j = (\eta_k - \eta_j) + (\eta_j - \xi_j) \le t_k^{-1} 2^{-4}b \le t_k^{-1}C_2 $$
with similar work gives $\eta_k -\xi_j \ge -t_k^{-1}C_1$. As $t_k < s_k \le s_j$, this means $(y_k,\eta_k,t_k) \in T_j$ which contradicts the assumption the point was selected after $T_j$ was removed. Thus, Claim~\ref{cl.SAlg_Linfty_WellSep} is true.

By the three grids trick (see Remark~\ref{r.3grids}),  each $I_k$ is contained in a dyadic interval $J_k$ of comparable length where $J_k$ is either in the standard dyadic grid $\mathcal{D}_0$,  the $1/3$ shifted grid $\mathcal{D}_1$, or the $2/3$ shifted grid $\mathcal{D}_2$. It suffices to prove 
\begin{equation}\label{e.Linfty_dyadic}
\sum_{k =1}^n w(J_k) \le C\lambda^{-q} \|f\|_{L^q(w)}
\end{equation}
where $J_k$ is an element of $\mathcal{D}_0$, $\mathcal{D}_1$, or $\mathcal{D}_2$  such that $I_k \subset J_k$ and $3 |I_k| \le |J_k| \le 6 |I_k|$. Without loss of generality, we may assume all $J_k$ belong to the same dyadic grid, which for convenience we assume to be the standard grid $\mathcal{D}_0$. 

For each $m\ge 0$ let $K_m$ be the set of $k$ such that $2^m \lambda \le |P(f)(y_k,\eta_k, t_k)| \le 2^{m+1}\lambda$. It suffices to show that for any $m$
$$\sum_{k\in K_m} w(J_k)  \le C (2^m\lambda)^{-q } \|f\|_{L^q(w)}^q.$$
We show this for $m=0$ as the general case can be obtained by simply letting $\lambda'=2^m\lambda$ and repeating the argument.  Fixing $m=0$, assume without loss of generality that $K=K_0$ so all $k$ are automatically inside $K_0$.

For convenience let
    $$N=\sum_k 1_{J_k}  \qquad \quad \text{ and } \qquad  N_I = \sum_{k: J_k\subset I} 1_{J_k} $$
be the tent counting function and counting function restricted to interval $I$ respectively. The following lemma is a localization inequality for the $L^1(w)$ norm of $N_I$.
 
\begin{lemma}\label{l.L1byLinfty} Fix a  dyadic interval $I$, $\alpha>0$, and integer $n \ge 0$. Let $\chi_I(x)$ be as defined in \eqref{e.localization}. Then
    $$\lambda \|N_I\|_{L^1(w)}^{1/q} \lesssim_{n,\alpha} \|N_I\|_{\infty}^\alpha \|f  \chi_I^n\|_{L^q(w)}.$$
\end{lemma}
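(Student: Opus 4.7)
The plan is to reduce the desired inequality to a weak-type estimate for a weighted maximal function through a pointwise domination. Specifically, I would establish that for every $k$ and every $x \in J_k$,
\[ \lambda \;<\; |P(f)(y_k,\eta_k,t_k)| \;\lesssim_n\; M_2(f\chi_I^n)(x), \]
where $M_2 g = (M|g|^2)^{1/2}$ is the $L^2$-maximal operator. Since $w \in \A_{q/2}$, $M_2$ is bounded on $L^q(w)$ (as $\|M_2 g\|_{L^q(w)} = \|M|g|^2\|_{L^{q/2}(w)}^{1/2} \lesssim \|g\|_{L^q(w)}$ by boundedness of $M$ on $L^{q/2}(w)$); in particular Chebyshev yields $w(\{M_2(f\chi_I^n) > c\lambda\}) \lesssim \lambda^{-q}\|f\chi_I^n\|_{L^q(w)}^q$.

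To prove the pointwise bound, I would first apply Cauchy--Schwarz and the Schwartz decay of $\phi$ to obtain
\[ |P(f)(y_k,\eta_k,t_k)|^2 \;\lesssim_N\; \int_{\R} |f(z)|^2 \,\tfrac{1}{t_k}\bigl(1+|y_k-z|/t_k\bigr)^{-N}\, dz \]
for any large $N$. Then insert $\chi_I^{2n}\cdot\chi_I^{-2n}$ inside; the key analytic input is the pointwise comparison $\chi_I(z)^{-2n} \lesssim (1+|y_k-z|/t_k)^{N/2}$, which follows from $y_k \in I$ and $t_k \le |I|$ (both guaranteed by the central containment \eqref{e.centralcontain}) once $N$ is chosen sufficiently large relative to $n$. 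Decompose the resulting integral into the dyadic annuli $L_j := (y_k - 2^j t_k, y_k + 2^j t_k)$; note that central containment also forces $|J_k|/t_k \le C$ for an absolute $C$. For indices $j$ with $2^j t_k \ge |J_k|$, the interval $L_{j+1}^{x} := (x - 2^{j+1} t_k, x + 2^{j+1} t_k)$ contains $L_j$ at comparable scale, so that the annular average is bounded by $M_2(f\chi_I^n)(x)^2$. The finitely many smaller $j$ are handled by a single average over $2 J_k$ up to bounded loss. Summing the geometric series produces the claimed pointwise bound.

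Given the pointwise domination, each $J_k \subset E_\lambda := \{M_2(f\chi_I^n) > c_n\lambda\}$, hence
\[ \|N_I\|_{L^1(w)} \;=\; \int_I N_I(x)\,w(x)\,dx \;\le\; \|N_I\|_{\infty}\, w(E_\lambda) \;\lesssim_n\; \|N_I\|_{\infty}\,\lambda^{-q}\,\|f\chi_I^n\|_{L^q(w)}^q. \]
This yields the inequality with the exponent $\alpha = 1/q$; since $\|N_I\|_{\infty} \ge 1$ in the nontrivial case, the conclusion with any $\alpha \ge 1/q$ follows with the same constant. The refinement to arbitrary $\alpha \in (0,1/q)$ would proceed by interpolating the above with the Lebesgue $L^2$ restriction estimate Lemma~\ref{l.wellsep1}, applied to the well-separated family $\{(y_k,\eta_k,t_k)\}$ from Claim~\ref{cl.SAlg_Linfty_WellSep}, absorbing a blow-up into the constant $C_{n,\alpha}$ as $\alpha \to 0$.

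The main obstacle is the pointwise domination step: one must carefully verify the comparison $\chi_I(z)^{-2n} \lesssim (1+|y_k-z|/t_k)^{N/2}$ in the regime where $|z-c_I|$ and $|y_k - z|$ are both large but potentially on very different scales, and confirm that the ratio $|J_k|/t_k$ is bounded uniformly by an absolute constant under the central-containment definition so that only $O(1)$ "small-scale" annuli require the lossy comparison to $M_2(f\chi_I^n)(x)$.
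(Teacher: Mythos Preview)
Your pointwise domination $|P(f)(y_k,\eta_k,t_k)|\lesssim_n M_2(f\chi_I^n)(x)$ for $x\in J_k$ is correct, and it does give the lemma with $\alpha=1/q$. The problem is that $\alpha=1/q$ is the trivial exponent, and it is precisely the range $0<\alpha<1/q$ that is needed downstream: in Corollary~\ref{c.LrbyLinfty} one sets $r=1-\alpha q$, and the argument requires $r\in(0,1)$; with $\alpha=1/q$ the sum over $k$ there diverges. So the part you have actually proved is not usable, and the part that matters is deferred to a one-line remark about ``interpolating with Lemma~\ref{l.wellsep1}.'' That remark is not a plan: Lemma~\ref{l.wellsep1} is a Lebesgue $L^2$ statement, your estimate is a weighted $L^q(w)$ statement, and there is no interpolation mechanism on offer that bridges them. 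More fundamentally, your pointwise bound treats each $(y_k,\eta_k,t_k)$ in isolation and never uses the well-separation of the family, which is exactly the structure that allows one to beat the trivial exponent.

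The paper's proof is genuinely different. It does not estimate $|P(f)(y_k,\eta_k,t_k)|$ pointwise; instead it forms the square function $S(f)(x)=\big(\sum_k |\langle f,\phi_k\rangle|^2 1_{J_k}(x)\big)^{1/2}$, passes via H\"older and the sharp maximal inequality to $\|(S(f))^\#\|_{L^q(w)}$, and then uses Lemma~\ref{l.wellsep1} \emph{locally on each dyadic interval $J$} to control the oscillation of $S(f)$. This is where well-separation enters and produces the pointwise bound
\[
(S(f))^\#(x)\lesssim M_2(f)(x)+\lambda^s\,[M(N)(x)]^{s/2}\,[M_2(f)(x)]^{1-s}
\]
with a free parameter $s\in(0,1)$. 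After taking $L^q(w)$ norms and rearranging (the bootstrap absorbs $\|N\|_{L^1(w)}^{s/q}$ into the left side), one obtains $\lambda\|N\|_{L^1(w)}^{1/q}\lesssim \|N\|_\infty^{\frac{s}{1-s}(\frac12-\frac1q)}\|f\|_{L^q(w)}$, and choosing $s$ small makes the exponent $\alpha$ arbitrarily small. The sharp-maximal step is the essential idea you are missing.
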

\proof Fix interval $I$, without loss of generality we may assume $N=N_I$. We first consider the simpler proof for $n=0$. For convenience, denote  $P(f)(y_k,\eta_k,t_k) = \<f,\phi_k\>$ where 
$$\phi_k(x)\equiv \phi_{y_k,\eta_k,t_k}(x)  = \frac 1 {t_k}e^{-i\eta_k(y_k-x)} \overline{\phi\Big(\frac{y_k-x}{t_k}\Big)}.$$
As $\lambda \le |P(f)(y_k,\eta_k,t_k)| \le 2 \lambda$, we obtain
$$\lambda^2 \sum_k w(J_k) \lesssim \sum_{k\in K_m} w(J_k) |P(f)(y_k,\eta_k,t_k)|^2 = \|S (f)\|_{L^2(w)}^2$$
where 
$$S (f)(x):=\Big(\sum_{k} |\<f,\phi_k\>|^2 1_{J_k}(x) \Big)^{1/2}$$
is the square function summed over the selected points. 

We now implement a sharp maximal inequality argument. Note first that $supp\big(S(f)\big)\subset supp(N)$. Using H\"older's inequality and the sharp maximal inequality we have
$$  \big\|S(f)\big\|_{L^2(w)}  \le w\big(supp(N)\big)^{\frac{1}{2}-\frac{1}{q}} \big\|S(f)\big\|_{L^q(w)} \lesssim \Big(\sum_k w(J_k)\Big)^{\frac{1}{2}-\frac{1}{q}} \big\|(S(f))^{\#}\big\|_{L^q(w)} $$
where $(S(f))^{\#}$ is the dyadic sharp maximal function, taken  with respect to the  grid that contains all intervals $J_k$. Consequently,

\begin{equation}\label{e.Linfty_est1}
    \lambda \Big[\sum_k w(J_k)\Big]^{1/q}  \lesssim   \big\|(S(f))^{\#}\big\|_{L^q(w)}.
\end{equation}

We will now establish the pointwise estimate
\begin{equation}\label{e.sharpLinfty}
(S(f))^{\#}(x) \lesssim_{\phi,s} M_2(f)(x) + \lambda^s    [M(N)(x)]^{s/2}    [M_2(f)(x)]^{1-s}
\end{equation}
where $M$ is the usual dyadic maximal function and $s \in (0,1)$. For each dyadic interval $J$ it suffices to show there is some constant $\alpha_J>0$ such that
\begin{equation*}
\left(\frac{1}{|J|} \int_J |S(f)(x)-\alpha_J|^2 dx \right)^{1/2} \lesssim \inf_{x\in J} M_2(f)(x) + \lambda^s  \inf_{x\in J} [M(N)(x)]^{s/2}  \inf_{x\in J} [M_2(f)(x)]^{1-s}
\end{equation*}
Setting $c_J$ as the center of $J$, let $\alpha_J = S_{out} (f)(c_J)$ where 
$$S_{out}(f)(x):=\Big(\sum_{k: J\subset J_k} |\<f,\phi_k\>|^2 1_{J_k}(x)\Big)^{1/2}.$$
For convenience, let $S_{in} (f)(x)=(S (f)(x)^2 - S_{out}(f)(x)^2)^{1/2}$ and consider  $f_J = f  \chi_J^N$ for some large constant $N$. 
By the square function estimate (from Lebesgue theory) we have
$$\int_J |S(f)(x)-\alpha_J|^2 dx \le \int_J |S_{in}(f)(x)|^2dx = \int_J |\widetilde S_{in} (f_J)(x)|^2dx $$
where $\widetilde S_{in}$ uses the mollified wave functions $\chi_J^{-N}\phi_{k}$. Note the mollified wave function has the same frequency support as  $\phi_k$ and has sufficient decay while localization over $(y_k-t_k,y_k+t_k)$ in the sense Lemma~\ref{l.wavepacket1} is applicable for appropriate exponents. Recall from Claim~\ref{cl.SAlg_Linfty_WellSep} that the selected points $(y_k,\eta_k,t_k)$ are well-separated with separation constants dependent solely on $\Theta$. We can therefore apply Lemma~\ref{l.wellsep1} along with the observation $t_k \le |J_k|$ at each $k$ to get
\begin{align*}
    \left(\int_J |S(f)(x)-\alpha_J|^2 dx \right)^{1/2} &\lesssim  \| f_J \|_2   +  \Big[\sup_{k}  \big|P(f)(y_k,\eta_k,t_k)\big| \Big(\sum_{k: J_k\subset J} |J_k|\Big)^{1/2} \Big]^{\, s} \|f_J\|_2^{1-s} \\
        &\lesssim |J|^{1/2} \inf_{x\in J} M_2(f)(x) + \lambda^s |J|^{1/2} \inf_{x\in J} [M(N)(x)]^{\, s/2}  \inf_{x\in J} [M_2(f)(x)]^{1-s}
\end{align*} 
as desired.

Now, using  \eqref{e.sharpLinfty} and the fact that $w\in \A_{q/2}$, 
\begin{align*}
    \big\|(S(f))^{\#}\big\|_{L^q(w)} 
    &\lesssim \|f\|_{L^q(w)} +  \lambda^{s} \|N\|_{L^{q/2}(w)}^{s/2} \|f\|_{L^q(w)}^{1-s} \\
    &\lesssim \|f\|_{L^q(w)} + \lambda^s \|N\|_{\infty}^{(\frac{s}{q})(\frac{q}{2}-1)}   \|N\|_{L^1(w)}^{s/q}   \|f\|_{L^q(w)}^{1-s}.
\end{align*}
As $\|N\|_{L^1(w)} = \sum_k w(J_k)$, combine \eqref{e.Linfty_est1} with the above sharp maximal function bounds to get
$$\lambda \|N\|_{L^1(w)}^{1/q} \lesssim  \|N\|_{\infty}^{\frac{s}{1-s}(\frac 12 -\frac 1 q)}\|f\|_{L^q(w)}.$$
By selecting $0<s<1$ suitably we obtain the desired conclusion for any $\alpha>0$, but for $n=0$.
For $n>0$,   apply the argument above for the mollified wave functions $\widetilde \phi_k(x) =  \chi_I(x)^{-n}\phi_k(x)$, and note that $\<f , \phi_k\> = \<f \chi_I^n ,\widetilde \phi_k\>$ for all $k$. As previously stated, the mollified wave function $\widetilde{\phi}_k$ has the same frequency support as  $\phi_k$ and   sufficient decay while still localized over $(y_k-t_k,y_k+t_k)$ so the same analysis as before applies. 
\endproof

With the  $L^1(w)$ norm estimate on $N_I$, we deduce a similar estimate for $L^{r,\infty}(w)$ with $0<r<1$.

\begin{corollary}\label{c.LrbyLinfty} Fix dyadic interval $I$. For any $r \in (0,1)$ and $n>0$ it holds that
$$\|N_I\|_{L^{r,\infty}(w)} \lesssim (\lambda^{-1}\|f \chi_I^n \|_{L^q(w)})^{q/r}.$$
\end{corollary}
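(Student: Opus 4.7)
Since $N_I$ takes values in $\Z_{\ge 0}$, the distribution function $t \mapsto w(\{N_I > t\})$ is piecewise constant with jumps only at positive integers, whence
\[
\|N_I\|_{L^{r,\infty}(w)}^r = \sup_{k \ge 1} k^r\, w(\{N_I \ge k\}).
\]
Chebyshev's inequality gives $k\, w(\{N_I \ge k\}) \le \|N_I\|_{L^1(w)}$, and combining with $k^{r-1} \le 1$ for $k \ge 1$ and $r \in (0,1)$ yields the crude estimate $\|N_I\|_{L^{r,\infty}(w)}^r \le \|N_I\|_{L^1(w)}$. Coupling this with Lemma~\ref{l.L1byLinfty} applied at exponent $\alpha = (1-r)/q$ produces the preliminary bound
\[
\|N_I\|_{L^{r,\infty}(w)}^r \lesssim_{r,n} \|N_I\|_\infty^{1-r}\, \bigl(\lambda^{-1}\|f\chi_I^n\|_{L^q(w)}\bigr)^{q}.
\]

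The remaining task is to absorb the residual factor $\|N_I\|_\infty^{1-r}$. The plan is to localize the argument via a stopping-time decomposition: for each integer $k \ge 1$, decompose the super-level set $\{N_I \ge k\}$ into maximal dyadic subintervals $\{Q_\ell^k\}$. Maximality forces the ambient count $c_\ell^k := \#\{j : J_j \supsetneq Q_\ell^k,\, J_j \subset I\}$ to satisfy $c_\ell^k \le k-1$, as seen by evaluating $N_I$ at a point of the parent of $Q_\ell^k$ lying outside $Q_\ell^k$, where by maximality $N_I < k$. The restricted counting function $N_{Q_\ell^k} = N_I - c_\ell^k$ is then non-negative integer-valued and bounded below by $1$ on $Q_\ell^k$, so $w(Q_\ell^k) \le \|N_{Q_\ell^k}\|_{L^1(w)}$. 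The well-separation hypothesis behind Lemma~\ref{l.L1byLinfty} is inherited by the subcollection $\{j : J_j \subset Q_\ell^k\}$, and applying the lemma to each $Q_\ell^k$, summing over $\ell$, and using the bounded-overlap estimate $\sum_\ell \|f\chi_{Q_\ell^k}^n\|_{L^q(w)}^q \lesssim \|f\chi_I^n\|_{L^q(w)}^q$ (valid for $n$ large owing to the disjointness of the $Q_\ell^k$ and the Schwartz-type decay of $\chi_Q^n$) produces a refined level-by-level estimate in which the residual $L^\infty$ factor now involves only the smaller quantities $\|N_{Q_\ell^k}\|_\infty$.

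The main obstacle is to close this estimate uniformly in $k$. I plan to exploit the bi-case split $c_\ell^k \le k/2$ versus $c_\ell^k > k/2$: in the first case $N_{Q_\ell^k} \ge k/2$ on $Q_\ell^k$, which yields an additional $k^r$ decay through a sharper Chebyshev; in the second case $Q_\ell^k$ is nested inside some maximal component of $\{N_I \ge \lceil k/2 \rceil\}$, producing a recursive reduction to a lower-level level-set estimate. Closing the resulting recursion, in which a growth factor of order $2^r$ per step must be counterbalanced either by a definite decrease in the residual $L^\infty$ norm (Case A) or by a strict combinatorial reduction (Case B), is the key technical point. The finite depth of the stopping tree, ensured by the a priori bound $\|N_I\|_\infty < \infty$ that follows from \eqref{e.Poisson_ptBd} and the Schwartz hypothesis on $f$, guarantees termination, and with $\alpha$ chosen small relative to $1-r$ the telescoped contributions should sum to a constant multiple of $(\lambda^{-1}\|f\chi_I^n\|_{L^q(w)})^q$, thereby giving the claimed bound.
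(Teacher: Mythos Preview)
Your first paragraph is correct, but as you recognize it leaves the residual factor $\|N_I\|_\infty^{1-r}$, which must be removed for the corollary to be of any use in Lemma~\ref{l.goodlambda}. The recursion you sketch does not close. Take $k=1$: every maximal component $Q_\ell^1$ of $\{N_I\ge 1\}$ has ambient count $c_\ell^1=0\le 1/2$, so everything lands in Case~A, yet your Case~A bound still carries $\|N_{Q_\ell^1}\|_\infty^{\alpha q}$, which can be of order $\|N_I\|_\infty^{\alpha q}$. Thus the scheme gives no improvement at its base level. More generally, the claim that Case~A yields ``a definite decrease in the residual $L^\infty$ norm'' is unsubstantiated: the hypothesis $c_\ell^k\le k/2$ forces the \emph{lower} bound $N_{Q_\ell^k}\ge k/2$ on $Q_\ell^k$ (hence the Chebyshev gain), but gives no \emph{upper} bound on $\|N_{Q_\ell^k}\|_\infty$, which may remain comparable to $\|N_I\|_\infty$. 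So the recursion keeps paying the same $L^\infty$ penalty at every step and does not telescope.

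The paper controls the $L^\infty$ factor by a different and more direct device. Fix $t>0$ and decompose $w(N>t)=\sum_{k\ge 0} w(2^k t < N \le 2^{k+1}t)$. For each $k$, let $N_k$ be the sub-counting function built only from those $J_j$ that meet the sublevel set $\{N\le 2^{k+1}t\}$. Nestedness of dyadic intervals then forces $\|N_k\|_\infty \le 2^{k+1}t$ automatically (the smallest $J_j$ through any point touches the sublevel set, and all larger $J_j$ through that point contain the touching point), while $N_k=N$ on $\{N\le 2^{k+1}t\}$, so $\{2^k t<N\le 2^{k+1}t\}\subset\{N_k>2^k t\}$. Applying Lemma~\ref{l.L1byLinfty} to this subcollection and then Chebyshev gives a level-wise bound with the $L^\infty$ factor replaced by $(2^{k+1}t)^{\alpha q}$; summing the geometric series in $k$ (requiring only $\alpha q<1$) yields $w(N>t)\lesssim t^{-(1-\alpha q)}\lambda^{-q}\|f\chi_I^n\|_{L^q(w)}^q$ with no residual $\|N_I\|_\infty$. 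The key difference from your plan is that the selection of which $J_j$ to keep is made by intersection with a \emph{sublevel} set rather than by spatial localization to stopping intervals; this is exactly what pins down the $L^\infty$ norm at each level.
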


\proof Arguing as before, without loss of generality we may assume that $N=N_I$ and $n=0$. Given any $t>0$, we have
$$w(N>t) = \sum_{k\ge 0} w(2^k t<N\le 2^{k+1}t).$$

For each $k$, let $A_k$ be the collection of $k$ such that $J_k \cap \{N\le 2^{k+1}t\} \ne \emptyset$ and denote $N_k$ as the counting function $N$ restricted to $A_k$. 
Then for every $x\in  \{N\le 2^{k+1}t\}$ we have $N(x) = N_k(x)$, therefore
$$\{2^k t<N\le 2^{k+1}t\} \subset \{N_k > 2^{k}t\}.$$
Furthermore, we also have
$$\|N_k\|_\infty \le 2^{k+1}t.$$
Indeed, $N_k$ is locally constant, and any interval on which $N_k$ is constant must be part of some $J_k$ that in turn intersects $\{N\le 2^{k+1}t\}$, and clearly $N_k\le N$.

Thus, applying Lemma~\ref{l.L1byLinfty} with $\alpha>0$ we obtain 
$$\lambda \|N_k\|_{L^1(w)}^{1/q} \lesssim \|N_k\|_\infty^\alpha \|f\|_{L^q(w)} \lesssim 2^{(k+1)\alpha}t^\alpha \|f\|_{L^q(w)}$$
therefore
$$ \lambda^q w(N_k>2^{k+1}t) \le 2^{-(k+1)}t^{-1} \lambda^q \|N_k\|_{L^1(w)}   \lesssim  2^{-(k+1)(1-\alpha q)} t^{-(1-\alpha q)}\|f\|_{L^q(w)}^q$$
Summing over $k\ge 0$, it follows that
$$\lambda^q w(N>t) \lesssim t^{-(1-\alpha q)} \|f\|_{L^q(w)}^q$$
provided that $0<\alpha < 1/q$. Letting $r=1-\alpha q$ we obtain
$$tw(N>t)^{1/r} \lesssim \lambda^{-q/r} \|f\|_{L^q(w)}^{q/r}.$$
To get $n>0$, apply the argument above for mollified wave functions $\widetilde \phi_k = \chi_I^{-n}\phi_k$. 
\endproof

Next, we establish a good $\lambda$ inequality with respect to $N$. Below,   let $M_{q,w}f$ be the weighted $L^q$-maximal function.

\begin{lemma}\label{l.goodlambda}
Consider $L>0$ and $r\in (0,1)$. There is some $c>0$ such that for any $t>0$,
$$w(N>t) \le \frac 1 L w (N>t/4) + w(M_{q,w}f> c\lambda t^{r/q}) .$$
\end{lemma}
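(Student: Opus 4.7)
The plan is a Calder\'on--Zygmund-style good-$\lambda$ argument. First I would decompose the level set $\{N > t/4\}$ as a disjoint union of maximal dyadic intervals $\{Q_j\}$ in the dyadic grid containing the $J_k$. Maximality ensures that the dyadic parent $\widehat{Q}_j$ contains a point $x^*$ with $N(x^*)\le t/4$; since every $J_k$ strictly containing $Q_j$ also contains $x^*$, the number of such $J_k$ is at most $t/4$. Writing $N_{Q_j}$ for the counting function restricted to the $J_k \subseteq Q_j$, this gives
$$N(x) \le \tfrac{t}{4} + N_{Q_j}(x) \qquad \text{for every } x \in Q_j,$$
and hence $\{N > t\}\cap Q_j \subset \{N_{Q_j} > 3t/4\}$.

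Next, Corollary~\ref{c.LrbyLinfty} applied to the interval $Q_j$ (with a fixed large $n$) yields
$$w(\{N > t\}\cap Q_j) \lesssim t^{-r}\lambda^{-q}\,\|f\chi_{Q_j}^n\|_{L^q(w)}^q.$$
I then classify each $Q_j$ as \emph{bad} if $Q_j \subset \{M_{q,w}f > c\lambda t^{r/q}\}$, and \emph{good} otherwise. The bad intervals contribute $\sum_{\mathrm{bad}} w(Q_j) \le w(M_{q,w}f > c\lambda t^{r/q})$, which is exactly the second term of the inequality. For a good $Q_j$, some point $x_0\in Q_j$ realizes the bound $M_{q,w}f(x_0) \le c\lambda t^{r/q}$. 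I would decompose $\|f\chi_{Q_j}^n\|_{L^q(w)}^q$ into the dyadic annuli $2^{k+1}Q_j \setminus 2^k Q_j$, exploit the polynomial decay $\chi_{Q_j}^{nq}\lesssim 2^{-2nqk}$, embed each annulus into a dyadic ancestor of $Q_j$ of comparable size on which the control from $M_{q,w}f(x_0)$ applies, and use the $A_{q/2}$-doubling of $w$. With $n$ chosen large enough for the resulting geometric series to converge, this produces
$$\|f\chi_{Q_j}^n\|_{L^q(w)}^q \lesssim w(Q_j)(c\lambda t^{r/q})^q,$$
so that $w(\{N > t\}\cap Q_j) \lesssim c^q w(Q_j)$. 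Choosing $c$ small enough to absorb the implicit constant into $1/L$ and summing over the good intervals $Q_j \subset \{N>t/4\}$ delivers the first term.

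I expect the main technical point to be the transition from $\|f\chi_{Q_j}^n\|_{L^q(w)}^q$ to $w(Q_j)(c\lambda t^{r/q})^q$ on good cubes: one has to reconcile the same-centre dilates $2^k Q_j$ arising from the definition of $\chi_{Q_j}$ with the dyadic ancestors of $Q_j$ that are directly controlled by $M_{q,w}f(x_0)$, and then convert the $|Q|$-weighted averages of $|f|^q w$ into $w(Q)$-weighted ones using the $A_{q/2}$ condition and doubling of $w$. Once this tail estimate is in hand, the good/bad split and the final summation are routine and produce the stated inequality.
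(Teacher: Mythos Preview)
Your proposal is correct and follows essentially the same route as the paper: maximal dyadic decomposition of $\{N>t/4\}$, the observation that on each maximal interval $I$ one has $N \le N_I + t/4$ (so $\{N>t\}\cap I \subset \{N_I > t/4\}$), application of Corollary~\ref{c.LrbyLinfty}, and the good/bad dichotomy according to whether $I$ meets $\{M_{q,w}f \le c\lambda t^{r/q}\}$. The paper compresses your annulus tail argument into the single line $\|f\chi_I^n\|_{L^q(w)}^q \lesssim w(I)\,\inf_{x\in I} M_{q,w}(f)(x)^q$ (which indeed relies on the decay of $\chi_I^n$ together with doubling of $w$), so your identification of this as the main technical point is on target.
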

\proof Let $\bf I$ be the collection of all maximal dyadic intervals that are subsets of $\{N>t/4\}$. Suppose that $I\in \bf I$ and $I\cap \{M_{q,w}f \le c\lambda t^{q/r}\} \ne\emptyset$. Then  for every $x\in \{N > t\}\cap I$ we have $N(x)-N_I(x)\le  t/4$, so in particular $x\in  \{N_I>t/4\}$. It follows that, using Corollary~\ref{c.LrbyLinfty},
$$w(\{N>t\}\cap I) \le w(N_I>t/4) \lesssim t^{-r} \lambda^{-q}\|f \chi_I^n\|_{L^q(w)}^q$$
$$\lesssim t^{-r} \lambda^{-q} w(I) \inf_{x\in I} M_{q,w}(f)(x)^q \lesssim t^{-r} \lambda^{-q} w(I) (c\lambda t^{r/q})^q \lesssim c^q w(I).$$
Consequently, by choosing $c>0$ sufficiently small (independent of $I$) we obtain
$$w(\{N>t\}\cap I) \le \frac 1 L w(I).$$
Summing over $I$ we obtain the desired claim.
\endproof

We are now ready to show \eqref{e.Linfty_dyadic}. Integrating over $t>0$ in the good $\lambda$ estimate provided by Lemma~\ref{l.goodlambda}, it follows (from the standard argument) that
	$$ \|N\|_{L^1(w)} \lesssim \int_0^\infty w(M_{q,w} (f)>c\lambda t^{r/q})dt =(c\lambda)^{-q/r}\int_0^\infty w(M_{q,w} (f)>s) s^{\frac qr -1}ds $$
	$$ \lesssim \lambda^{-q/r} \|M_{q,w}(f)\|_{L^{q/r}(w)} \lesssim \lambda^{-q/r} \|f\|_{L^{q/r}(w)} .$$
Using the fact that the  $\A_q$ condition is an open condition, we actually have $w\in \A_{qr}$ for some $r\in (0,1)$. Thus, by replacing $q$ with $qr$, 
$$\|N\|_{L^1(w)} \lesssim \lambda^{-q} \|f\|_{L^q(w)}.$$
This completes the proof of \eqref{e.Linfty_dyadic} which in turn implies the desired estimate \eqref{e.Linfty}.

We finish by denoting $Q_0$ as the collection of $(x_k,\xi_k,s_k)$ selected and $$E_0 = \bigcup_{(x,\xi,s)\in Q_0} T(x,\xi,s).$$  We have $|P(f)(y,\eta,t)|\le \lambda$ for all $(y,\eta,t)\in E_0^c$, while the weighted outer measure of $E_0$ is $O(\lambda^{-q}\|f\|_{L^q(w)})$.  We free the notations $(y_k,\eta_k,t_k)$, $(x_k,\xi_k,s_k)$, $T_k$, $I_k$, $k\ge 1$.

\subsubsection{Treatment for the $L^2$ part of the size} We now select tents over which the $L^2(w)$ portion of size $\s$ is large with respect to $\lambda$. We split a tent $T(x,\xi,s)$ into its upper half 
	$$T_+(x,\xi,s) = T(x,\xi,s) \cap \{ (y,\eta,t) \in X \, : \,  \eta \ge \xi\}$$
and its lower half $T_-(x,\xi,s) = T(x,\xi,s) \backslash T_+(x,\xi,s)$.  We define $T^b_{\pm}$ and $T^\ell_{\pm}$ similarly.

This section focuses on finding a countable collection of points $Q_+ \subset X_\Delta$ such that
    $$ \sum_{(x,\xi,s) \in Q_+} w(x-s,x+s) \le C \lambda^{-q} \|f\|_{L^q(w)}$$
and for every tent  $T(x,\xi,s) \in \E_\Delta$, 
    $$ \frac 1 {w(x-s,x+s)}\int_{T^\ell_+(x,\xi,s) \backslash E_+} |P(f)(y,\eta,t)|^2 w(y-t,y+t)dy d\eta \frac{dt}t \le \lambda^2 $$
where 
    $$ E_+ = E_0 \cup \bigcup_{(x.\xi,s) \in Q_+} T(x,\xi,s). $$
We remark that the work to generate $Q_+$ and $E_+$ can be applied symmetrically to find a countable collection of points $Q_- \subset X_\Delta$ such that
    $$ \sum_{(x,\xi,s) \in Q_-} w(x-s,x+s)  \le C \lambda^{-q} \|f\|_{L^q(w)}$$
and if 
    $$ E_- = E_0 \cup  \bigcup_{(x,\xi,s) \in Q_-} T(x,\xi,s) , $$
then 
    $$ \frac 1 {w(x-s,x+s)}\int_{T^\ell_-(x,\xi,s) } |P(f)(y,\eta,t) 1_{X \backslash E_-}(y,\eta,t)|^2 w(y-t,y+t)dy d\eta \frac{dt}t \le  \lambda^2 $$
for all tents $T(x,\xi,s) \in \E_\Delta$. By setting  $Q = Q_0 \cup Q_+ \cup Q_-$ and $E:=\bigcup_{(x,\xi,s)\in Q} T(x,\xi,s)$,  the proof of the $q$-endpoint estimate \eqref{e.weakwembed-q} will then be completed.

{\bf $\bm{L^2}$ Selection Algorithm}. Let $C$ be larger than the doubling constant of $w$. Suppose there exists $(x,\xi,s)\in X_\Delta$ such that
\begin{equation}\label{e.L2selectalg_badpt}
     \frac 1 {w(x-s,x+s)}\int_{T^\ell_+(x,\xi,s)  \setminus  E_0} |P(f)(y,\eta,t)|^2 w(y-t,y+t)dy d\eta \frac{dt}t \ge C^{-1}\lambda^2 . 
\end{equation}
Applying \eqref{e.squareTL2}, it follows that $w(x-s,x+s)$  is bounded from above a priori. By substituting $f=f \chi^N_{(x-s,x+s)}\chi^{-N}_{(x-s,x+s)}$ in \eqref{e.L2selectalg_badpt} where $N>q$, similar work in conjunction with Remark~\ref{r.ApNotL1} shows  $s$ itself is bounded  a priori. Indeed, as $w \not\in L^1$ is a doubling weight, it forces $|x|/s$ to be sufficiently large whenever $s$ itself is large. Consequently, $\|  f \chi^N_{(x-s,x+s)} \|_\infty$ is sufficiently small. As 
    $$\| \chi^N_{(x-s,x+s)}\|_{L^1(w)}\lesssim w(x-s,x+s),$$
we conclude the value of $s$ for points $(x,\xi,s)$ satisfying \eqref{e.L2selectalg_badpt} must be bounded.

Let $\widetilde{s}$ be the least upper bound on $s$. As such, $\xi$ is a discrete parameter since it is a multiple of $2^{-8}b (\,  \widetilde{s} \, )^{-1}$. Since $\widehat{f}$ is compactly supported, there is an upper bound on $\xi$
meaning there is a maximal possible value $\xi = \xi_{\max}$ to consider in \eqref{e.squareTL2}. Select $(x_1,\xi_1, s_1) \in X_\Delta$ satisfying \eqref{e.L2selectalg_badpt} such that $\xi_1=\xi_{\max}$  and $s_1$ is maximal with respect to the restriction $\xi_1 = \xi_{\max}$. Let $T_1 = T(x_1,\xi_1,s_1)$ and  $I_1=(x_1-s_1,x_1+s_1)$  for convenience of notation. 
By the maximality of $s_1$ and the doubling property of $w$, note
$$\frac 1 {w(I_1)}\int_{T^\ell_+(x_1,\xi_1,s_1)  \setminus  E_0} |P(f)(y,\eta,t)|^2 w(y-t,y+t)dy d\eta \frac{dt}{t} \le \lambda^2.$$
We now iterate the argument. Assume that we have selected $(x_k,\xi_k,s_k) \in X_\Delta$ for $1\le k \le n-1$ and set $E_{n}=E_0\cup \bigcup_{k=1}^{n-1} T_k$. 
Suppose there is some $(x,\xi,s) \in X_{\Delta}$ such that 
$$\frac 1 {w(x-s,x+s)}\int_{T^\ell_+(x,\xi,s)  \setminus  E_{n}} |P(f)(y,\eta,t)|^2 w(y-t,y+t)dy d\eta \frac{dt}t \ge C^{-1}\lambda^ .2$$
We now select such a point  $(x_n,\xi_n,s_n)$ such that $\xi_n$ is a (possibly new) maximal $\xi_{\max}$ and $s_n$ is maximized with respect to $\xi_{\max}$. Denote $T_n = T(x_n,\xi_n,s_n)$ and $I_n = (x_n-s_n,x_n+s_n)$.   Again,  by the maximality of $s_n$,
$$\frac 1 {w(I_n)} \int_{T^\ell_+(x_n,\xi_n,s_n)  \setminus E_{n}} |P(f)(y,\eta,t)|^2 w(y-t,y+t)dy d\eta \frac{dt}t \le \lambda^2.$$
 
Our goal is to show that
\begin{equation}\label{e.L2}
\sum_{k =1}^n w(I_k)  \le C \lambda^{-q}\|f\|_{L^q(w)}^q.
\end{equation}
where $C$ is independent of $n$. Assuming  \eqref{e.L2} is valid, we now justify the termination of the $L^2$ selection algorithm. If the algorithm terminates after selecting $n$ tents, then
    $$ \frac 1 {w(x-s,x+s)} \int_{T^\ell_+(x,\xi,s)  \setminus  E_{n+1}} |P(f)(y,\eta,t)|^2 w(y-t,y+t)dy d\eta \frac{dt}t \le  C^{-1} \lambda^2. $$
holds for all $(x,\xi,s) \in \E_\Delta$ and we set $Q_+$ as the collection of $(x_k,\xi_k,s_k)$ for $1 \le k \le n$. In the scenario the algorithm does not terminate after a finite number of steps, set  $E_{(1)}=E_0 \cup \bigcup_{k \ge 1} T_k$ and $Q_{+,(1)}$ as the collection of selected $(x_k,\xi_k,s_k)$ for $k \ge 1$. Note the selected $\xi_k$ form a non-increasing sequence of elements in the lattice $\Z 2^{-8}b  (\,  \widetilde{s} \, )^{-1}$. In the case $\xi_k$ tends to negative infinity, suppose there is some $(x,\xi,s) \in X_\Delta$ satisfying
\begin{equation}\label{e.L2badpt_E1}
    \frac 1 {w(x-s,x+s)}\int_{T^\ell_+(x,\xi,s)  \setminus E_{(1)}} |P(f)(y,\eta,t)|^2 w(y-t,y+t)dy d\eta \frac{dt}{t} \ge C^{-1}\lambda^2  .
\end{equation}
As $\xi_k \to -\infty$, then $\xi_j < \xi$ for some $j$ which contradicts the selection of $T_j$. 
Thus, the converse inequality to \eqref{e.L2badpt_E1} must hold for all $(x,\xi,s) \in X_\Delta$ and we set $Q_+ = Q_{+,(1)}$. 

Now suppose $\xi_k$ does not tend to negative infinity and instead stabilizes at some finite $\xi_{(1)}$.  We restart the algorithm and redefine the selected tents $T_k$ by $T_{(1),k} = T(x_{(1),k},\xi_{(1),k},s_{(1),k})$ and intervals $I_{k}$ by $I_{(1),k}$. Observe in this scenario that the tail of the sequence $s_{(1),k}$ decays to $0$. Indeed, if $s_{(1),k}$ converges to some nonzero $s_{(1)}$ then the tail of $Q_{+,(1)}$ is of the form $(x_{(1),k},\xi_{(1)},s_{(1)})$ where $x_{(1),k}$ is an element in the lattice $\Z 2^{-4} s_{(1)}$.  The corresponding intervals $I_{(1),k}$ therefore eventually slide  towards infinity or negative infinity. By recycling the argument for the  a priori bound on  $s$, such a sequence cannot occur.

Consider $(x,\xi,s) \in X_\Delta$ where \eqref{e.L2badpt_E1} holds. As before, there is a maximal frequency $\xi_{\max}$ to consider for such a point. Given the previous  selection of tents, we know $\xi_{\max} \le \xi_{(1)}$. If $\xi_{\max} = \xi_{(1)}$ and $(x,\xi_{\max},s)$ satisfies \eqref{e.L2badpt_E1} then $s > s_{(1),k}$ for some $k$  by the previous paragraph which contradicts the selection of $T_{(1),k}.$
Therefore, it follows that $\xi_{\max} < \xi_{(1)}$. Choose a point $(x_{(2),1} \xi_{(2),1}, s_{(2),1})$ satisfying \eqref{e.L2badpt_E1} such that  $s_{(2),1}$ is maximized under the condition $\xi_{(2),1} = \xi_{\max}$. Iterate the selection algorithm as before to obtain a sequence of tents and intervals 
    $$ T_{(2),k} = T(x_{(2),k},\xi_{(2),k},s_{(2),k}) \, , \qquad \text{ and } \qquad I_{(2),k} = (x_{(2),k} - s_{(2),k}, x_{(2),k}+s_{(2),k}).$$ 
 The proof of \eqref{e.L2}, to be shown, will naturally extend here to give
    $$ \sum_{k=1}^\infty w(I_{(1),k}) + \sum_{k \ge 1} w(I_{(2),k}) \le C \lambda^{-q} \|f \|_{L^q(w)}^q .$$
    
Continue the process as shown above. If we eventually have a sequence of frequencies $\xi_{(m),k}$ ($m$ is fixed) which either terminates after finitely many $k$ or $\xi_{(m),k} \to - \infty$ then there are no more $(x,\xi,s) \in X_\Delta$ for which the (now updated) version of \eqref{e.L2badpt_E1} holds. At worst, we  eventually obtain a double sequence of tents $T_{(m),k}$ 
where the double summation of $w(I_{(m),k})$  is $O(\lambda^{-q} \| f \|_{L^q(w)}^q)$. 
In addition, the sequence of stabilizing points $\xi_{(k)}$ in this case is strictly decreasing in a discrete lattice and tending to negative infinity. We finish by setting $Q_+$ as the collection of $(x_{(j),k},\xi_{(j),k},s_{(j),k})$ where $j,k \in \Nat$  and $E_+$ as the union of tents $T(x_{(j),k},\xi_{(j),k},s_{(j),k})$. We can therefore conclude that there are no more points $(x,\xi,s) \in X_\Delta$ such that 
    $$ \frac{1}{w(x-s,x+s)} \int_{T^\ell_+(x,\xi,s) \backslash E_+} |P(f)(y,\eta,t)|^2 w(y-t,y+t) \, dy d\eta \frac{dt}{t} \ge C^{-1} \lambda^2$$
and so the $L^2$ algorithm terminates. 

It remains to prove the weighted estimate \eqref{e.L2}. Note the proof  follows similar steps as in the $L^\infty$ treatment in Section~\ref{ssub:Linfsize}. For convenience of notation let $$T_k^* = T^\ell_+(x_k,\xi_k, s_k) \setminus  E_k \qquad \text{ for }k \ge 1$$ with $E_1 = E_0$. We first note the well separation of the  partial tents $T^*_k$.

\begin{claim}\label{cl.SAlg_L2_WellSep}
The collection of partial tents $\{T^*_k\}$ is well-separated in the sense of \eqref{e.wellsep_partialtents} with separation constants $\alpha=1$, $\beta = 2^{-6}b$, and $B= 2^8 \max(C_1,C_2)b^{-1}$  in terms of $\Theta = (C_1,C_2,b)$. 
\end{claim}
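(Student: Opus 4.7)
The plan is to verify \eqref{e.wellsep_partialtents} by a two-case analysis on the pair $(T_k^*, T_j^*)$, resting on two structural properties of the $L^2$ selection algorithm that I would extract first. (P1) The selected frequencies $\{\xi_k\}_{k \ge 1}$ are non-increasing: at step $n$, $\xi_n$ is chosen as the maximal frequency admitting a point that still meets the $L^2$ threshold after removing the growing excluded set $E_n$, so enlarging $E_n$ can only push this maximum down; across the phases indexed by $(m)$ in Section~\ref{subs.weakLq} the stabilization values $\xi_{(m)}$ are explicitly strictly decreasing, so monotonicity is global. (P2) For $j > k$, the tent $T_k$ sits inside $E_j$, hence $T_j^* = T^\ell_+(x_j,\xi_j,s_j) \setminus E_j$ is disjoint from $T_k$; in particular every point of $T_j^*$ lies outside the open tent $T_k$.

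Given (P1) and (P2), fix $(y,\eta,t) \in T_k^*$ and $(y',\eta',t') \in T_j^*$ with $Bt' < t$. Since $(y,\eta,t) \in T_k$ gives $t < s_k$, also $t' < s_k$. The specific choice $B = 2^8 \max(C_1,C_2) b^{-1}$ converts the scale gap into $C_1/t,\, C_2/t \le b/(2^8 t')$, and this is what drives every frequency-separation estimate.

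In Case 1 ($j \le k$), monotonicity $\xi_j \ge \xi_k$ together with the lacunary-upper-half condition gives $\eta' \ge \xi_j + b/t' \ge \xi_k + b/t'$, while $(y,\eta,t) \in T_k$ gives $\eta \le \xi_k + C_2/t$; subtracting and applying the scale bound yields $\eta' - \eta > (1 - 2^{-8})\,b/t' > 2^{-6}\,b/t' = \beta/t'$. Frequency separation is automatic (this also handles the same-tent case $j=k$). In Case 2 ($j > k$), property (P2) gives $(y',\eta',t') \notin T_k$, and since $t' < s_k$ one of the three remaining defining inequalities of the open tent $T_k$ must fail. Failure of the spatial inequality yields exactly the required position separation with $\alpha = 1$. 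Failure of either frequency inequality, combined with the bounds on $\eta$ from $(y,\eta,t) \in T^\ell_+(x_k,\xi_k,s_k)$ and the scale bound, forces frequency separation using $C_1, C_2 > b$; for instance $\eta' - \xi_k \ge C_2/t'$ leads to $\eta' - \eta \ge C_2/t' - C_2/t > (C_2 - b/2^8)/t' > 2^{-6}\,b/t'$, and the third possibility is handled symmetrically using $\eta \ge \xi_k + b/t$.

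The main obstacle I anticipate is not any single estimate but the bookkeeping: one has to check carefully that the global monotonicity (P1) really does survive across the phases described in Section~\ref{subs.weakLq}, and one has to absorb the mild technical point that ``$(y',\eta',t') \notin T_k$'' delivers ``$\ge$'' rather than the ``$>$'' demanded by the definition in the position condition. However, the boundary equality set $\{|y'-x_k| = s_k - t'\}$ is a three-dimensional Lebesgue measure-zero slice and is harmless in the intended application of Lemma~\ref{l.wellsep2}, where the well-separation only enters as an integration cutoff.
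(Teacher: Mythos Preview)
Your argument is correct and rests on the same two structural facts the paper uses---monotonicity of the selected frequencies and the exclusion $T_j^* \cap T_k = \emptyset$ for $j>k$---together with the same arithmetic converting the scale gap $Bt'<t$ into $C_i/t \le 2^{-8}b/t'$. The only organizational difference is that the paper argues by contrapositive (assume $|\eta-\eta'|\le\beta/t'$, deduce $\xi_j>\xi_k$ strictly, hence the large-scale tent was selected first, hence the small-scale point lies outside it and the position condition must fail), whereas you split first on the index order $j\le k$ versus $j>k$; in content the two arguments coincide, and your observation about the non-strict boundary $|y'-x_k|\ge s_k-t'$ is a point the paper simply elides.
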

To verify the claim, consider $(y,\eta,t) \in T^*_j$ and $(y',\eta',t') \in T^*_k$ such that $B t' < t$. Assuming $|\eta - \eta'| \le 2^{-6}b t'^{-1}$,  it follows from $(y',\eta',t')$ being in the upper lacunary part of $T_k$ that $\xi_j> \xi_k$. 
	$$ \xi_j - \xi_k = (\eta - \eta') - (\eta - \xi_j) + (\eta' - \xi_k)  $$
	$$ \ge -2^{-6}b t'^{-1} - C_2 t^{-1}  + b t'^{-1}  \ge (1 - 2^{-6}  -2^{-8}) b t'^{-1} > 0  $$
	This means tent $T_j$ was selected prior to $T_k$  so $(y',\eta',t') \not\in T_j$ by the selection process. Furthermore, observe that $t' < t < s_j$ and  
	$$ \eta' - \xi_j = (\eta'-\eta)+(\eta-\xi_j) \le \big( 2^{-7}b + C_2 B^{-1}\big)  (t')^{-1} \le C_2 (t')^{-1}$$
    with similar work showing $\eta' - \xi_j \ge -C_1 (t')^{-1}$.  As $(y',\eta',t') \not\in T_j$, we then require
        $$|y'-x_j| > s_j - t'> s_j - t$$
    which verifies Claim~\ref{cl.SAlg_L2_WellSep}. 
    
As in the $L^\infty$ selection argument, application of the three grids trick (see Remark~\ref{r.3grids})  means it suffices to show 
\begin{equation}\label{e.L2_dyadic}
    \sum_k w(J_k) \le C \lambda^{-q} \|f\|_{L^q(w)}^q    
\end{equation} 
where all $J_k$ are dyadic intervals in the standard grid $\mathcal{D}_0$, $1/3$ shifted grid $\mathcal{D}_1$, or $2/3$ shifted grid $\mathcal{D}_2$   such that $I_k \subset J_k$ and $3 |I_k| \le |J_k| \le 6 |I_k|$. We may assume without loss of generality that all $J_k$ belong to the standard dyadic grid.

As before, let $N(x)$ be the counting function over the selected intervals $J_k$ and  $N_I(x)$ be the counting function $N$ restricted to $J_k$ contained in interval $I$. 
We need the following analogue of Lemma~\ref{l.L1byLinfty}, and the rest of the proof for the $L^2$ portion (with respect to upper half of tents) is exactly the same as the $L^\infty$ portion in Section~\ref{ssub:Linfsize}.

\begin{lemma}\label{l.L1byLinfty-L2} Fix a dyadic interval $I$, $\alpha>0$, and integer $n \ge 0$.  Let $\chi_I(x)$ be as defined in \eqref{e.localization}. Then
$$\lambda \|N_I\|_{L^1(w)}^{1/q} \lesssim_{n,\alpha} \|N_I\|_{\infty}^\alpha \|f \chi_I^n\|_{L^q(w)}.$$
\end{lemma}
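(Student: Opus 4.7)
The plan is to mirror the proof of Lemma~\ref{l.L1byLinfty} with two substitutions: the discrete square function is replaced by a continuous one indexed over the partial tents $T^*_k$, and Lemma~\ref{l.wellsep1} is replaced by Lemma~\ref{l.wellsep2} at the well-separation step. As in that proof it suffices to treat $n=0$; for $n>0$ one replaces each $\phi_{y,\eta,t}$ by $\widetilde{\phi}_{y,\eta,t} := \chi_I^{-n}\phi_{y,\eta,t}$ (these retain compact frequency support and the decay needed for Lemma~\ref{l.wellsep2}) and uses $\<f,\phi_{y,\eta,t}\> = \<f\chi_I^n,\widetilde\phi_{y,\eta,t}\>$. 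The starting point is the lower bound built into the $L^2$ selection algorithm,
$$\lambda^{2}\, w(I_k) \;\lesssim\; \int_{T^*_k} |P(f)(y,\eta,t)|^2 \, w(y-t,y+t)\, dy\, d\eta\, \frac{dt}{t},$$
together with the key inherited $L^\infty$ bound $\sup_{(y,\eta,t)\in T^*_k} |P(f)(y,\eta,t)|\le \lambda$ for all $k$, valid because $T^*_k\subset X\setminus E_0$.

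Introduce the continuous square function
$$S(f)(x) \;:=\; \Bigl(\sum_{k:\,J_k\subset I} \int_{T^*_k} |P(f)(y,\eta,t)|^2\, 1_{|x-y|<t}\, dy\, d\eta\, \frac{dt}{t}\Bigr)^{1/2}.$$
By Fubini $\|S(f)\|_{L^2(w)}^2$ equals the sum over $k$ of the integrals displayed above, hence $\lambda^2 \|N_I\|_{L^1(w)} \lesssim \|S(f)\|_{L^2(w)}^2$. Since $\supp S(f)\subset \bigcup_{k:J_k\subset I} I_k$, H\"older gives $\|S(f)\|_{L^2(w)} \le \|N_I\|_{L^1(w)}^{1/2-1/q}\|S(f)\|_{L^q(w)}$, and the Fefferman--Stein sharp maximal inequality (applicable since $w\in \A_{q/2}\subset \A_q$) yields $\|S(f)\|_{L^q(w)} \lesssim \|(S(f))^{\#}\|_{L^q(w)}$.

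The heart of the proof is the pointwise sharp maximal estimate
$$(S(f))^{\#}(x) \;\lesssim_{\phi,s}\; M_2(f)(x) + \lambda^{s}\bigl[M(N)(x)\bigr]^{s/2}\bigl[M_2(f)(x)\bigr]^{1-s}$$
for any $s\in(0,1)$. I would fix a dyadic $J$, split $S(f)^2 = S_{\mathrm{in}}(f)^2 + S_{\mathrm{out}}(f)^2$ with $S_{\mathrm{in}}$ collecting the contributions from those $T^*_k$ with $J_k\subset J$ (together with the low-scale remainders $t\le |J|$ from tents with $J\subset J_k$), and set $\alpha_J = S_{\mathrm{out}}(f)(c_J)$. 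Because the scales contributing to $S_{\mathrm{out}}$ exceed $|J|$, the indicator $1_{|x-y|<t}$ is essentially constant for $x\in J$, so $\int_J |S(f)-\alpha_J|^2\,dx \lesssim \int_J S_{\mathrm{in}}(f)^2\,dx$. Substituting $f = (f\chi_J^{N_0})\chi_J^{-N_0}$ to pass the mollifier onto the wave packet, then applying Lemma~\ref{l.wellsep2} to the well-separated (by Claim~\ref{cl.SAlg_L2_WellSep}) subfamily $\{T^*_k : J_k\subset J\}$ with the uniform $L^\infty$ bound $\lambda$, one arrives at
$$\Bigl(\int_J S_{\mathrm{in}}(f)^2\,dx\Bigr)^{1/2} \;\lesssim\; \|f\chi_J^{N_0}\|_2 + \Bigl[\lambda \bigl(\textstyle\sum_{k:J_k\subset J} s_k\bigr)^{1/2}\Bigr]^{s}\|f\chi_J^{N_0}\|_2^{1-s}.$$
Combined with $\|f\chi_J^{N_0}\|_2 \lesssim |J|^{1/2}\inf_J M_2(f)$ and $\sum_{k:J_k\subset J} s_k \lesssim |J|\inf_J M(N)$, this yields the claimed pointwise bound.

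To finish, take $L^q(w)$ norms. Since $w\in \A_{q/2}$ the maximal functions $M_2$ and $M$ are bounded on $L^q(w)$ and $L^{q/2}(w)$ respectively; factorizing $\|N\|_{L^{q/2}(w)} \le \|N\|_\infty^{1-2/q}\|N\|_{L^1(w)}^{2/q}$ and rearranging in the H\"older step produces
$$\lambda \|N_I\|_{L^1(w)}^{1/q} \;\lesssim\; \|N_I\|_\infty^{\frac{s}{1-s}(\frac12-\frac1q)}\, \|f\|_{L^q(w)},$$
which is the desired bound for any prescribed $\alpha>0$ upon taking $s$ sufficiently small. The main technical obstacle is the continuous analogue of the Pythagorean-style inequality $\int_J |S(f)-\alpha_J|^2 \le \int_J S_{\mathrm{in}}(f)^2$: unlike the discrete square function in Lemma~\ref{l.L1byLinfty}, here $S_{\mathrm{out}}$ is only approximately constant on $J$ because of the $x$-dependence in $1_{|x-y|<t}$, and the low-scale contributions from tents with $J\subsetneq J_k$ must be carefully reabsorbed into $S_{\mathrm{in}}$ so that Lemma~\ref{l.wellsep2} applies without degradation.
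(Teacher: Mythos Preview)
Your outline matches the paper's argument almost exactly: the same continuous square function $S$, the same H\"older reduction to $\|S(P(f))\|_{L^q(w)}$, the same sharp maximal estimate with target
\[
(S(P(f)))^{\#}(x) \lesssim M_2(f)(x) + \lambda^s [M(N)(x)]^{s/2}[M_2(f)(x)]^{1-s},
\]
the appeal to Lemma~\ref{l.wellsep2} via Claim~\ref{cl.SAlg_L2_WellSep}, and the same algebraic endgame. The one place you diverge is precisely the ``technical obstacle'' you flag at the end, and the paper handles it by a device you did not anticipate.

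Rather than working directly with the cone indicator $1_{|x-y|<t}$ and trying to argue that $S_{\mathrm{out}}$ is approximately constant on $J$, the paper first dominates $S$ by a sum of three auxiliary square functions $S_0,S_1,S_2$ in which $1_{|x-y|<t}$ is replaced by $\sum_{I\in\mathcal{D}_j:\,t/C\le |I|\le Ct} 1_I(y)1_I(u)$ via the three-grids trick (Remark~\ref{r.3grids}). For each $S_j$ the outer piece $\alpha(u)$, built from those dyadic $I\in\mathcal{D}_j$ with $J\subset I$, is then \emph{exactly} constant on $J$, so the Pythagorean step $|S_j(P(f))(u)-\alpha_J|\le S_{j,\mathrm{in}}(P(f))(u)$ is clean. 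This simultaneously disposes of your ``low-scale remainders'': after the dyadic replacement, the inner contribution is controlled by an integral over $\{(y-t,y+t)\subset CJ\}$, and the paper invokes Remark~\ref{r.obs_enlarge} to observe that each $T^*_k\cap\{(y-t,y+t)\subset CJ\}$ is itself a partial tent inside a smaller tent $T'_k$ with top $I_k\cap CJ$, so the well-separation constants from Claim~\ref{cl.SAlg_L2_WellSep} are inherited and Lemma~\ref{l.wellsep2} applies directly with $\sum_k |I'_k|\lesssim |J|\inf_J M(N)$.

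In short, your plan is correct but incomplete at exactly the point you identified; the missing idea is the shifted-dyadic replacement of $1_{|x-y|<t}$, which converts the approximate constancy of $S_{\mathrm{out}}$ into an exact one and lets the tent-containment geometry (Remark~\ref{r.obs_enlarge}) absorb the boundary pieces.
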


\proof
As before, we may assume without loss of generality that $N=N_I$ (freeing up the notation of interval $I$) and set $n=0$.

Let $S$ be the following square function
$$ S (F)(u) := \left(\sum_{k: J_k} \int_{T^*_k} |F(y,\eta,t)|^2 1_{|y-u|<t} \, dy d\eta \frac{dt}t \right)^{1/2}, \quad u\in \R.$$
Appealing to the doubling property of $w$ and the selection criteria for the tents $T_k$,
$$\lambda^2 \sum_k w(J_k) \lesssim \sum_{k} \int_{T^*_k} |P(f)(y,\eta,t)|^2 w(y-t,y+t) \, dyd\eta \frac{dt}{t} = \|S(P(f))\|_{L^2(w)}^2.$$
We note that if $(y,\eta,t)\in T(x_k,\xi_k,s_k)$ and $|y-u|<t$ then $u\in (y-t,y+t)\subset (x_k-s_k,x_k +s_k)$. Therefore 
$supp(S F) \subset \bigcup_{k: J_k} J_k$. Consequently, by an application of H\"older's inequality,
$$\lambda \|N\|_{L^1(w)}^{1/q} \lesssim \|S (P(f))\|_{L^q(w)}.$$

It remains to control the $L^q(w)$ norm of this square function.  Our main idea here is to cover $(y-t,y+t)$ using a  (shifted) dyadic interval  $I$   comparable to $(y-t,y+t)$ via the three grids trick and pass to (shifted) square functions over these grids.  More precisely, the interval $I$ will belong to either the classical  grid $\mathcal{D}_0$ or one of the shifted grids  $\mathcal{D}_1$, $\mathcal{D}_2$ mentioned prior. We may bound
$$S(F)(u) \le \sum_{j=0}^2 S_{j}(F)(u)$$
where $S_j(F)$ is a square function over grid $\mathcal{D}_j$.
Namely, for each grid $\mathcal{D}_j$  we may define for some $C>1$ (sufficiently large absolute constant)
$$S_j (F)(u) =  \Bigg(\sum_{k: J_k} \int_{T^*_k} |F(y,\eta,t)|^2 \sum_{I\in \mathcal{D}_j: t/C \le |I| \le Ct} 1_{I}(y) 1_{I}(u) \,  dy d\eta \frac{dt}t \Bigg)^{1/2}$$

Fix $j$. Standard estimates show 
$$\|S_j (P(f))\|_{L^q(w)} \lesssim \\|\big(S_j(P(f))\big)^{\sharp}\|_{L^q(w)}$$
where $(\cdot)^{\sharp}$ is  the dyadic sharp maximal function, with intervals from the grid $\mathcal{D}_j$. Now,
for each dyadic $J\in \mathcal{D}_j$, we then let
$$\alpha(u)= \Bigg(\sum_{k: I_k} \int_{T^*_k} |P(f)(y,\eta,t)|^2 \sum_{I\in D_j: C^{-1}t \le |I| \le Ct \,,\, J\subset I} 1_{I}(y) 1_I(u) \, dy d\eta \frac{dt}t \Bigg)^{1/2} .$$
Note that $\alpha(u)$ is constant over $u\in J$, which we now refer to as $\alpha_J$ (despite its dependence on $j$). 
For each $u\in J$,
\begin{align*}
    |S_j \big(P(f)\big)(u) - \alpha_J| &\le \Bigg(\sum_{k: J_k}  \int_{T^*_k} |P(f)(y,\eta,t)|^2 \sum_{I\in D_j: C^{-1}t \le |I| \le Ct \,,\, I\subset J} 1_{I}(y) 1_I(u) \, dy d\eta \frac{dt}t \Bigg)^{1/2} \\
        &\lesssim  \Bigg(\sum_{k: I_k}  \int_{T^*_k} |P(f)(y,\eta,t)|^2   1_{|y-u| = O( t)} 1_{t=O(|J|)} \, dy d\eta \frac{dt}t \Bigg)^{1/2}
\end{align*} 
thus using Lebesgue theory we have
$$\frac{1}{|J|} \int_J |S_j\big(P(f)\big)(u)-\alpha_J| \lesssim \Bigg(\frac 1 {|J|} \sum_{k: J_k} \int_{T^*_k} |P(f)(y,\eta,t)|^2 1_{(y-t,y+t)\subset CJ} \, dy d\eta dt \Bigg)^{1/2}$$
where $C>0$ is sufficiently large. Recall $(y-t,y+t)\subset CJ$ is equivalent to $0<t\le C|J|/2$ and $|y-c_J| < C|J|/2 - t$. 
By Remark~\ref{r.obs_enlarge}, $T^*_k \cap \{(y-t,y+t)\subset CJ\}$ is a subset of  tent $T'_k$ with top interval $I_k' =I_k \cap CJ$. 
Note the collection of partial tents  $T^*_k \cap \{(y-t,y+t)\subset CJ\}$ in $T'_k$  is still well separated with same separation constants as Claim~\ref{cl.SAlg_L2_WellSep}.

Let $f_{J} = f {\chi}_{CJ}^N$ for some large constant $N$. Using Lemma~\ref{l.wellsep2} with $|P(f)(y,\eta,t)| = |\< f_{J}, \chi_{CJ}^{-N}\phi_{y,\eta,t}\>|$ and $|P(f)(y,\eta,t)| \le \lambda$ for all points in $T^*_k \cap \{(y-t,y+t)\subset CJ\}$, we have
    \begin{align*}
        \int_J \big|S_j(P(f))(u)-\alpha_J\big| \, du  &\lesssim \|f_J\|_2 +  \big[\lambda  \|N\|_{L^1(CJ)}^{1/2}\big]^s \|f_J\|_2^{1-s} \\
        &\lesssim |J|^{1/2}\inf_{x\in J} M_2(f)(x) + |J|^{1/2} \lambda^s  \inf_{x\in J} \big[M(N)(x)\big]^{s/2} \inf_{x\in J} \big[M_2(f)(x)\big]^{1-s}
    \end{align*}
and so
\begin{equation}\label{e.sharpL2}
    \big(S_j(P(f))\big)^{\sharp}(x) \lesssim M_2(f)(x) + \lambda^s \big[ M(N)(x)\big]^{s/2} \big[M_2(f)(x)\big]^{1-s}.
\end{equation}
Combining \eqref{e.sharpL2} and the fact $w\in \A_{q/2}$, 
\begin{align*}
    \|S_j (P(f))\|_{L^q(w)} &\lesssim  \|f\|_{L^q(w)} +  \lambda^{s} \|N\|_{L^{q/2}(w)}^{s/2} \|f\|_{L^q(w)}^{1-s} \\
        &\lesssim \|f\|_{L^q(w)} + \lambda^s \|N\|_{\infty}^{(\frac{s}{q})(\frac{q}{2}-1)}   \|N\|_{L^1(w)}^{s/q}   \|f\|_{L^q(w)}^{1-s}
\end{align*}
Summing over $j=0,1,2$ we obtain
$$\lambda \|N\|_{L^1(w)}^{1/q} \lesssim \|f\|_{L^q(w)} + \lambda^s \|N\|_{\infty}^{(\frac{s}{q})(\frac{q}{2}-1)}   \|N\|_{L^1(w)}^{s/q}   \|f\|_{L^q(w)}^{1-s}$$
and we get by rearrangement
$$\lambda \|N\|_{L^1(w)}^{1/q} \lesssim  \|N\|_{\infty}^{\frac{s}{1-s}(\frac 12 -\frac 1 q)}\|f\|_{L^q(w)}.$$
We obtain the desired result for the $n=0$ case by choosing $s \in (0,1)$ sufficiently small. For the case $n \ge 1$, consider the mollified wave packet $\widetilde{\phi}_{y,\eta,t} = {\chi}_I^{-n} \phi_{y,\eta,t}$ and apply work above to $P(f)(y,\eta,t) = \< f {\chi}_I^n \, , \widetilde{\phi}_{y,\eta,t}\>$. 
\endproof

\bibliography{yendo}{}
\bibliographystyle{abbrv}
\end{document}